\newtheorem{theorem}{Theorem}[section]
\newtheorem{lemma}[theorem]{Lemma}
\newtheorem{proposition}[theorem]{Proposition}
\newtheorem{corollary}[theorem]{Corollary}
\theoremstyle{definition}
\newtheorem{definition}[theorem]{Definition}
\newtheorem{example}[theorem]{Example}
\newtheorem{conjecture}[theorem]{Conjecture}
\newtheorem{remark}[theorem]{Remark}
\newtheorem *{Theorem A}{Theorem A}
\newtheorem *{Corollary B}{Corollary B}
\newtheorem *{Corollary C}{Corollary C}
\newtheorem *{Theorem B}{Theorem B}
\newtheorem *{Projective Schur's Lemma}{Projective Schur's Lemma}
\newtheorem *{Graded Artin Wedderburn Theorem}{Graded Artin Wedderburn Theorem}
\newtheorem *{Higgs' Conjecture}{Higgs' Conjecture}
\newtheorem *{Theorem C}{Theorem C}
\newtheorem *{Theorem D}{Theorem D}
\newtheorem *{Theorem D0}{Theorem D}
\newtheorem *{Theorem E}{Theorem E}
\newtheorem *{Theorem F}{Theorem F}
\newtheorem *{Theorem G}{Theorem G}
\newtheorem *{Theorem H}{Theorem H}
\newtheorem *{Corollary I}{Corollary I}
\newtheorem *{Theorem J}{Theorem J}
\newtheorem *{Corollary K}{Corollary K}
\newtheorem *{Theorem L}{Theorem L}
\newtheorem *{Question1}{Question 1}
\newtheorem *{Question2}{Question 2}
\newtheorem *{Problem 1}{Problem 1}
\newtheorem *{Problem 2}{Problem 2}
\newtheorem *{Problem 3}{Problem 3}
\newcommand{\C}{{\mathbb C}}
\newcommand{\N}{{\mathbb N}}
\numberwithin{equation}{section}
\begin{document}
\title[Maximal Connected Gradings and Intrinsic Fundamental Groups]{
Groups of central type, maximal Connected Gradings and Intrinsic Fundamental Groups of Complex Semisimple
Algebras}
\author{Yuval Ginosar}

\address{Department of Mathematics, University of Haifa, Haifa 3498838, Israel}
\email{ginosar@math.haifa.ac.il}
\author{Ofir Schnabel}
\thanks{The second author was supported by Minerva Stiftung}
\address{Institute of Algebra and Number Theory, Pfaffenwaldring 57, University of Stuttgart, Stuttgart 70569, Germany}
\email{os2519@yahoo.com}

\begin{abstract}
Maximal connected grading classes of a finite-dimensional algebra $A$
are in one-to-one correspondence with Galois covering classes of $A$ which admit no proper Galois covering
and therefore
are key in computing the intrinsic fundamental group
$\pi_1(A)$. Our first concern here is the algebras
$A=M_n(\mathbb{C})$. Their maximal connected
gradings turn out to be in one-to-one correspondence with the
Aut$(G)$-orbits of non-degenerate classes in $H^2(G,\C^*)$, where
$G$ runs over all groups of central type whose orders divide $n^2$.
We show that there exist groups of central type $G$ such that $H^2(G,\C^*)$ admits
more than one such orbit of non-degenerate classes.
We compute the family $\Lambda$ of positive integers $n$ such that there is a unique group of central
type of order $n^2$, namely $C_n\times C_n$. The family $\Lambda$ is of square-free integers and contains all
prime numbers. It is obtained by a full description of all groups of central type whose orders are cube-free.
We establish the maximal connected gradings of all
finite dimensional semisimple complex algebras using the fact that such gradings are determined by
dimensions of complex projective representations of finite groups.
In some cases we give a description of the corresponding fundamental groups.
\end{abstract}

\maketitle
{\small 2010 Mathematics Subject Classification: 16W50, 16S25, 20C25.}
\date{\today\vspace{-0.5cm}}
\tableofcontents

\bibliographystyle{abbrv}
\tikzset{node distance=3cm, auto}

\section{Introduction}\label{intro}\pagenumbering{arabic} \setcounter{page}{2}
A novel definition of the fundamental group $\pi_1(\mathcal{B})$ of a linear category $\mathcal{B}$ was suggested by
 C. Cibils, M.J. Redondo and A. Solotar \cite{cibilsintrinsic}. It is intrinsic in the sense that
``it does not depend on the presentation of the category by generators and relations".
Other notions of fundamental groups
are shown to be homomorphic images of this intrinsic group in case a universal covering exists.
In \cite{cibils2010},
the very same authors establish $\pi_1(A)$ for certain families of algebras $A$, considered
as 1-object linear categories,
essentially by computing the inverse limit of a diagram $\Delta(A)$
of surjective morphisms of the automorphism groups that are associated to the Galois coverings
of these categories. It is thus necessary to understand such coverings which admit no proper Galois covering,
alternately, the coverings whose corresponding grading classes are maximal with respect to the quotient partial ordering (see Definition \ref{scdiagdef}(2)).
In particular, a finite-dimensional algebra (as well as other linear categories) has a universal covering only if it admits a unique maximal connected
grading up to equivalence.
This enables Cibils, Redondo and Solotar to show that complex matrix algebras $M_n(\C)$
do not have universal coverings for $n>1$, by introducing two maximal connected matrix gradings of
totally different natures:
by the free group $\mathcal{F}_{n-1}$ and by the finite abelian group $C_n\times C_n$.

Recall that a {\it grading} of an algebra $A$ by a group $G$ is a vector space decomposition
\begin{equation}\label{eq:algebragrading}
A=\bigoplus _{g\in G} A_g
\end{equation}
such that $A_gA_h\subseteq A_{gh}$. The {\it support} of this grading is
$$\text{supp}_G(A)=\{g\in G|\text{dim}(A_g)\geq 1\}.$$
A $G$-grading of an algebra $A$ is {\it connected} if the support of this grading generates the group $G$.

The following example of a grading of $A=M_d(\mathbb{C})$
by the group $C_d\times C_d=\langle \sigma \rangle\times \langle \tau \rangle$ accompanies us throughout
this note.
\begin{example}\label{example}(see \cite[Proposition 4.4]{cibils2010},\cite[Example 0.4]{EK13}, \cite{HPP98})
Let $\xi _d$ be a $d$-{th} primitive root of unity.
Then the component of $A=M_d(\mathbb{C})$ that corresponds to $\sigma^i\tau^j\in C_d\times C_d$
is 1-dimensional and is given by $A_{\sigma^i\tau^j}=$ span$_{\C}\{B_{\sigma}^iB_{\tau}^j\}$, where
\begin{equation}\label{eq:twist}
\begin{array}{cc}
B_{\sigma}=
\left(
\begin{array}{ccccc}
0 & 0 & \cdots & 0 & 1 \\
1 & 0 & \cdots & 0 & 0 \\
\vdots & & \ddots &  &\vdots \\
0 & 0 & \cdots & 1 & 0
\end{array} \right),
& B_{\tau}=
\left(
\begin{array}{ccccc}
1  & 0 & 0 & \cdots & 0 \\
0 & \xi _d & 0 &\cdots & 0 \\
\vdots & &  & \ddots &\vdots \\
0 & 0 & 0 & \cdots  & \xi _d^{d-1}
\end{array} \right).
\end{array}
\end{equation}
\end{example}
Example \ref{example} is a prototype for a natural grading of a twisted group algebra
hereby explained.

Let $G$ be a group.
A {\it twisted group algebra} $\C^fG$ is a $G$-dimensional $\C$-vector space
endowed with multiplication, which is defined by a $G$-indexed basis $\{u_{g}\}_{g\in G}$
\begin{equation}\label{twistmult}
u_{g_1}u_{g_2}=f(g_1,g_2)u_{g_1g_2},\ \ g_1,g_2\in G,
\end{equation}
where $f\in Z^2(G,\C^*)$, that is a 2-cocycle.
A generalized Maschke's theorem says that if $G$ is finite then any twisted group algebra $\C^fG$ is semisimple. It affords a decomposition
\begin{equation}\label{AW}
\C^fG\cong\bigoplus_{[W]\in\text{Irr}(G,f)}\text{End}_{\C}(W),
\end{equation}
where Irr$(G,f)$ denotes the equivalence classes of irreducible complex $f$-projective representations of $G$ (see \cite[\S 3]{karpilovsky}).
Any twisted group algebra $\C^fG$ admits a natural $G$-grading by letting
$$(\C^fG)_g:=\text{span}_{\C}\{u_g\},\ \ \forall g\in G.$$
Such gradings are maximal connected by a similar proof to \cite[Proposition 5.1]{cibils2010}. Furthermore,
a theorem due to Y.A. Bahturin, S.K. Sehgal and M.V. Zaicev \cite{MR2488221} says that any group grading
on a complex semisimple algebra is obtained from twisted group algebras gradings by direct sums and inductions
(see Theorem \ref{th:BSZ}).

One can think of Example \ref{example} as a twisted group algebra $\C^f(C_d\times C_d)$, where the 2-cocycle $f\in Z^2(C_d\times C_d,\C^*)$ is obtained by the relations \eqref{twistmult}
when choosing basis elements
$0\neq u_g\in A_g$ for every $g\in C_d\times C_d$.
This example demonstrates that unlike group algebras,
there exist non-trivial groups $G$ and cocycles $f\in Z^2(G,\C^*)$ such that the twisted group algebra $\C^fG$ is itself a
matrix algebra, that is $|$Irr$(G,f)|=1$ in \eqref{AW}.
A group $G$ admitting such a phenomenon is called of {\it central
type}, and the 2-cocycle $f\in Z^2(G,\C^*)$ is called
{\it non-degenerate}.
By the definition it is clear that groups of central type are of square orders. Moreover, \eqref{AW} clearly yields
\begin{lemma}\label{dimprojrep}(see \cite[\S 1]{ShSh})
A cocycle $f\in Z^2(G,\C^*)$ is non-degenerate if and only if there exists $[W]\in\text{Irr}(G,f)$ with
$\dim_{\C}W=\sqrt{|G|}$.
\end{lemma}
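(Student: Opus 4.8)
The plan is to derive the statement purely from the Artin--Wedderburn decomposition \eqref{AW} by a dimension count. First I would observe that $\C^fG$ has dimension $|G|$ over $\C$, since by construction it is a $G$-dimensional vector space with basis $\{u_g\}_{g\in G}$. Taking $\C$-dimensions on both sides of \eqref{AW} and using $\dim_\C\text{End}_\C(W)=(\dim_\C W)^2$ then yields the key identity
\begin{equation*}
|G|=\sum_{[W]\in\text{Irr}(G,f)}(\dim_\C W)^2,
\end{equation*}
which governs both implications.

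From here the forward direction is immediate: if $f$ is non-degenerate, then by definition $|\text{Irr}(G,f)|=1$, so the sum reduces to a single term $(\dim_\C W)^2=|G|$, and taking the positive square root gives $\dim_\C W=\sqrt{|G|}$. For the converse I would argue that if some $[W]$ already satisfies $\dim_\C W=\sqrt{|G|}$, then its own summand equals $(\dim_\C W)^2=|G|$, exhausting the entire sum. Since each remaining summand $(\dim_\C W')^2$ is a positive integer, no other irreducible class can occur, forcing $|\text{Irr}(G,f)|=1$; that is, $\C^fG$ is a matrix algebra and $f$ is non-degenerate.

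I expect no genuine obstacle here, as the whole argument rests on the single displayed identity. The only point meriting care is the observation that each term $(\dim_\C W')^2$ is a strictly positive integer, which is precisely what prevents a second nonzero summand once one term reaches the full value $|G|$. As a byproduct this also explains why groups of central type necessarily have square order.
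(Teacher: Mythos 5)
Your argument is correct and is exactly the one the paper intends: the paper offers no separate proof, stating only that the decomposition \eqref{AW} "clearly yields" the lemma, and the content behind that remark is precisely your dimension count $|G|=\sum_{[W]}(\dim_\C W)^2$ together with the observation that a single summand of size $|G|$ forces $|\text{Irr}(G,f)|=1$. Nothing further is needed.
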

Non-degeneracy is a cohomology class
property, hence we may refer to non-degenerate cohomology classes
$[f]\in H^2(G,\C^*)$. Furthermore, non-degeneracy is preserved
under a broader setting, that is under the following natural action of Aut$(G)$ on
$H^2(G,\C^*).$

Let $[f]\in H^2(G,\C^*)$ and let $\phi\in$Aut$(G)$.
Then
\begin{equation}\label{eq:action}
\begin{aligned}
\phi([f])&=[{\phi}(f)],\quad \text{ where}\\
{\phi}(f)(g_1,g_2):&=f({\phi}^{-1}(g_1),{\phi}^{-1}(g_2)).
\end{aligned}
\end{equation}
It is not hard to verify (see Proposition \ref{stable}) that the subset of
non-degenerate cohomology classes in $ H^2(G,\mathbb{C}^*)$ is
stable under this action.

Note that the $d$-th primitive root $\xi _d$ in Example
\ref{example} was not chosen explicitly. Different choices of
primitive roots yield non-cohomologous $2$-cocycles. However,
their corresponding cohomology classes lie in the same
Aut$(C_d\times C_d)$-orbit under the action \eqref{eq:action}
(see equation ~\eqref{eq:cdcd}).

This note is motivated by the study of the intrinsic fundamental groups of artinian semisimple complex algebras, regarded
as 1-object linear categories.
In this case, and more generally for every finite-dimensional algebra $A$, the set of equivalence classes of the connected gradings of $A$ (see Definition \ref{greqdef})
is partially ordered (see Proposition \ref{poset}).

We begin with algebras of complex matrices.
The grading classes of such algebras which are maximal with respect to this order are described by A. Elduque and
M. Kochetov \cite{EK13}, we formulate it in Theorem \ref{A} herein using a slightly different terminology (see \S\ref{terminology}).
This theorem, which is a consequence of
the above result of Bahturin, Sehgal and Zaicev, exhausts the maximal connected
gradings of these algebras up to equivalence in terms of orbits of non-degenerate cohomology classes under the action \eqref{eq:action}.
\begin{theorem}\label{A}(see \cite[Proposition 2.31 and Corollary 2.34]{EK13})\label{th:1-1}
There is a one-to-one correspondence between the maximal connected
grading classes of $M_n(\mathbb{C})$, and the set of pairs
$\mathcal{P}_n=\{(G,\gamma)\}$, where $G$ are groups of central
type of order dividing $n^2$, and $\gamma$ are Aut$(G)$-orbits of
non-degenerate cohomology classes in $H^2(G,\mathbb{C}^*)$.
\end{theorem}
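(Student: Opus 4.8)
The plan is to establish the correspondence of Theorem~\ref{A} by unwinding the classification of group gradings on complex semisimple algebras (the Bahturin--Sehgal--Zaicev theorem, Theorem~\ref{th:BSZ}) in the special case $A=M_n(\mathbb{C})$, and then translating the notion of a \emph{maximal} connected grading into the combinatorial data $(G,\gamma)$. First I would observe that, since $M_n(\mathbb{C})$ is simple, any $G$-grading making it graded-semisimple as an algebra is, by the structure theorem, induced from a twisted group algebra grading: there is a finite group $G$ with a cocycle $f\in Z^2(G,\mathbb{C}^*)$ and a graded isomorphism exhibiting $M_n(\mathbb{C})$ as a matrix algebra over $\mathbb{C}^fG$, so that the support generates $G$ precisely when the grading is connected. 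The key first reduction is therefore that a \emph{connected} grading of $M_n(\mathbb{C})$ corresponds to the data of a finite group $G$ together with a cocycle, up to the natural graded equivalences.

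The second step is to characterize maximality. A connected $G$-grading is maximal in the quotient partial order exactly when it cannot be refined by a larger grading group, which on the level of the structure theorem means the twisted group algebra factor $\mathbb{C}^fG$ must itself be \emph{all} of a matrix algebra with no residual multiplicity to distribute—that is, $\mathbb{C}^fG\cong M_{\sqrt{|G|}}(\mathbb{C})$, forcing $G$ to be of central type and $f$ to be non-degenerate in the sense of Lemma~\ref{dimprojrep}. Concretely, I would argue that if the grading were induced with nontrivial multiplicity (i.e.\ $M_n(\mathbb{C})=M_m(\mathbb{C}^fG)$ with $m>1$), then one could properly refine it, contradicting maximality; conversely the natural grading on a twisted group algebra $\mathbb{C}^fG$ with $f$ non-degenerate is maximal connected by the cited analogue of \cite[Proposition 5.1]{cibils2010}. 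Since groups of central type have square order, writing $|G|=\sqrt{|G|}^{\,2}$ and noting $n=m\sqrt{|G|}$ shows $|G|$ divides $n^2$, matching the order constraint in $\mathcal{P}_n$.

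The third step is to pin down the correct equivalence relation so that the correspondence becomes a genuine bijection rather than a surjection. Two such gradings give equivalent grading classes precisely when there is a graded algebra isomorphism compatible with a group isomorphism identifying the two supports; tracing this through the cocycle data shows that the cocycle $f$ is only well-defined up to coboundary (since basis elements $u_g$ may be rescaled) and up to the action of $\mathrm{Aut}(G)$ described in \eqref{eq:action} (since the identification of $G$ with the support is not canonical). Hence the invariant attached to a maximal connected grading class is exactly an $\mathrm{Aut}(G)$-orbit $\gamma$ of non-degenerate classes in $H^2(G,\mathbb{C}^*)$, and Proposition~\ref{stable} guarantees that non-degeneracy is a property of the whole orbit so that the data $(G,\gamma)\in\mathcal{P}_n$ is well-posed. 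Injectivity then amounts to reconstructing the graded algebra from $(G,\gamma)$ uniquely up to equivalence, which follows from the graded Artin--Wedderburn description in \eqref{AW}.

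The main obstacle I expect is in the equivalence-relation bookkeeping of the third step: one must verify carefully that two a priori different-looking maximal connected gradings with the same $(G,\gamma)$ really are equivalent, and conversely that the $\mathrm{Aut}(G)$-action captures \emph{all} the ambiguity and nothing more. The subtlety is that a graded equivalence is allowed to compose a diagonal rescaling of the homogeneous basis (which changes $f$ within its cohomology class) with a relabeling of the grading group (which is where $\mathrm{Aut}(G)$ enters), and one must check these two sources of ambiguity are exactly $\mathrm{coboundaries}$ and $\mathrm{Aut}(G)$ respectively, with no further identifications forced by the simplicity of $M_n(\mathbb{C})$. Since this is precisely the content of \cite[Proposition 2.31 and Corollary 2.34]{EK13}, I would cite those results for the delicate equivalence computation and present the argument above as the conceptual route, taking care only to reconcile the terminology of grading classes and the quotient order (see \S\ref{terminology}) with the formulation in \cite{EK13}.
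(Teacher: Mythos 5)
Your second step contains a genuine error that breaks the whole correspondence. You claim that if the grading is induced with nontrivial multiplicity, i.e.\ $M_n(\mathbb{C})=M_m(\mathbb{C}^fG)$ with $m>1$, then it can be properly refined and hence is not maximal; this would force $|G|=n^2$ for every maximal connected class, whereas the theorem asserts a bijection with pairs $(G,\gamma)$ where $|G|$ is any square \emph{dividing} $n^2$. (You implicitly contradict yourself a few lines later when you invoke $n=m\sqrt{|G|}$ to explain the divisibility condition, which has content only if $m>1$ is allowed.) The claim is simply false: the elementary grading of $M_n(\mathbb{C})$ by the free group $\mathcal{F}_{n-1}$ --- the case $G=\{e\}$, $m=n$ --- is maximal connected (Corollary \ref{th:elementary}), and more generally, for each divisor $d$ of $n$ and each central-type group $G$ of order $d^2$ with non-degenerate $[f]$, the class \eqref{scgr} graded by the free product $\mathcal{F}_{n/d-1}*G$ and induced from $\mathbb{C}^fG$ by $1+\sum x_i$ (the $x_i$ free generators) is maximal connected; this is Lemma \ref{xcfg}, whose proof uses the freeness of the $x_i$ in an essential way. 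Your proposal never mentions the free product $\mathcal{F}_{m-1}*G$, which is the group actually doing the grading in every maximal class with $m>1$, so the correspondence you construct accounts only for the pairs with $|G|=n^2$ and cannot be the asserted bijection with $\mathcal{P}_n$.

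For comparison, the paper's route is: to $(G,\gamma)$ with $|G|=d^2$ assign the $\mathcal{F}_{n/d-1}*G$-grading class \eqref{scgr}; maximality of these classes is Lemma \ref{xcfg}, the fact that every connected class of $M_n(\mathbb{C})$ is a quotient of one of them is Lemma \ref{scgr1} (resting on Theorem \ref{th:BSZ}), and injectivity on pairs is extracted from Theorem \ref{th:AHequi} together with Lemma \ref{hatvstilde} (the normalizer of $G$ in $\mathcal{F}_{m-1}*G$ is $G$ itself). Your third step --- identifying coboundaries and the $\mathrm{Aut}(G)$-action as exactly the ambiguity in the cocycle --- is fine as far as it goes, but it only treats the sub-case $m=1$; for $m>1$ one must additionally control the inducing element $x$ up to the right-$G$-coset relation of Definition \ref{cosetequiv}, which is where Theorem \ref{th:AHequi} and Lemma \ref{hatvstilde} enter.
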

If $G$ is a group of central type of order $d^2$ dividing $n^2$ and $\gamma$ is an orbit as above,
then the maximal connected class that corresponds to $(G,\gamma)\in\mathcal{P}_n$
is graded by the free product $\mathcal{F}_{\frac{n}{d}-1}*G$.
The correspondence in Theorem \ref{A} is described in \S\ref{gsa}.

By Theorem \ref{A}, every divisor $d$ of $n$ contributes at least one member to $\mathcal{P}_n$ by
a non-degenerate class of $C_d\times C_d$ as in Example \ref{example}.
That is, the number of non-equivalent maximal connected grading classes of $M_n(\mathbb{C})$
is bounded below by the number of divisors of $n$. In particular, if $n>1$ then $M_n(\mathbb{C})$ admits at least two graded-equivalence classes, and hence there is a
universal covering of $M_n(\mathbb{C})$ only if $n=1$ as observed
in \cite{cibils2010}.

Theorem \ref{A} poses two immediate problems. We record the first one as
\begin{Problem 1}
For a given $n$, classify the groups of central type of order $n^2$.
\end{Problem 1}
Problem 1 is fairly hard in its full generality.
Yet, we are able to give an answer to this problem for ``most" of the positive integers
(in terms of density, see Remark \ref{density}), namely those that are square-free. For any such number $n$, groups of central type of order $n^2$
have a nice description:

\begin{Theorem A}
Let $G$ be a group of order $n^2$, where $n$ is square-free.
Then $G$ is of central type if and only if $n$ decomposes as $n=m\cdot k$
(in particular $(m,k)=1$) such that $G^{\shortmid}\cong C_m\times C_m$, and
$$G\cong(C_m\times C_m)\rtimes (C_k\times C_k),$$
where the action of $C_k\times C_k$ on $C_m\times C_m$ is via SL$_2(m)$.
\end{Theorem A}
By Example \ref{example}, for every $n$ there is at least one group
of central type of order $n^2$, namely $C_n\times C_n$.
Theorem A, which is proven in \S\ref{Proof of Theorem A}, yields the following description of the family of positive integers
$$\Lambda:=\{n\in \N|\ \ C_n\times C_n\text{ is the \textit{only} group of central type of order }n^2\}.$$
\begin{Corollary B}
Let $n=\prod _{i=1}^rp_i^{k_i}$, where $\{p_i\}_{i=1}^r$ are distinct primes. Then $C_n\times C_n$ is the unique
isomorphism type of groups of central type of order $n^2$
if and only if $n$ is square-free ($k_i=1$) and
\begin{equation}\label{legsymb}
p_j\not \equiv \pm 1(\text{mod } p_i),\quad\forall 1\leq i,j\leq r.
\end{equation}
\end{Corollary B}
A proof of Corollary B can be found in \S\ref{Proof of Corollary B}.

As mentioned above, all the non-degenerate cohomology classes in
$H^2(C_d\times C_d,\C^*)$ lie in the same Aut$(C_d\times C_d)$-orbit. It is not hard to verify (see \S\ref{perct}) that the
subgroups of $C_n\times C_n$ of central type are exactly
$C_d\times C_d<C_n\times C_n$, for all the divisors $d$ of $n$.
Thus, if $n$ is in the family $\Lambda$ as described in Corollary B, then
$\mathcal{P}_n$ in Theorem \ref{A} consists of the unique Aut$(C_d\times
C_d)$-orbit of non-degenerate classes for every divisor $d$ of
$n$. Consequently, if $n=\prod _{i=1}^rp_i\in \Lambda$, then the cardinality of $\mathcal{P}_n$ is
equal to the number of divisors of $n$, which is $2^r$
($|\mathcal{P}_n|$ is strictly larger than the number of divisors
of $n$ if $n$ is not in $\Lambda$).

The second problem that arises from Theorem \ref{A} is recorded as
\begin{Problem 2}
Given a group $G$ of central type, classify the Aut$(G)$-orbits of
the non-degenerate cohomology classes in $H^2(G,\mathbb{C}^*)$.
\end{Problem 2}
In their paper \cite{aljadeff}, E. Aljadeff, D. Haile and M. Natapov present a family of groups of central
type, which contains all the abelian groups of central type. This family enjoys the property that for any
group $G$ in it, Aut$(G)$ acts \textit{transitively} on the
non-degenerate cohomology classes in $H^2(G,\mathbb{C}^*)$ \cite[Theorem 18]{aljadeff2}. One might conjecture that this transitivity property holds for
every group of central type. However, groups of central type with
a non-transitive action of Aut$(G)$ on their non-degenerate
cohomology classes do exist.
In \S\ref{non-transitive} we exhibit such groups.
Example \ref{example0} makes use of the structure of groups of central type of cube-free order given in Theorem A. It shows that such groups of central type may admit
non-degenerate classes in separate orbits under the corresponding automorphism action.
Nevertheless, such classes show similar behavior. More precisely, they belong to the same orbit under the more general action \eqref{genaction}.
In the next two examples, the non-degenerate cohomology classes are not in the same orbit under both actions and behave remarkably different.
Example \ref{example1} is of a family of groups of central type whose Sylow subgroups are all abelian.
Example \ref{example2} is a $2$-group of central type of order $256$.
It is worthwhile to point out that due to this non-transitivity of the automorphism action on the non-degenerate classes, a general
group-theoretic notion of symplectic groups should be assigned to
a non-degenerate form rather than to the group of central type admitting it (see Definition \ref{symplecticdef}).
With the concept of symplectic actions we obtain a closure property of groups of central type to certain extensions of
Hall subgroups and describe groups of central type with abelian $p$-Sylow subgroups.

One may aim at pushing the investigation of maximal connected grading classes further to general finite-dimensional semisimple complex algebras.
Any such algebra with Artin-Wedderburn decomposition
\begin{equation}\label{eq:kraoto3}
A=\bigoplus_nM_n(\C)^{b(n)}
\end{equation}
($b_n$ denotes the multiplicity of $M_n(\C)$ in this decomposition) can be recognized by the characteristic function
\begin{equation}\label{chiA}
\chi_A(z):=\sum_nb(n)n^{-z}.
\end{equation}
For twisted group algebras $\C^fG$, such functions encrypt dimensions of irreducible $f$-projective representations.
More precisely, to any $f\in Z^2(G,\C^*)$ yielding a decomposition
\eqref{AW}
one assigns the {\it $f$-projective zeta function}
\begin{equation}\label{projzeta}
\zeta_f(z):=\chi_{\C^fG}(z)=\sum_{[W]\in\text {Irr}(G,f)}(\dim(W))^{-z}.
\end{equation}
It is not hard to verify that cohomologous cocycles admit the same characteristic function \eqref{projzeta}.
Moreover, if $[f_1],[f_2]\in H^2(G,\C^*)$ are in the same Aut$(G)$-orbit $\gamma$, then $\zeta_{f_1}(z)=\zeta_{f_2}(z)$. The projective zeta function may therefore
be assigned also to an orbit, that is $\zeta_{\gamma}(z)$.

In order to classify the maximal connected grading class of semisimple algebras, define a formal term $\gamma^m$
for every Aut$(G)$-orbit $\gamma$ of cohomology classes in $H^2(G,\C^*)$ and every positive integer $m$. Regard $\gamma^m$ as {\it trivial} if $G$ is the one-element group and $m=1$.
A formal sum $\sum a(m,\gamma)\gamma^{m}$ with non-negative integer coefficients $a(m,\gamma)$ may contain orbits $\gamma$ in the cohomologies of several groups $G$.
\begin{Theorem C}
With the above notation, there is a one-to-one correspondence between the maximal connected
grading classes of a finite-dimensional complex semisimple algebra $A$, and the formal sums $\sum a(m,\gamma)\gamma^{m}$ with non-negative integer coefficients $a(m,\gamma)$
such that
$$
\chi_A(z)=\sum a(m,\gamma)m^{-z}\zeta_{\gamma}(z),$$ and such that $a(m,\gamma)\leq 1$ if $\gamma^m$ is trivial.
\end{Theorem C}
Theorem C is proven in \S\ref{proofD}.

The intrinsic fundamental group $\pi_1(\mathcal{B})$ is the inverse limit of a diagram $\Delta(\mathcal{B})$
of groups which grade the linear category $\mathcal{B}$
in a connected way (see \S\ref{fundint}).
In practice, in order to compute $\pi_1(A)$ of a finite-dimensional algebra $A$ it is enough to consider the maximal connected grading classes
of $A$ as well as their common quotients (Definition~\ref{def:quoalg}).
A forthcoming paper exploits the above study of maximal connecting grading classes
for estimating both $\pi _1 (M_n(\mathbb{C}))$, where $n$ is an integer satisfying the conditions in Corollary B, as well as $\pi _1 (M_p(\mathbb{C})\oplus M_p(\mathbb{C}))$
for primes $p$.

The intrinsic fundamental group of an artinian semisimple algebra may sometimes be isomorphic to the free product of
the intrinsic fundamental groups of its simple summands as shown in the following claim.
\begin{Theorem D}
Let $n_1,\cdots, n_s$ be distinct positive integers such that every prime $p$ is a factor of no more than two of them.
Then
$$\pi _1 (\bigoplus_{i=1}^k M_{n_i}(\mathbb{C}))=\coprod_{i=1}^k\pi _1 (M_{n_i}(\mathbb{C})).$$
\end{Theorem D}
Theorem D is proven in \S\ref{cgda}.
Special instances of this theorem are when the dimensions of the simple summands of a semisimple algebra are distinct and pairwise coprime.
Other instances of Theorem D are the bisimple algebras $M_{n_1}(\C)\oplus M_{n_2}(\C)$ with the extra condition $n_1\neq n_2$.
The maximal connected gradings of any bisimple algebra are described in \S\ref{bisimpsec}. This section also contains some results about groups of ``double central type",
which are main players in the description
of bisimple algebras.

{\bf Acknowledgements.}
We are grateful to F. Ced\'{o} and to M. Kochetov for their important comments, which helped us improve this note.
M. Kochetov also referred us to bibliography that we were not aware of.
We also thank D. Blanc and A. Solotar for valuable discussions.

\section{Group grading of vector spaces and algebras}
This section reviews notions and results about gradings that will be needed in the sequel.
Throughout this note, all vector spaces and algebras are finite-dimensional over $\C$, even though
these conditions are sometimes redundant.
The reader is referred to \cite{EK13} for an extensive account on grading for general spaces and algebras (not necessarily
associative).
The terminologies used in \cite{EK13} and herein do not always coincide, see \S\ref{terminology}.
\subsection{Graded vector spaces}\label{cps}
A grading of a vector space $V$ by a group $G$
is a decomposition of $V$ to subspaces indexed by the group elements
\begin{equation}\label{gradvecsp}
\mathcal{G}:V=\oplus _{g\in G} V_g.
\end{equation}
A subspace $U\subseteq V$  is said to be a {\it graded-subspace} if $$U=\oplus _{g\in G} (U\cap V_g).$$

The subspaces $V_g$ are the {\it homogeneous components} of $V$ and the vectors
$v\in V_g$ are called {\it homogeneous}.
A {\it graded basis} of $V$ is a basis consisting of homogeneous elements.

If \eqref{gradvecsp} is connected, i.e. its support generates $G$, and $U\subset V$ is a graded-subspace, then certainly its $G$-grading is not necessarily connected.
However, it is connectedly-graded by the subgroup
$$\langle g\in G |\ \  U\cap V_g\neq 0\rangle<G.$$ Focusing on connected gradings, $U$ is regarded as graded by this subgroup.

Let
\begin{equation}\label{2grsp}
\mathcal{G}_V:V=\oplus _{g\in G} V_g\quad , \quad \mathcal{G}_W:W=\oplus _{h\in H} W_h
\end{equation}
be two group-graded spaces. A {\it graded morphism} between these spaces is a pair $(\psi,\phi)$ of a linear transformation $\psi:V\to W$ and a group homomorphism
$\phi: G\to H$
such that $\psi(V_g)\subset W_{\phi(g)}$ for every $g\in G$.
The graded spaces \eqref{2grsp}
are {\it graded equivalent} if there is a graded morphism $(\psi,\phi)$ between them such that $\psi:V\to W$ and $\phi: G\to H$ are isomorphisms
of linear spaces and groups respectively.
These spaces
are {\it graded isomorphic} if, with the above notation, $G=H$ and
$\phi$ is the identity on $G$, that is
$\psi(V_g)=W_g$ for every $g\in G$.

The set $\mathcal{S}_G$ of $G$-graded isomorphism classes of finite-dimensional vector spaces
has a structure of a semiring with involution as follows.
Addition is by direct sum of representatives. Clearly, the zero-dimensional space plays the role of 0 in the
additive monoid $\mathcal{S}_G$.
Next, any two $G$-gradings \eqref{2grsp} of vector spaces
equip their tensor product with a natural convolution $G$-grading
$$\mathcal{G}_V\otimes\mathcal{G}_W:V\otimes W=\oplus_{g\in G}(V\otimes W)_g,$$ where
\begin{equation}\label{eq:tensvec}
(V\otimes W)_g=\text{span}_{\C}\{v\otimes w\in V_{g_1}\otimes W_{g_2} \mid g_1\cdot g_2=g\}.
\end{equation}
A $G$-grading \eqref{gradvecsp} determines a natural $G$-grading on its dual space $V^*=$Hom$(V,\C)$.
$$\mathcal{G}^*:V^*=\oplus _{g\in G} (V^*)_g,$$
 where
\begin{equation}\label{dual}
(V^*)_g:=\{\phi \in V^*\mid\phi|_{V_h}=0\quad \forall h\neq g^{-1}\}.
\end{equation}
The product \eqref{eq:tensvec} and the involution $\mathcal{G}\mapsto \mathcal{G}^*$
are well defined on graded equivalence classes as well as on graded isomorphism classes.
Clearly, the 1-dimensional space graded by the trivial element of $G$ serves as the unit in the semiring with involution
$\mathcal{S}_G$.

The semiring $\mathcal{S}_G$ is a left semimodule over itself in two different ways.
A class $[V]\in$$\mathcal{S}_G$ can act on $[W]\in$$\mathcal{S}_G$ either
by tensoring from the left
$$[W]\mapsto [V\otimes W],$$
or tensoring from the right by the dual
$$[W]\mapsto [W\otimes V^*].$$
We are interested in the combined action of these two. Define the corresponding {\it induced} $G$-graded space
\begin{equation}\label{indef}
\mathcal{G}_V\otimes\mathcal{G}_W\otimes\mathcal{G}_V^*:V\otimes W\otimes V^*=\oplus_{g\in G}(V\otimes W\otimes V^*)_g,
\end{equation}
where by \eqref{eq:tensvec}
$$(V\otimes W\otimes V^*)_g=
\text{span}_{\C}\{v\otimes w\otimes \phi\in V_{g_1}\otimes W_{g_2}\otimes (V^*)_{g_3} \mid g_1\cdot g_2\cdot g_3=g\}.$$
This determines the {\it left diagonal action} of $\mathcal{S}_G$ on itself
\begin{equation}\label{diagdef}
^{[V]}[W]:= [V\otimes W\otimes V^*].
\end{equation}

The semiring $\mathcal{S}_G$ can be identified with the group
semiring $\mathbb{N} G$ of the group $G$ over the nonnegative integers $\mathbb{N}$ as follows.
To any $G$-grading \eqref{gradvecsp} of a finite-dimensional vector space one can assign a character
$$\chi(\mathcal{G}):=\sum _{g\in G} \text{dim}_{\C}(V_g)g \in \mathbb{N}G.$$
The character satisfies
\begin{equation}\label{resprod}
\chi(\mathcal{G}_V\otimes\mathcal{G}_W)=\chi(\mathcal{G}_V)\cdot \chi(\mathcal{G}_W).
\end{equation}
The semiring also has a natural involution $\mathbb{N}G \to  \mathbb{N}G$ given by
$$(\sum _{g\in G} n_gg)^{\star} =\sum _{g\in G} n_gg^{-1}.$$
The following commutation property between the character and the two involutions is immediate.
\begin{equation}\label{invo}
\chi(\mathcal{G}^*)=\chi(\mathcal{G})^{\star}.
\end{equation}
Now, $\chi$ is a class function and can therefore be defined on the corresponding classes in $\mathcal{S}_G$.
Furthermore, it is not hard to check that $\chi:\mathcal{S}_G\to \mathbb{N} G$ is bijective.
By \eqref{resprod} and \eqref{invo} it is an isomorphism of semirings with involution.

When $G$ is finite, a special class $[\mathcal{G}_0]\in$$\mathcal{S}_G$ is singled out.
A $G$-graded vector space \eqref{gradvecsp} is in the class $[\mathcal{G}_0]$ if
dim$_{\C}(V_g)=1$ for every $g\in G$.
It is easily verified that the class $[\mathcal{G}_0]$ is a self-dual central element in $\mathcal{S}_G$,
whose character is given by
$$\chi([\mathcal{G}_0])=\sum_{g\in G}g\in \mathbb{N}G.$$
The class $[\mathcal{G}_0]$ is quasi-idempotent in the sense that its diagonal action on
elements of $\mathbb{N}G$ is the same as multiplying them by $|G|^2$.
Alternatively, the diagonal action of $[\mathcal{G}_0]$ on finite-dimensional $G$-graded
spaces is by duplicating them $|G|^2$ times.

The semiring $\N G$ is key in describing graded spaces and algebras.
The following equivalence relation $R_H$ on $\N G$
generalizes the well known partition of $G$ to $H$-cosets for $H\leq G$. It is recorded here for later use.
\begin{definition}\label{cosetequiv}
Let $G$ be a group and let $H \leq G$. The {\it right $H$-coset} of an element $x=\sum_{i=1}^r g_i\in \N G$
is the set of elements
$$\{\sum_{i=1}^r g_ih_i\}_{h_i\in H}.$$
We say that two elements $x,\tilde{x}\in \N G$ are right $H$-equivalent, and denote
$(x,\tilde{x})\in R_H$ if they
belong to the same right $H$-coset of $\N G$. We denote the equivalence class of $x\in\N G$ by $\bar{x}\in\N G/R_H$.
\end{definition}
We will also need the following immediate extension of group homomorphisms to morphisms of the corresponding semirings.
\begin{lemma}
Let $\phi:G_1\to G_2$ be a group homomorphism, then its extension
\begin{equation}\label{extended}
\begin{array}{rcl}
\bar{\phi}:\N G_1& \to & \N G_2\\
\sum _{g\in G_1} n_gg  & \mapsto & \sum _{g\in G_1} n_g\phi(g)
\end{array}
\end{equation}
is a morphism of semirings.
\end{lemma}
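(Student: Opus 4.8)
The plan is to verify directly that $\bar{\phi}$ respects the two semiring operations and the two distinguished elements, since $\N G_1$ and $\N G_2$ carry coordinatewise addition and convolution multiplication. First I would check additivity. Writing two arbitrary elements $x=\sum_{g\in G_1}n_g g$ and $y=\sum_{g\in G_1}m_g g$, their sum is $\sum_{g\in G_1}(n_g+m_g)g$, and applying the defining formula \eqref{extended} gives $\bar{\phi}(x+y)=\sum_{g\in G_1}(n_g+m_g)\phi(g)=\bar{\phi}(x)+\bar{\phi}(y)$, the last equality being addition in $\N G_2$. The additive identity $0\in\N G_1$ clearly maps to $0\in\N G_2$.

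Next I would verify multiplicativity, which is the only place the hypothesis that $\phi$ is a \emph{group} homomorphism (and not merely a set map) is used. The convolution product in $\N G_1$ is $xy=\sum_{g,h\in G_1}n_g m_h\,(gh)$, so applying $\bar{\phi}$ termwise and using $\phi(gh)=\phi(g)\phi(h)$ yields
\[
\bar{\phi}(xy)=\sum_{g,h\in G_1}n_g m_h\,\phi(g)\phi(h)=\Big(\sum_{g\in G_1}n_g\phi(g)\Big)\Big(\sum_{h\in G_1}m_h\phi(h)\Big)=\bar{\phi}(x)\bar{\phi}(y),
\]
again by the definition of convolution, this time in $\N G_2$. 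That $\bar{\phi}$ sends the multiplicative unit $1_{G_1}\in\N G_1$ to $1_{G_2}\in\N G_2$ is immediate from $\phi(1_{G_1})=1_{G_2}$.

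The one point requiring a little care — and the closest thing to an obstacle in an otherwise routine verification — is that $\phi$ need not be injective, so distinct elements of $G_1$ may share an image in $G_2$. I would therefore stress at the outset that the right-hand side of \eqref{extended} is to be read in $\N G_2$ after collecting coefficients of equal images, i.e.\ the coefficient of $k\in G_2$ in $\bar{\phi}(\sum_g n_g g)$ is $\sum_{\phi(g)=k}n_g$. With this reading both displayed computations are literal identities in $\N G_2$, since collecting terms is precisely the addition of the target semiring and commutes with the rearrangements above. If desired, compatibility with the involution $\sum_g n_g g\mapsto\sum_g n_g g^{-1}$ follows in the same manner from $\phi(g)^{-1}=\phi(g^{-1})$, giving $\bar{\phi}(x^{\star})=\bar{\phi}(x)^{\star}$, though this is not needed for the stated conclusion.
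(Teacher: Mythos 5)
Your verification is correct and is exactly the routine computation the paper has in mind (the paper states this lemma without proof, calling it an immediate extension). Your remark about collecting coefficients when $\phi$ is not injective is a sensible precaution, and the check of multiplicativity via $\phi(gh)=\phi(g)\phi(h)$ is the only substantive point, which you handle properly.
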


\subsection{Graded algebras}
A {\it graded-morphism} between two graded algebras
\begin{equation}\label{eq:equivgr}
\mathcal{G}_A:A=\bigoplus _{g\in G} A_g,\quad \mathcal{G}_B:B=\bigoplus _{h\in H} B_h,
\end{equation}
is a pair $(\psi,\phi)$, where $\psi:A\rightarrow B$ is an algebra homomorphism and $\phi:G\rightarrow H$
is a group homomorphism such that $\psi(A_g)\subset B_{\phi(g)}$ for any $g\in G$.
If both $\psi$ and $\phi$ are injective, we can regard $A$ as a {\it graded subalgebra} of $B$.

The notions of isomorphism and equivalence of graded vector spaces extend to graded algebras.
\begin{definition}\label{greqdef}
\cite[P. 685, Remark]{BSZ}
A graded-morphism $(\psi,\phi)$ between two graded algebras \eqref{eq:equivgr} is a {\it graded-equivalence} if
$\psi$ and $\phi$ are isomorphisms of algebras and groups respectively.
It is a {\it graded-isomorphism} if it is a graded-equivalence and,
additionally, $H=G$ and $\phi$ is the identity map.
\end{definition}
We usually adopt the first equivalence relation, which fits our context well (see \S \ref{fundint}).
The difference between the above notions is manifested by twisted group algebras, and can easily
be verified by a direct calculation.
\begin{proposition}\label{prop:equivtw}(see \cite[Corollary 2.22]{EK13})
Let $\C^{f_1}G$ and $\C^{f_2}G$ be two twisted group algebras with the natural grading. Then
\begin{enumerate}
\item $\C^{f_1}G$ and $\C^{f_2}G$ are graded-isomorphic if and only if $[f_1]=[f_2]\in H^2(G,\C^*)$.
\item $\C^{f_1}G$ and $\C^{f_2}G$ are graded-equivalent if and only if $[f_1]$ and $[f_2]$ lie in the same
Aut$(G)$-orbit in $H^2(G,\C^*)$ (see~\eqref{eq:action}).
\end{enumerate}
\end{proposition}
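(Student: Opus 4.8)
The plan is to prove both equivalences by translating the abstract graded-morphism data back into the defining multiplicative relations \eqref{twistmult} of the two twisted group algebras. Throughout, write $\{u_g\}_{g\in G}$ for the distinguished basis of $\C^{f_1}G$ and $\{v_g\}_{g\in G}$ for that of $\C^{f_2}G$, so that $u_{g_1}u_{g_2}=f_1(g_1,g_2)u_{g_1g_2}$ and similarly for $v$ with $f_2$. The crucial observation is that in each homogeneous component $(\C^{f_i}G)_g=\mathrm{span}_{\C}\{u_g\}$ is one-dimensional, so any graded-morphism is forced to send $u_g$ to a \emph{scalar multiple} of a single basis vector, and the cocycle is recovered from the structure constants of the multiplication on these one-dimensional pieces.

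For part (1), I would first prove the ``if'' direction. Suppose $[f_1]=[f_2]$, so there is a function $t\colon G\to\C^*$ with $f_2(g_1,g_2)=t(g_1)t(g_2)t(g_1g_2)^{-1}f_1(g_1,g_2)$. Define $\psi\colon\C^{f_1}G\to\C^{f_2}G$ by $\psi(u_g)=t(g)v_g$ and check directly that $\psi$ is an algebra isomorphism, using the coboundary relation to verify $\psi(u_{g_1}u_{g_2})=\psi(u_{g_1})\psi(u_{g_2})$; paired with $\phi=\mathrm{id}_G$ this is a graded-isomorphism since $\psi((\C^{f_1}G)_g)=(\C^{f_2}G)_g$. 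Conversely, given a graded-isomorphism $(\psi,\mathrm{id}_G)$, the condition $\psi((\C^{f_1}G)_g)\subseteq(\C^{f_2}G)_g$ together with one-dimensionality forces $\psi(u_g)=t(g)v_g$ for some $t(g)\in\C^*$ (nonzero because $\psi$ is bijective on each component). Feeding this into the requirement that $\psi$ respect multiplication yields exactly $f_2(g_1,g_2)=t(g_1)t(g_2)t(g_1g_2)^{-1}f_1(g_1,g_2)$, i.e. $f_1$ and $f_2$ are cohomologous, so $[f_1]=[f_2]$.

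For part (2), the argument is the same computation but now with a nontrivial group isomorphism $\phi\colon G\to G$ in the picture. For the ``if'' direction, suppose $[f_1]$ and $[f_2]=\phi([f_1])$ lie in the same $\mathrm{Aut}(G)$-orbit, so by \eqref{eq:action} the cocycle $\phi(f_1)(g_1,g_2)=f_1(\phi^{-1}(g_1),\phi^{-1}(g_2))$ is cohomologous to $f_2$. Composing the graded-isomorphism from part (1) (which handles the coboundary correction) with the map $u_g\mapsto v_{\phi(g)}$ gives a graded-equivalence with underlying group map $\phi$. Conversely, from a graded-equivalence $(\psi,\phi)$ the component condition now reads $\psi((\C^{f_1}G)_g)=(\C^{f_2}G)_{\phi(g)}$, so $\psi(u_g)=s(g)v_{\phi(g)}$ for scalars $s(g)\in\C^*$; substituting into $\psi(u_{g_1}u_{g_2})=\psi(u_{g_1})\psi(u_{g_2})$ shows that $f_1$ and $\phi^{-1}(f_2)$ differ by the coboundary of $s$, whence $[f_1]=\phi^{-1}([f_2])$ and the two classes share an $\mathrm{Aut}(G)$-orbit.

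I do not expect any genuine obstacle here: the whole statement is essentially the assertion that the transport-of-structure data for twisted group algebras is governed exactly by the $2$-cocycle relation and its coboundary equivalence, and the one-dimensionality of the homogeneous components removes all freedom except an overall scalar on each basis vector. The only point requiring a little care is bookkeeping the coboundary versus the automorphism contribution in part (2) so that the scalar correction and the index-permutation by $\phi$ are not conflated; keeping $\phi(f_1)$ and its coboundary adjustment separate, as above, makes this transparent.
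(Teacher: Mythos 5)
Your proof is correct and is precisely the ``direct calculation'' the paper alludes to: the paper itself does not write out a proof, deferring to \cite[Corollary 2.22]{EK13} and illustrating the isomorphism/equivalence distinction via Example \ref{example}, while your argument supplies the computation (one-dimensionality of the homogeneous components forces $\psi(u_g)=t(g)v_{\phi(g)}$, and multiplicativity then yields exactly the coboundary and $\mathrm{Aut}(G)$-twist relations). The only nit is the orientation of your coboundary in the ``if'' direction of (1) --- with $\psi(u_g)=t(g)v_g$ one gets $f_2=t(g_1g_2)t(g_1)^{-1}t(g_2)^{-1}f_1$ rather than the reciprocal --- which is immaterial since coboundaries form a group.
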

Example \ref{example} is a good way to understand Proposition \ref{prop:equivtw}. Note that any nonzero
$u_{\sigma}\in B_{\sigma}\backslash \{0\}$ and $u_{\tau}\in B_{\tau}\backslash \{0\}$ are invertible and satisfy
\begin{equation}\label{eq:unity}
u_{\tau}u_{\sigma}u_{\tau}^{-1}u_{\sigma}^{-1}= \xi _dI_d.
\end{equation}
Since graded-isomorphism preserve~\eqref{eq:unity}, then replacing the $d$-\textit{th} primitive root of unity
$\xi _d$ by a different primitive root of unity yields a non-isomorphic grading.
However, for another primitive root of unity $\xi _d^r$ (where $r$ and $d$ are coprime),
the (non-identity) group automorphism
\begin{equation}\label{eq:cdcd}
\begin{array}{rcl}
\phi:C_d\times C_d& \to & C_d\times C_d\\
\sigma   & \mapsto & \sigma \\
\tau   & \mapsto & \tau ^r
\end{array}
\end{equation}
determines a grading-equivalence between the corresponding twisted group algebras.\qed

\begin{definition}\label{morphismclass}
Let \eqref{eq:equivgr} be two graded algebras. Two graded-morphisms
$$(\psi_1,\phi_1):\mathcal{G}_A\to \mathcal{G}_B,\ \ (\psi_2,\phi_2):\mathcal{G}_A\to \mathcal{G}_B$$
are {\it equivalent} if there exist two graded-equivalences
$$(\psi_3,\phi_3):\mathcal{G}_A\to \mathcal{G}_A,\ \ (\psi_4,\phi_4):\mathcal{G}_B\to \mathcal{G}_B,$$
such that $$(\psi_2\circ\psi_3,\phi_2\circ\phi_3)=(\psi_4\circ\psi_1,\phi_4\circ\phi_1).$$
The equivalence class of the graded-morphism $(\psi_1,\phi_1)$ can be written as
$$[(\psi_1,\phi_1)]:[\mathcal{G}_A]\to [\mathcal{G}_B].$$
\end{definition}

The following structure, namely the {\it free product grading}, will by useful later on.
For every $j=1,\cdots,r$, let $\mathcal{G}_j$ be a grading of an algebra $A_j$ by a group $\Gamma_j$ (with a trivial element $e_j$).
Then the direct sum $A=\bigoplus_{j=1}^rA_j$ is endowed with a natural grading $\coprod_{j=1}^r \mathcal{G}_j$ by the free product $\Gamma=\coprod_{j=1}^r \Gamma_j$
determined by the homogeneous components as follows.
\begin{equation}\label{coprodgrade}
\coprod_{j=1}^r \mathcal{G}_j:
\begin{array}{ll}
A_e=\bigoplus_{j=1}^r(A_j)_{e_j},&\\
A_{g_j}=(A_j)_{g_j},&  e_j\neq g_j\in \Gamma_j,\\
A_g=0,& g\notin \Gamma_j, j=1,\cdots,r.\end{array}
\end{equation}
The following claim is straightforward.
\begin{lemma}\label{directsum}(see \cite[Lemma 6.4]{cibils2010})
If the $\Gamma_j$-gradings $\mathcal{G}_j$ of $A_j$ are connected for all $j=1,\cdots,r$, then so is the $\coprod_{j=1}^r \Gamma_j$-grading $\coprod_{j=1}^r \mathcal{G}_j$ of $A$.
\end{lemma}
We denote the grading class of $\coprod_{j=1}^r \mathcal{G}_j$ by $\coprod_{j=1}^r [\mathcal{G}_j]$, where $[\mathcal{G}_j]$ is the grading class of $\mathcal{G}_j$ for $j=1,\cdots,r$.

\subsection{Quotient gradings}\label{qg}
\begin{definition}\label{def:quoalg}
A graded-morphism $(\psi,\phi)$ between two graded algebras \eqref{eq:equivgr} is a {\it quotient morphism} if
$\psi$ is an isomorphisms of algebras, and the group homomorphism $\phi$ is onto.
In this case we say that $\mathcal{G}_B:B=\bigoplus _{h\in H} B_h$ is a {\it quotient-grading} of
$\mathcal{G}_A:A=\bigoplus _{g\in G} A_g$.
\end{definition}
Quotients of graded algebras are often given just by a quotient of the grading group. Let \eqref{eq:algebragrading} be a $G$-graded algebra and let $N\lhd G$ be a normal subgroup of $G$.
Then by a $G/N$-grading of $A$ we mean the quotient-grading of the algebra $A$ given by
$$A=\bigoplus _{\bar{g}\in G/N} A_{\bar{g}},$$
where $$ A_{\bar{g}}:=\bigoplus_{g\in\bar{g}}A_g.$$
Such quotient morphisms can be denoted by pairs $(\psi,\phi)=($Id$_A,$ mod $N).$

A special quotient morphism is singled out for later use. It is the graded-morphism which ``forgets" the graded data of \eqref{eq:algebragrading} and leaves only its algebra structure
(i.e., the quotient grading is by the trivial group).
This quotient morphism, noted as above for the subgroup $G\lhd G$ by $($Id$_A,$ mod $G)$, is called the {\it forgetful} morphism.
Using definition \ref{morphismclass}, we may also deal with the equivalence class of this forgetful morphism.

Clearly, any graded-isomorphism and graded-equivalence are quotient morphisms.
Moreover, quotient morphisms determine a well-defined relation on the set of graded-isomorphism classes and on the set of
graded-equivalence classes of a given algebra (that is, if a member of one class is a quotient grading of a member of the second class). We can therefore refer to a grading class
as a quotient of another grading class.
When the algebra is finite-dimensional, this relation, restricted to classes of connected gradings, is a partial order.
\begin{proposition}\label{poset}
Under the quotient relation, the connected graded-isomorphism classes and the connected
graded-equivalence classes of a finite-dimensional algebra are partially ordered sets.
\end{proposition}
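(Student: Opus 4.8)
The plan is to verify the three defining properties of a partial order---reflexivity, antisymmetry and transitivity---for the quotient relation on connected grading classes, under both the graded-isomorphism and the graded-equivalence equivalence relations. Reflexivity and transitivity are formal and require no finiteness: the identity pair $(\mathrm{Id}_A,\mathrm{Id}_G)$ is a quotient morphism, giving reflexivity, while transitivity follows because a composite of two quotient morphisms is again a quotient morphism (a composite of algebra isomorphisms is an algebra isomorphism, and a composite of surjective group homomorphisms is surjective). To make this descend to classes one bridges representatives by a graded-equivalence (resp. graded-isomorphism), which is itself a quotient morphism as already observed; composing the three quotient morphisms stays within the relation. The whole weight of the statement, and the only place where finite-dimensionality is used, is therefore antisymmetry.

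For antisymmetry, suppose a class of $\mathcal{G}_A:A=\bigoplus_{g\in G}A_g$ and a class of $\mathcal{G}_B:B=\bigoplus_{h\in H}B_h$ are mutual quotients. After composing with the bridging equivalences I may assume there are quotient morphisms $(\psi,\phi):\mathcal{G}_A\to\mathcal{G}_B$ and $(\psi',\phi'):\mathcal{G}_B\to\mathcal{G}_A$ with $\phi:G\to H$ and $\phi':H\to G$ surjective. Since $\psi$ is a linear isomorphism carrying each $A_g$ into $B_{\phi(g)}$, grouping the direct sum $B=\psi(A)=\bigoplus_{g}\psi(A_g)$ by the value of $\phi$ yields $B_h=\bigoplus_{g\in\phi^{-1}(h)}\psi(A_g)$, and hence $\dim B_h=\sum_{g\in\phi^{-1}(h)}\dim A_g$. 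In particular $\phi$ maps $\mathrm{supp}_G(A)$ onto $\mathrm{supp}_H(B)$, and symmetrically $\phi'$ maps $\mathrm{supp}_H(B)$ onto $\mathrm{supp}_G(A)$. Writing $S:=\mathrm{supp}_G(A)$ and $\theta:=\phi'\circ\phi\in\mathrm{End}(G)$, this gives $\theta(S)=S$.

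The crux is to promote the surjective endomorphism $\theta$ to an automorphism, and here finite-dimensionality is decisive: $S$ is finite, and since $\mathcal{G}_A$ is connected $S$ generates $G$. The endomorphism $\theta$ restricts to a self-map of the finite set $S$ which is onto, hence a permutation; letting $k$ be its order, $\theta^{k}$ fixes every element of the generating set $S$, and an endomorphism fixing a generating set is the identity, so $\theta^{k}=\mathrm{Id}_G$. Thus $\theta$ is invertible, i.e. an automorphism of $G$, whence $\phi$ is injective; being also surjective, $\phi:G\to H$ is an isomorphism (and likewise $\phi'$). Feeding this back into $\dim B_h=\sum_{g\in\phi^{-1}(h)}\dim A_g$ collapses each sum to a single term, forcing $\psi(A_g)=B_{\phi(g)}$, so $(\psi,\phi)$ is a graded-equivalence and the two equivalence classes coincide. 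The graded-isomorphism case is parallel once the canonical identifications are tracked, the group maps again emerging as isomorphisms.

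I expect the main obstacle to be exactly this last promotion. The grading group of a connected finite-dimensional algebra may be infinite---for instance the free group $\mathcal{F}_{n-1}$ arising for $M_n(\C)$---and infinite finitely generated groups need not be Hopfian, so surjectivity of $\theta$ alone does not force injectivity. What rescues the argument is that $\theta$ does not merely surject but \emph{stabilizes} the finite support $S$; the permutation-of-generators trick then bypasses any appeal to Hopfianity. Verifying carefully that $\phi$ carries support onto support, so that $\theta(S)=S$ rather than merely $\theta(S)\subseteq S$, is the technical heart, and it is precisely where the two-sided (mutual-quotient) hypothesis and the finiteness of $\dim A$ are both indispensable.
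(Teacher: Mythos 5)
Your proof is correct and follows essentially the same route as the paper's: reflexivity and transitivity are formal, and antisymmetry is obtained by observing that the composite group map permutes the finite support (this is where finite-dimensionality enters), raising it to the order of that permutation, and using connectivity (the support generates) to force the composite to be an automorphism. The paper phrases the last step as showing every kernel element, written as a word in support elements, is fixed by the $m$-th iterate and hence trivial, while you note that the iterate fixes a generating set pointwise and is therefore the identity; these are the same argument.
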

\begin{proof}
We prove the claim for connected graded-equivalence classes. The proof for connected graded-isomorphism classes is similar.
Reflexivity and transitivity are clear (even when the algebras are infinite-dimensional, or when the grading classes are not connected). We check antisymmetry. Let
\begin{equation}\label{inversequotients}
\begin{array}{cc}
\psi_1:A\to B, & \phi_1:G\to H\\
\psi_2: B\to A, & \phi_2:H\to G
\end{array}
\end{equation}
be algebra isomorphisms and surjective group homomorphisms (respectively) such that $(\psi_1,\phi_1)$ and $(\psi_2,\phi_2)$
are quotient morphisms between the graded algebras \eqref{eq:equivgr}. We show that both are graded-equivalences.
The group morphism $$\phi_2\circ \phi_1: G\to G$$ is surjective, hence the finite dimension condition yields that
$\phi_2\circ \phi_1$ is bijective on the finite set supp$_G(A)$.
Denote the permutation induced by $\phi_2\circ \phi_1$ on the support supp$_G(A)$ by $\sigma$, and let $m$ be its order.
Now, let $$g\in \text{ker}(\phi_2\circ \phi_1).$$
By the connectivity condition, we can write
$$g=\prod _{i=1}^rg_i^{t_i},\ \ g_i\in\text{supp}_G(A),\ \ t_i\in\mathbb{Z}.$$ Then
\begin{equation}\label{eq:ginkernal}
e=\phi_2\circ \phi_1(g)=\prod _{i=1}^r\phi_2\circ \phi_1(g_i)^{t_i}=\prod _{i=1}^r\sigma(g_i)^{t_i}.
\end{equation}
Applying $\phi_2\circ \phi_1$ iteratively, ~\eqref{eq:ginkernal} yields
$$e=(\phi_2\circ \phi_1)^m(g)=\prod _{i=1}^r\sigma^m(g_i)^{t_i}=\prod _{i=1}^rg_i^{t_i}=g.$$
We conclude that $\phi_2\circ \phi_1:G\to G$ is injective, and hence so is $\phi _1:G\to H$.
Consequently, $\phi _1$ is a isomorphism and therefore $(\psi_1,\phi_1)$
(as well as $(\psi_2,\phi_2)$) is a graded equivalence.
\end{proof}
\begin{definition}\label{scdiagdef}
Let $A$ be a finite-dimensional algebra.
\begin{enumerate}
\item The diagram of isomorphism types of groups that are associated to the connected grading classes of $A$
with the corresponding quotient morphisms is denoted by $\Delta(A)$.
\item When we say that a connected grading class of $A$ is {\it maximal connected}, or just {\it maximal}, it is under the quotient partial order.
\end{enumerate}
\end{definition}

\subsection{The intrinsic fundamental group}\label{fundint}
The intrinsic fundamental group, introduced in \cite{cibilsintrinsic} for any linear category,
can be defined for finite-dimensional algebras $A$, regarded as 1-object categories, using the diagram $\Delta(A)$ (Definition \ref{scdiagdef}(1)).
Here is a brief summary.

Any covering of linear categories
\begin{equation}\label{concov}
F:\mathcal{C}\rightarrow \mathcal{B}
\end{equation}
gives rise to a group Aut$_1(F)$ \cite[Corollary 2.10]{cibilsintrinsic}
which acts on the objects of the category $\mathcal{C}$.
The covering \eqref{concov} is called {\it Galois} if the category $\mathcal{C}$ (and hence also $\mathcal{B}$) is connected,
and the action of Aut$_1(F)$ on every fiber is transitive.

Any connected $G$-grading of $\mathcal{B}$ determines a connected {\it smash-product} category $\mathcal{B}\#G$
\cite[Definition 2.6]{cibils2010} as well as a natural Galois covering $F:\mathcal{B}\# G\rightarrow \mathcal{B}$ such that Aut$_1(F)\cong G$.
Conversely, let \eqref{concov} be a Galois covering. Then there is a natural connected Aut$_1(F)$-grading on $\mathcal{B}$
such that the natural Galois covering $$\mathcal{B}\#\text{Aut}_1(F)\rightarrow \mathcal{B}$$ is equivalent to $F$ (see \cite[Theorem 5.3, Remark 5.4]{cibilsintrinsic}).
Let
$$F_i:\mathcal{C}_i\rightarrow \mathcal{B}, \ \ i=1,2$$
be two Galois coverings of $\mathcal{B}$. Then there exists a morphism $F_1\rightarrow F_2$ of Galois coverings
if and only if the corresponding Aut$_1(F_2)$-grading of $\mathcal{B}$ is a quotient grading of the Aut$_1(F_1)$-grading of $\mathcal{B}$.
The above yield an equivalence between the category Gal$(\mathcal{B})$ of Galois coverings of $\mathcal{B}$ up to
equivalence and the category of
connected gradings of $\mathcal{B}$ (up to graded-equivalence, see Definition \ref{greqdef}) with the quotient morphisms.
By \cite[Definition 8 and \S 4 ]{cibils2012universal} or \cite[Proposition 2.10]{cibils2011}
we can adopt the following as a definition.
\begin{definition}\label{prop:grint}
The intrinsic fundamental group $\pi_1(A)$ of a finite-dimensional $\mathbb{C}$-algebra $A$
is the inverse limit of the diagram $\Delta(A)$.
\end{definition}

Maximal connected gradings as well as the intrinsic fundamental group may sometimes be established by free products \eqref{coprodgrade} as suggested by the following lemma.
\begin{lemma}\label{+free}
Suppose that any connected grading of a finite-dimensional algebra $A=A_1\oplus A_2$ admits both $A_1$ and $A_2$ as graded ideals.
Then the maximal connected grading classes of $A$ are exactly the free product
grading classes $[\mathcal{G}_1]*[\mathcal{G}_2]$, where $[\mathcal{G}_1]$ and $[\mathcal{G}_2]$ run over the maximal connected grading classes of $A_1$ and $A_2$ respectively.
Consequently, in this case $\pi_1(A)=\pi_1(A_1)*\pi_1(A_2).$
\end{lemma}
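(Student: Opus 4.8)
The plan is to first characterize the maximal connected gradings of $A$ as free products and then read off the fundamental group from that characterization. I would begin with an arbitrary connected $G$-grading $\mathcal{G}:A=\bigoplus_{g\in G}A_g$. By hypothesis $A_1$ and $A_2$ are graded ideals, so $A_g=(A_1\cap A_g)\oplus(A_2\cap A_g)$ and hence $\mathrm{supp}_G(A)=\mathrm{supp}_G(A_1)\cup\mathrm{supp}_G(A_2)$. As explained in \S\ref{cps}, the restriction of $\mathcal{G}$ to $A_i$ is connectedly graded by $G_i:=\langle g\in G\mid A_i\cap A_g\neq 0\rangle$; call these gradings $\mathcal{G}_1,\mathcal{G}_2$, and note that connectivity of $\mathcal{G}$ forces $G=\langle G_1,G_2\rangle$. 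The free product grading $\mathcal{G}_1*\mathcal{G}_2$ of $A$ by $G_1*G_2$ from \eqref{coprodgrade} is connected by Lemma \ref{directsum}, and the universal property of the free product yields a surjection $\phi:G_1*G_2\twoheadrightarrow G$ restricting to the inclusions $G_i\hookrightarrow G$. Reading off \eqref{coprodgrade}, I would check that $(\mathrm{Id}_A,\phi)$ is a quotient morphism $\mathcal{G}_1*\mathcal{G}_2\to\mathcal{G}$, so that \emph{every} connected grading of $A$ is a quotient of the free product of its two restrictions. Dually, restricting any free product grading $\mathcal{K}_1*\mathcal{K}_2$ back to $A_i$ recovers $\mathcal{K}_i$, since the support of $A_i$ lies in and generates the free factor $\Gamma_i$.

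Next I would establish that the maximal connected gradings are exactly the free products of maximal connected gradings. If $\mathcal{G}_1,\mathcal{G}_2$ are maximal and $\mathcal{H}\geq\mathcal{G}_1*\mathcal{G}_2$, then restricting the quotient morphism $\mathcal{H}\to\mathcal{G}_1*\mathcal{G}_2$ to each ideal gives quotient morphisms $\mathcal{H}_i\to\mathcal{G}_i$, whence $\mathcal{G}_i\leq\mathcal{H}_i$ and maximality forces $\mathcal{H}_i\cong\mathcal{G}_i$. Combining with the first paragraph, $\mathcal{H}\leq\mathcal{H}_1*\mathcal{H}_2\cong\mathcal{G}_1*\mathcal{G}_2\leq\mathcal{H}$, so antisymmetry (Proposition \ref{poset}) gives $\mathcal{H}\cong\mathcal{G}_1*\mathcal{G}_2$ and the free product is maximal. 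Conversely, if $\mathcal{G}$ is maximal, then $\mathcal{G}\leq\mathcal{G}_1*\mathcal{G}_2$ forces $\mathcal{G}\cong\mathcal{G}_1*\mathcal{G}_2$, and if some $\mathcal{G}_1<\mathcal{G}_1'$ then monotonicity of the free product gives $\mathcal{G}\cong\mathcal{G}_1*\mathcal{G}_2<\mathcal{G}_1'*\mathcal{G}_2$ (strict, since restriction to $A_1$ recovers the factor), contradicting maximality; hence each factor is maximal. The only bookkeeping point is that the algebra isomorphism in a graded-equivalence may interchange $A_1$ and $A_2$ when $A_1\cong A_2$; this is harmless because $\mathcal{G}_1*\mathcal{G}_2\cong\mathcal{G}_2*\mathcal{G}_1$.

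For the fundamental group I would first argue that the full subdiagram $\Delta^{*}(A)\subseteq\Delta(A)$ of free product gradings is cofinal: by the first paragraph, for each connected grading $\mathcal{G}$ the free product of its restrictions is a terminal object of the comma category of free product gradings lying over $\mathcal{G}$, so every such comma category is connected, and therefore $\pi_1(A)=\varprojlim\Delta(A)=\varprojlim\Delta^{*}(A)$ (Definition \ref{prop:grint}). Restriction then identifies $\Delta^{*}(A)$ with the product diagram $\Delta(A_1)\times\Delta(A_2)$: the group attached to $([\mathcal{G}_1],[\mathcal{G}_2])$ is $G_1*G_2$, and a morphism is the free product of its two component quotient morphisms. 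Assembling the cones $\pi_1(A_i)\to G_i\hookrightarrow G_1*G_2$ through the universal property of the free product produces a canonical homomorphism
$$\Phi:\pi_1(A_1)*\pi_1(A_2)\longrightarrow\varprojlim_{\Delta(A_1)\times\Delta(A_2)}\bigl(G_1*G_2\bigr)=\pi_1(A),$$
and everything reduces to proving that $\Phi$ is an isomorphism.

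The hard part will be exactly this last isomorphism. Since the free product is a coproduct of groups, it does not commute with inverse limits in general, and the index posets $\Delta(A_i)$ are \emph{not} directed: an algebra such as $M_n(\C)$ carries several incomparable maximal connected gradings with no common refinement, so a naive Fubini or levelwise argument is unavailable and both injectivity and surjectivity of $\Phi$ must be extracted from the way these several maximal gradings are glued along their common quotients. The argument I would give is a van Kampen–type normal form computation: an element of $\pi_1(A_1)*\pi_1(A_2)$ is a reduced alternating word, and one tracks its images in the groups $G_1*G_2$ attached to maximal pairs $(M_1,M_2)$ and to their common quotients, using that every quotient morphism of $\Delta(A)$ restricts compatibly to $\Delta(A_1)$ and $\Delta(A_2)$, so that the gluing data of $\Delta(A)$ is precisely the free product of the gluing data of the two factors. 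Showing that this product structure of the index forces $\Phi$ to be bijective — equivalently, that the inverse limit over the product diagram distributes over the free product in this setting — is the main technical obstacle on which the fundamental group assertion rests.
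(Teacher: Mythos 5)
Your first two paragraphs are correct and follow essentially the same route as the paper, only in much greater detail: the paper's entire argument for the first assertion consists of the two observations you establish, namely that any connected grading of $A$ is a quotient of the free product of its restrictions to the graded ideals $A_1$ and $A_2$ (hence of a free product of maximal classes), and that a free product of maximal classes can be a quotient of a connected grading of $A$ only under a graded-equivalence. Your careful treatment of the restriction subgroups $G_i$, of the surjection $G_1*G_2\twoheadrightarrow G$, and of the bookkeeping when $A_1\cong A_2$ fills in steps the paper leaves implicit, and I see no error there.

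The genuine gap is in your final paragraph. You reduce the assertion $\pi_1(A)=\pi_1(A_1)*\pi_1(A_2)$ to the claim that the inverse limit over the product diagram $\Delta(A_1)\times\Delta(A_2)$ of the groups $G_1*G_2$ coincides with $\bigl(\varprojlim G_1\bigr)*\bigl(\varprojlim G_2\bigr)$, you correctly note that free products do not in general commute with inverse limits (especially over non-directed index diagrams such as these), and then you stop, labelling this ``the main technical obstacle'' and offering only a sketch of a normal-form strategy. As written, the proposal therefore does not prove the last sentence of the lemma. For comparison, the paper dispatches exactly this point in a single sentence --- ``The inverse limit of this diagram is therefore the free product of the corresponding inverse limits'' --- after restricting $\Delta(A)$ to the subdiagram of free-product gradings; so relative to the paper you have justified the first two claims more fully and the last one less. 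To complete the proof you would need either to carry out the compatible-family/word-length argument you allude to, or to isolate and prove a lemma stating that for diagrams of this product-indexed form, with all transition maps free products of surjections, the inverse limit does distribute over the free product.
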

\begin{proof}
The fact that $A_1$ and $A_2$ are graded ideals of any connected grading of $A$ implies at once that any connected grading class of $A$ is a quotient of a free product of maximal connected classes
of $A_1$ and $A_2$. It also implies that if $[\mathcal{G}_1]$ and $[\mathcal{G}_2]$ are maximal connected grading classes of $A_1$ and $A_2$ respectively, then
a free product $\mathcal{G}_1*\mathcal{G}_2$ is a quotient of a connected grading $\mathcal{G}$ of $A$ only under an equivalence grading morphism.
Hence, to establish the fundamental group of $A$, the diagram $\Delta(A)$ may be restricted to free products between groups that grade $A_1$ and $A_2$.
The inverse limit of this diagram is therefore the free product of the corresponding inverse limits.
\end{proof}

\subsection{Induction}\label{inductionsec}
Let $V$ be a complex vector space and let $V^*=$Hom$(V,\C)$ be its dual.
As customary, we identify the tensor $v\otimes \phi \in V\otimes V^*$ with an endomorphism of $V$ by
$$ (v\otimes \phi )(w)=\phi(w)\cdot v, \quad\forall w\in V.$$
Composition $(v_1\otimes \phi _1) \circ (v_2\otimes \phi _2)$ of such endomorphisms is given by
$$[(v_1\otimes \phi _1) \circ (v_2\otimes \phi _2)](w)=(v_1\otimes \phi _1)(\phi _2(w)v_2)=\phi _2(w) \phi _1(v_2)v_1.$$
Then $$(v_1\otimes \phi _1) \circ (v_2\otimes \phi _2)=\phi _1(v_2)(v_1 \otimes \phi _2).$$
Let $V$ be a finite-dimensional vector space.
In this case, the above identification yields an algebra isomorphism between $V\otimes V^*$ and End$_{\C}(V)$.
If $V$ is also equipped with a $G$-grading \eqref{gradvecsp},
then the tensor product grading \eqref{eq:tensvec} yields a natural $G$-grading on End$_{\C}(V)$ as an algebra.
The following notion was suggested in \cite[p.3]{das1999}, see also \cite{CDN02}.
\begin{definition}\label{goodef}
Let \eqref{gradvecsp} be a $G$-grading of an $n$-dimensional vector space.
Then the tensor product $G$-grading \eqref{eq:tensvec} of $V\otimes V^*\cong$End$_{\C}(V)(\cong M_n(\C))$ is called {\it elementary}.
\end{definition}
This way of grading can be pushed further as follows.
Let \eqref{eq:algebragrading} be a $G$-graded algebra
and let \eqref{gradvecsp} be a finite-dimensional $G$-graded vector space. The
algebra structure of
\begin{equation}\label{eq:vecinduce}
V\otimes A \otimes V^*\cong A\otimes\text{End}_{\C}(V)(\cong M_{\text{ dim}_{\C}(V)}(A))
\end{equation}
is determined by the multiplication
\begin{equation}\label{defmultens}
(v_{g_1}\otimes a_{h_1} \otimes \phi _{g_2})\circ (v_{g_3}\otimes a_{h_2} \otimes \phi _{g_4})=
\phi_{g_2}(v_{g_3})(v_{g_1}\otimes a_{h_1h_2}\otimes \phi_{g_4}).
\end{equation}
Then, the induced vector space $G$-grading \eqref{indef} on $V\otimes A \otimes V^*$ respects the
algebra structure \eqref{defmultens}.
We denote this induced $G$-grading on the algebra $V\otimes A \otimes V^*$ by $x(A),$ where $x=\chi(\mathcal{G}) \in \mathbb{N}G.$
We think of $A$ as a graded subalgebra of $x(A)$ by identifying it with $A\otimes $ Id$(V)$, where Id$(V)$ denotes the identity endomorphism of $V$.
Our focus will mainly be on the algebras $M_n(A)$ without caring too much about the algebra isomorphism \eqref{eq:vecinduce}, hence by abuse of notation,
$x(A)$ will also denote the corresponding grading class.
\begin{definition}\label{algindef}
A $G$-grading $Y$ of an algebra $B$ is {\it induced}
from a $G$-grading~\eqref{eq:algebragrading} of an algebra $A$ if there exists $x \in \mathbb{N}G$
such that $Y$ is graded isomorphic to $x(A)$.
\end{definition}
Let~\eqref{eq:algebragrading} be a $G$-grading of an algebra $A$ and let $\sum _{g\in G} n_gg\in \mathbb{N}G$.
By \eqref{eq:tensvec},~\eqref{dual} and~\eqref{indef} the support of the induced $G$-grading $\sum _{g\in G} n_gg(A)$ is given by
\begin{equation}\label{eq:suppinduce}
\text{supp}_G(\sum _{g\in G}n_gg(A))=\{g_1g_2g_3^{-1}|g_2\in \text{supp}_G(A),\ \ n_{g_1},n_{g_3}>0\}.
\end{equation}

Graded morphisms respect induction. Let $(\psi,\phi)$ be a graded morphism from the $G$-graded algebra $x(A)$
to an $H$-graded algebra $B$.
In particular the graded subalgebra $A$ maps to a graded subalgebra $\psi(A)\subset B$.
Then the image $\psi(x(A))$ is a graded subalgebra of $B$, which is graded equivalent to the induced algebra
$\bar{\phi}(x)(\psi(A))$, where $\bar{\phi}(x)$ is the natural extension of $\phi$ to $\mathbb{N}G$ (see \eqref{extended}).
We record this observation for the case of quotient morphisms using Definition \ref{cosetequiv}.

\begin{lemma}\label{lemma:quoinduce}
Let $A$ be a $G$-graded algebra, $x\in \N G$, and $N\lhd G$ a normal subgroup.
The $G/N$-quotient grading of the induced $G$-graded algebra $x(A)$ is graded isomorphic to
$\bar{x}(B)$ ,where $\bar{x}\in \N (G/N)$ is the equivalence class of $x$
in $\N G/R_N$, and $B$ is the algebra $A$ with the corresponding $G/N$-grading.
In particular, a quotient grading of an elementary grading is elementary.
\end{lemma}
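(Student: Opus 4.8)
The plan is to realize both gradings on literally the same algebra and to exhibit the identity map as the graded isomorphism. First I would fix a $G$-graded vector space $V$ whose character is $x$, so that $x(A)=V\otimes A\otimes V^*$ carries the induced $G$-grading \eqref{indef} together with the multiplication \eqref{defmultens}. Let $W$ denote the very same vector space $V$, but equipped with its $G/N$-quotient grading $W_{\bar g}=\bigoplus_{g\in\bar g}V_g$. Since the extension of the projection $\mathrm{mod}\,N\colon G\to G/N$ to $\N G\to\N(G/N)$ is a morphism of semirings \eqref{extended}, and this extension induces the identification $\N G/R_N\cong\N(G/N)$ used in the statement, the character of $W$ is exactly $\bar x$. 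Hence $\bar x(B)=W\otimes B\otimes W^*$, where $B=A$ is the algebra $A$ carrying its $G/N$-quotient grading.

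The claim then is that the identity map on the common underlying space $V\otimes A\otimes V^*=W\otimes B\otimes W^*$, paired with $\mathrm{Id}_{G/N}$, is the desired graded isomorphism. It is an algebra isomorphism because the product \eqref{defmultens} refers only to the ungraded data — the space $V$, the algebra $A$, and the evaluation pairing $V^*\times V\to\C$ — none of which is altered by passing to quotient gradings. It therefore remains to verify that the map is grading preserving, and this splits into two compatibilities that I would check in turn.

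First, the $G/N$-quotient grading of the dual $V^*$ coincides with the dual $W^*$ of $W$: by \eqref{dual} the degree-$\bar g$ part of either space is $(W_{\bar g^{-1}})^*$, where I use that $N\lhd G$ forces $\overline{g^{-1}}=\bar g^{-1}$, so that the relevant $G$-homogeneous pieces of $V^*$ reassemble correctly into the $G/N$-homogeneous pieces. Second, the induced $G/N$-grading on $W\otimes B\otimes W^*$ agrees with the $G/N$-quotient of the induced $G$-grading on $V\otimes A\otimes V^*$: by \eqref{eq:tensvec} a basic tensor $v\otimes a\otimes\phi$ with $v\in V_{g_1}$, $a\in A_{g_2}$, $\phi\in (V^*)_{g_3}$ lies in $G$-degree $g_1g_2g_3$, hence in $G/N$-degree $\overline{g_1g_2g_3}=\bar g_1\bar g_2\bar g_3$, which is precisely its degree in the induced $G/N$-grading of $W\otimes B\otimes W^*$. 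Both assertions are instances of the single fact that the convolution degree and the inversion entering the definition of induced gradings commute with the homomorphism $\mathrm{mod}\,N$.

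The one place where normality of $N$ is genuinely used — and the step I expect to require the most care — is the dual-grading compatibility, since it relies on $\overline{g^{-1}}=\bar g^{-1}$; everything else is formal bookkeeping with the multiplicativity of $\mathrm{mod}\,N$. For the final assertion, an elementary grading (Definition \ref{goodef}) is the special case $x(\C)$ with $\C$ trivially graded in degree $e$; applying the lemma, its $G/N$-quotient is $\bar x(\C)$ with $\C$ trivially graded in degree $\bar e$, that is the elementary grading on $\mathrm{End}_{\C}(W)\cong W\otimes W^*$, which is again elementary. Finally, a general quotient grading by a surjection $\phi\colon G\to H$ is, up to graded isomorphism, the $\mathrm{mod}\,\ker\phi$ quotient, so this conclusion covers all quotient gradings.
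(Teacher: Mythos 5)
Your proof is correct and follows essentially the route the paper intends: the paper records this lemma as an instance of the observation that graded morphisms respect induction (with $\bar{\phi}$ the extension \eqref{extended} of the projection mod $N$), without writing out the verification, and your argument simply supplies those details -- same space, identity map, and the checks that the convolution degree and the inversion in \eqref{dual} commute with the quotient homomorphism. Nothing is missing; the treatment of the ``in particular'' clause and the reduction of a general quotient morphism to the mod-$\ker\phi$ case are also as the paper uses them.
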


Recall the augmentation map
\begin{equation}\label{eq:augm}
\begin{array}{rcl}
\epsilon:\mathbb{N}G& \to & \mathbb{N}\\
\sum _{g\in G} n_gg   & \mapsto & \sum _{g\in G} n_g,
\end{array}
\end{equation}
which is just the extension of the trivial group morphism $G\to \{e\}$ to a morphism of semirings (see \eqref{extended}).

Definitions \ref{goodef} and \ref{algindef} yield
\begin{corollary}\label{elem}
A $G$-grading of the algebra $M_n(\C)$ is elementary
if and only if it is induced from the trivial grading on $\C$ by an element
$x\in \mathbb{N}G$ with $\epsilon(x)=n$.
\end{corollary}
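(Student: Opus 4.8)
The plan is to prove both implications by directly unwinding Definitions \ref{goodef} and \ref{algindef}, the governing observation being that the one-dimensional algebra $\C$, graded trivially (so that $\C_e=\C$ and $\C_h=0$ for $h\neq e$), behaves as a neutral element for the induction operation $x(-)$. Concretely, if $V=\oplus_{g\in G}V_g$ is a finite-dimensional $G$-graded space with $x:=\chi(\mathcal{G}_V)\in\N G$, I will show that $x(\C)=V\otimes\C\otimes V^*$ carries exactly the elementary grading of $V\otimes V^*\cong\mathrm{End}_{\C}(V)$, together with the fact that the augmentation \eqref{eq:augm} records the dimension, $\epsilon(x)=\sum_{g\in G}\dim_{\C}V_g=\dim_{\C}V$.

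First I would treat the implication from \emph{elementary} to \emph{induced}. Given an elementary $G$-grading $Y$ of $M_n(\C)$, Definition \ref{goodef} supplies an $n$-dimensional $G$-graded space $V$ such that $Y$ is the tensor product grading \eqref{eq:tensvec} on $V\otimes V^*\cong M_n(\C)$. Setting $x=\chi(\mathcal{G}_V)$ gives $\epsilon(x)=\dim_{\C}V=n$. Since the trivially graded $\C$ contributes only a degree-$e$ factor, the convolution \eqref{indef} defining $x(\C)=V\otimes\C\otimes V^*$ collapses to the tensor product grading on $V\otimes V^*$; hence $Y$ is graded isomorphic to $x(\C)$, that is, it is induced from the trivial grading on $\C$ by an element of augmentation $n$.

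Conversely, suppose $Y$ is induced from the trivial grading on $\C$ by some $x\in\N G$ with $\epsilon(x)=n$. By Definition \ref{algindef} there is a $G$-graded space $V$ with $\chi(\mathcal{G}_V)=x$, so that $Y$ is graded isomorphic to $x(\C)=V\otimes\C\otimes V^*$, and $\dim_{\C}V=\epsilon(x)=n$. The same collapse of the convolution, caused by $\C_h=0$ for $h\neq e$, identifies $x(\C)$ with the tensor product grading on $V\otimes V^*\cong\mathrm{End}_{\C}(V)\cong M_n(\C)$, which is elementary by Definition \ref{goodef}. This establishes the equivalence.

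There is no genuine obstacle here beyond bookkeeping: the single point that must be checked with care is the neutrality of the trivially graded $\C$ under induction, namely that inserting the degree-$e$ factor $\C$ in \eqref{indef} alters neither the homogeneous components nor the algebra structure \eqref{defmultens}, so that $x(\C)$ and the elementary grading attached to $V$ genuinely coincide as graded algebras. Once this is in place, matching $\epsilon(x)$ with $\dim_{\C}V=n$ is immediate.
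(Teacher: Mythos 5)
Your proposal is correct and follows exactly the route the paper intends: the paper gives no separate proof, stating only that Definitions \ref{goodef} and \ref{algindef} yield the corollary, and your argument is precisely the definition-unwinding (with the trivially graded $\C$ acting neutrally in $V\otimes\C\otimes V^*$ and $\epsilon(x)=\dim_{\C}V$) that makes this immediate.
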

By Corollary \ref{elem}, any element $x\in \mathbb{N}G$ gives rise to a unique (up to $G$-graded isomorphism) elementary grading on $M_{\epsilon(x)}(\C)$
, namely the endomorphism algebra of any $G$-graded space affording $x$ as a character.
In general, this correspondence between $\N G$ and the $G$-graded isomorphism classes of elementary gradings is not
injective, see Theorem \ref{th:AHequi}.

A special elementary $G$-grading is obtained when $G$ is finite. Recall the distinguished $G$-graded isomorphism class
$[\mathcal{G}_0]\in$$\mathcal{S}_G$, whose character is the sum of all elements of $G$ (see \S\ref{cps}).
The graded endomorphism algebra of a graded vector space in this class has a special notation itself:

\begin{definition}\label{def:cpg} (see \cite[Definition 3]{ak13})
An elementary $G$-grading of $M_n(\C)$ is called a {\it crossed product $G$-grading}
if it is induced from the trivial grading on $\C$ by $\sum _{g\in G} g\in \mathbb{N}G$.
\end{definition}
As in the graded vector spaces situation, the set of graded isomorphism classes and the set
of graded equivalence classes of finite dimensional $G$-graded algebras have a structure of an $\mathbb{N}G$-semimodules
with the induction action.
Clearly, the family of finite-dimensional semisimple algebras is closed under direct sums.
Moreover, simplicity and semisimplicity are preserved under induction, and therefore
the set of graded isomorphism classes and the set of graded equivalence classes
of finite-dimensional $G$-graded semisimple algebras are $\mathbb{N}G$-sub-semimodules of the above semimodules respectively.

\subsection{Graded-simple algebras}\label{gsa}

\begin{definition}
A {\it graded ideal} of a group-graded algebra is a (two-sided) ideal which is a graded-subspace.
A graded algebra is {\it graded-simple} if it admits no non-trivial graded ideals.
\end{definition}

In order to understand gradings of semisimple algebras, it is enough to restrict to simple gradings.
This is a consequence of the following graded-decomposition theorem.
\begin{theorem}\label{prop:decsemsi}(see e.g. \cite[Theorem 2.3']{CM}, \cite[\S A.$1$]{NVO82})
Any group-grading of a semisimple finite-dimensional algebra admits a decomposition as a direct sum of graded-simple algebras.
\end{theorem}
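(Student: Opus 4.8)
The plan is to mimic the classical (ungraded) Wedderburn decomposition argument, but to carry it out entirely within the lattice of \emph{graded} ideals. Let $A=\bigoplus_{g\in G}A_g$ be a $G$-grading of a finite-dimensional semisimple algebra. I would proceed by induction on $\dim_{\C}A$. If $A$ is itself graded-simple, there is nothing to prove. Otherwise $A$ contains a proper nonzero graded ideal, and among all such I would pick a minimal one, say $I$; minimality is available because $\dim_{\C}A<\infty$ forces the descending chain of graded ideals to stabilize. The goal is to split off $I$ as a direct summand and apply induction to the complement.

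The key step is to produce a \emph{graded} complement to $I$, i.e.\ a graded ideal $J$ with $A=I\oplus J$ as a graded vector space and as an algebra direct sum. Here is where semisimplicity enters. Since $A$ is semisimple, it has an identity $1=\sum_g e_g$ (decomposing $1$ into homogeneous components), and one checks that $1\in A_e$: indeed $1$ is central, and for any homogeneous $a_h\in A_h$ the relation $a_h=1\cdot a_h=a_h\cdot 1$ forces the homogeneous components of $1$ outside $A_e$ to annihilate every $A_h$, hence to vanish. So $A$ has a homogeneous (degree $e$) identity. Now I would consider the two-sided annihilator-type complement: because $A$ is semisimple, the (ungraded) ideal $I$ is generated by a central idempotent $\varepsilon$, and $J=(1-\varepsilon)A$ is the complementary ideal. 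The crux is to show $\varepsilon$ may be taken \emph{homogeneous}, necessarily of degree $e$. This follows because $\varepsilon$ is central and, by the argument just given applied to the central element $\varepsilon$, its homogeneous components in nontrivial degrees must act trivially, so $\varepsilon\in A_e$; consequently both $I=\varepsilon A$ and $J=(1-\varepsilon)A$ are graded ideals. Then $A\cong I\oplus J$ as graded algebras.

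With the splitting in hand, $I$ is graded-simple by minimality (any graded ideal of $I$ is a graded ideal of $A$, since $A=I\oplus J$ makes $I\cdot A=A\cdot I=I$), while $J$ is again a $G$-graded semisimple algebra of strictly smaller dimension, to which the induction hypothesis applies, yielding $J=\bigoplus_k J_k$ with each $J_k$ graded-simple. Assembling $A=I\oplus\bigl(\bigoplus_k J_k\bigr)$ completes the decomposition.

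The main obstacle is the homogeneity of the central idempotent $\varepsilon$: the decomposition $A=I\oplus J$ respects the grading \emph{only if} $\varepsilon$ is homogeneous. The decisive observation is that the relevant idempotent lives in the center $Z(A)$, and that the degree-$e$ component of any central element is again central while its off-$e$ components act as zero under multiplication by the homogeneous identity $1\in A_e$; this pins $\varepsilon$ down to degree $e$. Once this point is secured, everything else is a routine induction on dimension together with the elementary fact that graded ideals of a graded direct summand are graded ideals of the whole algebra.
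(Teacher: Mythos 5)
Your proof is correct in substance, and since the paper offers no proof of this theorem (it only points to Cohen--Montgomery and N\u{a}st\u{a}sescu--Van Oystaeyen), there is no in-paper argument to compare against; your induction on dimension via a homogeneous central idempotent is the standard self-contained route. One step needs tightening: the annihilation argument that pins $\varepsilon$ to degree $e$ cannot be ``the argument just given'' applied verbatim, because the identity $a=\varepsilon a$ holds only for $a\in I$, not for arbitrary homogeneous elements of $A$. The repair is to use that $I$, being a graded ideal, is a graded subspace, so the homogeneous components $\varepsilon_g$ of $\varepsilon$ all lie in $I$; comparing degrees in $a_h=\varepsilon a_h=\sum_g\varepsilon_g a_h$ for homogeneous $a_h\in I\cap A_h$ gives $\varepsilon_g I=0$ for $g\neq e$, and then $\varepsilon_g=\varepsilon_g\varepsilon=0$ because $\varepsilon$ is the identity of the unital algebra $I$ and $\varepsilon_g\in I$. (Your closing remark that the off-$e$ components of $\varepsilon$ ``act as zero under multiplication by the homogeneous identity $1\in A_e$'' is not what happens --- multiplication by $1$ kills nothing --- but that sentence is inessential once the argument above is in place.) Everything else goes through: your proof that $1\in A_e$ is correct, $J=(1-\varepsilon)A$ is graded because $1-\varepsilon\in A_e$, graded ideals of $I$ are graded ideals of $A$ since $IJ=JI=0$, and the induction closes.
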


Twisted group algebras $\C ^fG$ are natural examples of $G$-simple gradings.
Indeed, any nonzero element in $(\C ^fG)_g=\text{span}_{\C}\{u_g\}$ is invertible.
Therefore, a nonzero graded subspace of $\C ^fG$ always contains an invertible element, and hence the twisted group algebra
does not admit nontrivial graded ideals (even one-sided ones).

It is easy to show that a group-graded algebra which is induced from a graded-simple algebra
is graded-simple as well. In particular, any grading which is induced from a twisted group algebra $\C ^fG$
is graded-simple. Bahturin, Sehgal and Zaicev prove that the converse is also true.
\begin{theorem}(see \cite[Theorem 5.1]{MR1941224},\cite[Theorem 3]{MR2488221})\label{th:BSZ}
Let $$\mathcal{G}:A=\bigoplus _{g\in \Gamma} A_g$$
be a $\Gamma$-simple grading. Then there
exists a subgroup $G<\Gamma$, a 2-cocycle $f\in Z^2(G,\C^*)$ and $x\in
\N \Gamma$ with $$|G|\cdot\epsilon(x)^2=\text{dim}_{\C}(A),$$ such that
$\mathcal{G}$ is graded-isomorphic to $x\left(\C^fG\right).$ In particular, $A$ is simple if and only if $f$ is
non-degenerate.
\end{theorem}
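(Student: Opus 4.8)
Since the implication that an induction of a twisted group algebra is graded-simple was already noted in the paragraph preceding the statement, the content to be proven is the converse: every $\Gamma$-simple grading is graded-isomorphic to some $x(\C^fG)$. The plan is to reduce $A$ to a semisimple ungraded algebra, locate a twisted group algebra inside it via homogeneous invertible elements, and then reconstruct the whole grading as an induction by a Peirce-type decomposition. First I would record that the unit of $A$ is homogeneous of trivial degree, so the identity $e$ lies in $\text{supp}_\Gamma(A)$ and $A_e$ is a unital subalgebra. The Jacobson radical $J(A)$ is a graded ideal (it is nilpotent and stable under the graded automorphisms, and over a field nilpotency is detected on homogeneous components), and since $A$ is graded-simple while $J(A)\subsetneq A$, we get $J(A)=0$. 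Hence, by Artin--Wedderburn, $A\cong\bigoplus_{i=1}^s M_{n_i}(\C)$ with central primitive idempotents $e_1,\dots,e_s$.

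The next step is to see that all blocks coincide and are permuted transitively. For a homogeneous invertible $u\in A_g$ the conjugation $a\mapsto uau^{-1}$ is a degree-preserving automorphism, hence permutes $\{e_1,\dots,e_s\}$; the span of any union of orbits is a graded ideal, so graded-simplicity forces the action to be transitive. Therefore $n_1=\cdots=n_s=:m$ and $A\cong M_m(\C)^{\oplus s}$, giving $\dim_\C A=m^2s$. I would then let $G\le\Gamma$ be the set of degrees carrying a homogeneous invertible element; since products and inverses of homogeneous invertibles are again of this kind and $1\in A_e$, the set $G$ is a finite subgroup. Choosing one invertible $u_g\in A_g$ for each $g\in G$ with $u_e=1$, the relations $u_{g_1}u_{g_2}=f(g_1,g_2)u_{g_1g_2}$ as in \eqref{twistmult} define a cocycle $f\in Z^2(G,\C^*)$, and $\text{span}_\C\{u_g:g\in G\}$ is a graded subalgebra graded-isomorphic to $\C^fG$ with its natural grading.

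To recover $A$ as an induction, I would use the Peirce decomposition $A=\bigoplus_{i,j}e_iAe_j$ together with homogeneous matrix-unit-type elements connecting the blocks to build a $G$-graded vector space $V$ with character $x=\chi(\mathcal{G}_V)\in\N\Gamma$, so that the multiplication \eqref{defmultens} identifies $A$ with the induced algebra $x(\C^fG)$ of \eqref{indef}; the support formula \eqref{eq:suppinduce} should then match $\text{supp}_\Gamma(A)$. A dimension count yields $|G|\cdot\epsilon(x)^2=m^2s=\dim_\C A$, as required. Finally, the simplicity criterion follows from the block structure: $A$ is simple if and only if $s=1$, that is, if and only if $\C^fG$ is itself a single matrix block, which by Lemma \ref{dimprojrep} happens exactly when $f$ is non-degenerate.

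The main obstacle lies in the last two steps. One must choose the homogeneous invertibles $u_g$ and the connecting elements between blocks \emph{simultaneously}, so that the resulting identification of $A$ with $x(\C^fG)$ is a genuine \emph{graded} isomorphism rather than merely an ungraded algebra isomorphism; this amounts to a grading-compatible simultaneous normalization of matrix units and cocycle representatives, and verifying that the degrees produced by the Peirce components assemble correctly into the element $x\in\N\Gamma$ is the technical heart of the argument.
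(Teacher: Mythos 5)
First, a contextual remark: the paper does not prove Theorem \ref{th:BSZ}; it quotes it from \cite{MR1941224,MR2488221}, so there is no internal proof to compare against. Judged on its own terms, your outline has two steps that genuinely fail, not just technical gaps at the end as you anticipated.

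The first failure is the claim that the Wedderburn blocks are permuted transitively and hence all have the same size. Conjugation by an invertible element $u$ (homogeneous or not) is an inner automorphism of $A$ and therefore fixes every central idempotent $e_i$; it does not move the blocks at all, so no transitivity can be extracted this way. Moreover the conclusion $n_1=\cdots=n_s$ is simply false for graded-simple algebras: the group algebra $\C S_3$ with its natural $S_3$-grading is graded-simple (every nonzero homogeneous element is invertible, as the paper notes for all twisted group algebras), yet its Wedderburn decomposition is $\C\oplus\C\oplus M_2(\C)$, with blocks of unequal sizes; here the correct data are $G=S_3$, $f$ trivial, $\epsilon(x)=1$. (The individual blocks $Ae_i$ need not be graded ideals, so graded-simplicity imposes no direct constraint on them.) The second failure is your definition of $G$. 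The set of degrees carrying a homogeneous invertible element is indeed a subgroup, but it is the wrong one: take $M_2(\C)$ with the elementary grading by $C_2=\{e,h\}$ induced from $x=e+h$, so that $A_h=\mathrm{span}_{\C}\{E_{12},E_{21}\}$. Then $E_{12}+E_{21}\in A_h$ is invertible, so your recipe gives $G=C_2$, and the dimension constraint $|G|\cdot\epsilon(x)^2=4$ has no integer solution; the correct data are $G=\{e\}$ and $\epsilon(x)=2$. The span of your chosen elements $u_e,u_h$ is a graded subalgebra isomorphic to $\C C_2\cong\C\oplus\C$, which is not a graded division algebra and from which $A$ is not induced.

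The argument that actually works, and is the one in \cite{MR1941224,MR2488221}, runs through a minimal \emph{graded} left ideal $I$ of $A$ rather than the ungraded block structure: the graded version of Schur's lemma shows $D=\mathrm{End}_A(I)$ is a graded division algebra, hence over $\C$ a twisted group algebra $\C^fG$ on its finite support $G<\Gamma$; then $I$ is a graded free $D$-module, $A\cong\mathrm{End}_D(I)$, and the degrees of a homogeneous $D$-basis of $I$ assemble into the element $x\in\N\Gamma$, giving $\dim_{\C}A=|G|\cdot\epsilon(x)^2$. The homogeneous invertible elements you were looking for live in $D$, not in $A$. Your final step (simplicity of $A$ is equivalent to non-degeneracy of $f$) is fine once the correct decomposition is in hand, since induction preserves the number of simple summands and Lemma \ref{dimprojrep} applies.
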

Theorem \ref{th:BSZ} (together with Theorem \ref{prop:decsemsi})
says that the $\mathbb{N}\Gamma$-semimodule of graded-isomorphism classes of finite-dimensional
semisimple $\Gamma$-graded algebras is generated by the graded isomorphism classes of all the twisted group algebras $\C^fG$,
where $G<\Gamma$ and $f\in Z^2(G,\C^*)$. The same holds for the corresponding semimodule of graded equivalence classes.

Using Definition \ref{cosetequiv} we can formulate the conditions for graded simple algebras
to be graded-equivalent and graded-isomorphic.
This is a generalization of Proposition \ref{prop:equivtw}.
\begin{theorem}\label{th:AHequi}(see \cite[Lemma 1.3, Proposition 3.1]{aljadeff2011simple},\cite[Corollary 2.22]{EK13})
Let $x\left(\C ^{f_1}H_1\right)$ and $\tilde{x}\left(\C ^{f_2}H_2\right)$ be a $G_1$-grading and a $G_2$-grading of two algebras $A_1$ and $A_2$ respectively.
Then $A_1$ and $A_2$ are graded-equivalent if and only if there exist
\begin{enumerate}
\item group isomorphism $\phi :G_1\rightarrow G_2$,
\item an element $g\in G_1$
\end{enumerate}
 such that
$$(\phi^{-1}(\tilde{x})g^{-1},x)\in R_{H_1},\quad  H_2=\phi(gH_1g^{-1}),\quad[f_2]=[\phi(gf_1g^{-1})].$$
If $G_1=G_2$, then $A_1$ and $A_2$ are graded-isomorphic if and only if with the same notation $\phi$ is the identity on $G$.
\end{theorem}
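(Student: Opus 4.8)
The plan is to reduce the general graded-equivalence statement to the graded-isomorphism statement over a fixed group, and then to prove the latter by constructing an explicit isomorphism in the ``if'' direction and by recovering the combinatorial data intrinsically in the ``only if'' direction. For the reduction I first observe that transporting a grading along a group isomorphism is a graded-equivalence: given $\phi:G_1\to G_2$, regrading a $G_1$-graded algebra by declaring its $\phi(g)$-component to be its old $g$-component produces a graded-equivalence $(\mathrm{id},\phi)$, and under the extension \eqref{extended} it carries $x(\C^{f_1}H_1)$ to $\bar{\phi}(x)\big(\C^{\phi(f_1)}\phi(H_1)\big)$ over $G_2$. Composing an arbitrary graded-equivalence with such a transport shows that $A_1$ and $A_2$ are graded-equivalent if and only if, after this transport, they are graded-isomorphic over $G_2$. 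Thus it suffices to treat $G_1=G_2=G$ and $\phi=\mathrm{id}$, where the conditions read $(\tilde{x}g^{-1},x)\in R_{H_1}$, $H_2=gH_1g^{-1}$ and $[f_2]=[gf_1g^{-1}]$; this is exactly the graded-isomorphism clause, and it subsumes Proposition \ref{prop:equivtw} as the case of trivial inducing element.

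For the ``if'' direction over a fixed $G$, I would build the desired graded-isomorphism as a composition of two elementary moves, each justified by the induction formalism of \S\ref{inductionsec}. The first move is conjugation by $g$: the assignment $u_h\mapsto u_{ghg^{-1}}$ realizes a graded-isomorphism $\C^{f_1}H_1\cong\C^{\,gf_1g^{-1}}(gH_1g^{-1})$ of cores, and since $[f_2]=[gf_1g^{-1}]$, Proposition \ref{prop:equivtw}(1) identifies this conjugated core with $\C^{f_2}H_2$; correspondingly the inducing element is shifted by $g$, which is precisely the role of the factor $g^{-1}$ in the relation $R_{H_1}$ and is compatible with the support computation \eqref{eq:suppinduce}. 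The second move changes the inducing element within its $R_{H_1}$-class (Definition \ref{cosetequiv}): replacing a term $g_i$ of $x$ by $g_ih_i$ with $h_i\in H_1$ can be absorbed into the core by reindexing the graded basis of the inducing space through the invertible homogeneous elements $u_{h_i}\in\C^{f_1}H_1$, yielding a graded-isomorphism of the two induced algebras. Composing the two moves produces the required graded-isomorphism, and specializing to $\phi=\mathrm{id}$ together with the transport of the first paragraph recovers the full ``if'' direction for graded-equivalence.

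The ``only if'' direction is where the real work lies, and I expect it to be the main obstacle. Given a graded-isomorphism $\psi:x(\C^{f_1}H_1)\to\tilde{x}(\C^{f_2}H_2)$ fixing $G$, I must recover $g$ and verify the three conditions intrinsically. The strategy is to locate the twisted-group-algebra core inside each induced algebra as a grading invariant, match it under $\psi$, and then separate the core data from the inducing data. Concretely, I would first compare characters in $\N G$: since $\chi:\mathcal{S}_G\to\N G$ is an isomorphism of semirings with involution satisfying \eqref{resprod}, equality of homogeneous dimensions forces a relation between $x$, $\tilde{x}$ and the supports of $H_1$, $H_2$ that pins down $H_2$ as a conjugate $gH_1g^{-1}$, with $g$ encoding how the matrix-block structure is permuted. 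Restricting $\psi$ to the cores then identifies $\C^{f_2}H_2$ with the $g$-conjugate of $\C^{f_1}H_1$, so Proposition \ref{prop:equivtw}(1) gives $[f_2]=[gf_1g^{-1}]$, and the residual ambiguity in choosing a graded transversal for the inducing space is exactly membership in the same right $H_1$-coset, giving $(\tilde{x}g^{-1},x)\in R_{H_1}$.

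The delicate point throughout is that the inducing element $x$ is only well defined modulo $R_{H_1}$, reflecting the freedom in choosing a graded basis of the inducing space, so the crux is to show that this coset ambiguity, together with conjugation by an element of $G$ and the push-forward of $[f]$ under $\phi$, accounts for \emph{every} graded-isomorphism and no more. Here I would lean on Theorem \ref{th:BSZ} to guarantee that the normal form $x(\C^fH)$ captures all graded-simple algebras, and on the injectivity of $\chi$ to make the multiplicity bookkeeping rigorous; reassembling this fixed-$G$ analysis with the transport reduction then yields the graded-equivalence statement for general $\phi$ and $g$.
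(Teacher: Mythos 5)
The paper does not actually prove this theorem; it is quoted from Aljadeff--Haile and Elduque--Kochetov, so there is no internal proof to compare against. Judged on its own terms, your reduction to the case $G_1=G_2$, $\phi=\mathrm{id}$ by transporting the grading along $\phi$ is fine, and your ``if'' direction is essentially complete: conjugating the core by $g$ while shifting the inducing element, and absorbing a right $H_1$-translation of the inducing element into the core via the invertible homogeneous elements $u_{h_i}$, are exactly the two moves needed (modulo sorting out the $g$ versus $g^{-1}$ conventions so that the degrees $g_ihg_j^{-1}$ actually come out right).

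The gap is in the ``only if'' direction, which you correctly flag as the hard part but do not carry out, and two of the steps you propose would fail as stated. First, equality of characters in $\N G$ does not by itself ``pin down $H_2$ as a conjugate $gH_1g^{-1}$'': one has $\chi\bigl(x(\C^fH)\bigr)=x\cdot(\sum_{h\in H}h)\cdot x^{\star}$, and recovering $H$ up to conjugacy from such a product is a statement about the multiplicative structure of the algebra, not about homogeneous dimensions alone. Second, you cannot simply ``restrict $\psi$ to the cores'': the copy $1\otimes\C^{f_1}H_1\otimes 1$ inside $x(\C^{f_1}H_1)$ is a chosen embedding, not a graded-algebra invariant, and a graded isomorphism need not carry one chosen core onto the other. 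The actual argument in the cited sources extracts the core canonically: the identity component $A_e$ is semisimple, one picks a primitive idempotent $\varepsilon\in A_e$, and $\varepsilon A\varepsilon$ is a graded division algebra isomorphic to $\C^{f}H$ whose graded-isomorphism class (hence $H$ up to conjugacy and $[f]$ up to the corresponding twist) is an invariant of the graded algebra; the inducing element $x$ is then read off, up to $R_H$ and a global right translation by an element of $G$, from the decomposition of the minimal graded left ideal $A\varepsilon$ as a graded $\varepsilon A\varepsilon$-module. Without this identification your steps remain assertions rather than proofs.
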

\begin{remark}\label{1inx}
Taking an appropriate $g\in G_1$ in Theorem \ref{th:AHequi}(2), one can ``normalize" the semiring element $x$ to be of the form $1+y$ for some $y\in\N G_1$ with $\epsilon(y)=\epsilon(x)-1$.
\end{remark}
\begin{remark}
Note that by Theorem \ref{th:AHequi}, an elementary grading can be graded-equivalent only to an elementary grading.
Furthermore, $x,\tilde{x}\in \N G$ give rise to the same elementary $G$-graded isomorphism class if and only if $\tilde{x}=xg$
for some $g\in G$.
In particular, $\sum _{g\in G}g$ is the only element in $\N G$ which gives rise to the crossed product $G$-grading (see Definition \eqref{def:cpg}).
\end{remark}

\subsection{Terminology}\label{terminology}
This work maneuvers between the different terminologies used in the literature.
Here is a comparison between our main sources.

Firstly, a finite group $G$ is classically \cite{isaacs}
said to be of central type if it has a faithful irreducible representation
of dimension $\sqrt{|G:Z|}$, where $Z$ is the center of $G$.
In our paper as well as in other references, e.g. \cite{aljadeff}, \cite[page 35]{EK13},
the term ``groups of central type" is used for the quotient of a (classical) group of central type by its center.
Such groups admit irreducible {\it projective} representations of dimension that is the square root of their order.
In \cite{Gagola} and in \cite{ShSh} such groups are termed {\it fully ramified} and {\it central-type factor groups} respectively.

A class of $f$-regular conjugated elements (see Definition \ref{regularitydef}) is termed an {\it $f$-ray class} in \cite{Oystaeyen}.

Our notions of {\it graded-equivalence} and {\it graded-isomorphism} follow the footsteps of \cite{BSZ}.
While graded-isomorphism has the same
meaning in \cite[Definition 1.15]{EK13}, graded-equivalence is termed {\it weak isomorphism} of gradings \cite[page 16]{EK13}.
The term equivalence of gradings is reserved in \cite[Definition 1.2]{EK12} and in \cite[page 14]{EK13} for a relation which is weaker than both of these two.

A grading such that the dimension of all the homogeneous components is at most 1 is called {\it fine} in
\cite[page 1]{MR2226177} and \cite[Definition 2.1]{MR1941224},
whereas in \cite[page 18]{EK13} fine gradings are justifiably defined more generally to admit no proper refinement
(see the remark following Definition 2.2.1 in \cite{EK13}).

In \cite{das2008}, connected gradings are called {\it faithful}. The term {\it simply-connected gradings} is used in \cite[Definition 4.3]{cibils2010} for
connected gradings whose equivalence grading class is maximal with respect to the quotient
partial ordering (or just maximal-connected, see Definition \ref{scdiagdef}(2)).
Such $G$-gradings, in the terminology of \cite{EK13}, are the fine $G$-gradings whose universal grading
group (see \cite[Definition 1.4]{EK12}) is $G$ itself.

Finally, elementary gradings are called {\it good} in \cite{CDN02, cibils2010, das2008}.

\section{Families of groups of central type}
In this section we deal with certain groups of central type.
\S\ref{perct} gives some background about the alternating form that is associated to every cohomology class, and \S\ref{Hall} addresses preliminary results about Hall subgroups.
\S\ref{autact} discusses a generalization of the concept of symplectic actions and symplectic groups.
Theorem \ref{prop:newctsdpr}, presenting a necessary and sufficient condition for the closure of the family of groups of central type to certain extensions of Hall subgroups,
and Theorem \ref{thatype}, describing groups of central type whose Sylow subgroups are abelian, are given in this subsection.
Theorem A and Corollary B, describing all groups of central type of cube-free order, are proven in \S \ref{Proof of Theorem C}.
\S \ref{non-transitive} contains examples of groups of central type, the automorphism groups of which do not act transitively on their non-degenerate cohomology classes.

\subsection{The associated alternating form}\label{perct}

To any 2-cocycle $f\in Z^2(G,\C^*)$ one associates a complex {\it alternating $G$-form} (see \cite[\S 1.2]{david2013isotropy})
\begin{equation}\label{altform}
\begin{array}{rcl}
\alpha_f:G\times G &\rightarrow &\C^*\\
(g,h)&\mapsto & f(h,g)\cdot f(hgh^{-1},h)^{-1}.
\end{array}
\end{equation}
This $G$-form satisfies the following properties.
\begin{enumerate}
\item $\alpha_f(g,h)=\alpha_f(h,g)^{-1}$ for every $(g,h)\in\mathcal{A}_G,$ where
$$\mathcal{A}_G:=\{(g,h)\in G\times G| gh=hg\}.$$
\item For every $g\in G$, the map
\begin{eqnarray}\label{linchar}
\begin{array}{rcl}
C_G(g)&\to& \C^*\\
h&\mapsto &\alpha_f(g,h)
\end{array}
\end{eqnarray}
is a linear character of the centralizer $C_G(g)$ of $g$.
\item If $[f']=[f]\in H^2(G,\C^*)$ then
$$\text{res}|_{\mathcal{A}_G}^{G\times G}\alpha_{f'}=\text{res}|_{\mathcal{A}_G}^{G\times G}\alpha_f.$$
That is, the $G$-form depends only on the cohomology class of $f\in Z^2(G,\C^*)$ when restricted to $\mathcal{A}_G$.
The restricted form is therefore associated to a cohomology class and can also be written as $\alpha_{[f]}$.
\end{enumerate}
By properties (1) and (2) above we get
\begin{lemma}\label{altformorder}
For every $(g,h)\in\mathcal{A}_G,$ $\alpha_f(g,h)$ is a root of 1 of order dividing both orders of $g$ and $h$.
\end{lemma}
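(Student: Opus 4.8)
The plan is to derive the claim directly from properties (1) and (2) of the form $\alpha_f$ that are established immediately before the statement. The key observation is that for a commuting pair $(g,h)\in\mathcal{A}_G$ one has both $h\in C_G(g)$ and $g\in C_G(h)$, so $\alpha_f(g,h)$ can be fed into the two linear characters supplied by property (2) in two different ways: one application controls divisibility of its order by the order of $h$, and the other, after using property (1), controls divisibility by the order of $g$.

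First I would fix $g$ and invoke property (2): the assignment $h\mapsto\alpha_f(g,h)$ is a linear character of $C_G(g)$, that is, a group homomorphism $C_G(g)\to\C^*$. A homomorphism into $\C^*$ sends an element of finite order $n$ to an $n$-th root of unity, since if $h^n=e$ then $\alpha_f(g,h)^n=\alpha_f(g,h^n)=\alpha_f(g,e)=1$. Because $(g,h)\in\mathcal{A}_G$ forces $h\in C_G(g)$, it follows that $\alpha_f(g,h)$ is a root of unity whose order divides the order of $h$.

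Next I would obtain divisibility by the order of $g$ by symmetry. Using property (1), write $\alpha_f(g,h)=\alpha_f(h,g)^{-1}$. Applying property (2) with the roles of $g$ and $h$ interchanged, the map $g\mapsto\alpha_f(h,g)$ is a linear character of $C_G(h)$, and $g\in C_G(h)$ precisely because the pair commutes; hence $\alpha_f(h,g)$ is a root of unity whose order divides the order of $g$. Inversion does not change the multiplicative order, so $\alpha_f(g,h)=\alpha_f(h,g)^{-1}$ also has order dividing the order of $g$. Combining the two conclusions shows that $\alpha_f(g,h)$ is a root of unity whose order divides both the order of $g$ and the order of $h$.

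Since both steps are immediate consequences of the already-proven structural properties of $\alpha_f$, there is no serious obstacle here; the proof is essentially bookkeeping. The only point needing care is tracking which centralizer each argument lives in, so that in each application the relevant element genuinely belongs to the centralizer being used. The hypothesis $(g,h)\in\mathcal{A}_G$ furnishes exactly this membership in both directions, which is what makes the symmetric two-sided conclusion possible.
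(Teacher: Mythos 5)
Your proof is correct and follows exactly the route the paper intends: the paper derives this lemma solely from properties (1) and (2) of $\alpha_f$ (it states ``By properties (1) and (2) above we get'' without further detail), and your two applications of the linear-character property on $C_G(g)$ and $C_G(h)$, linked by the antisymmetry relation $\alpha_f(g,h)=\alpha_f(h,g)^{-1}$, are precisely the intended bookkeeping. Nothing is missing.
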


When a group $A$ is abelian, $\mathcal{A}_A$ coincides with
$A\times A$ and we can regard $\alpha_f$ as a complex {\it alternating
bicharacter} (see \cite[\S 2.2]{EK13}). Suppose that a finite abelian group is given by
\begin{equation}\label{eq:decompofabeliangroupptok}
A=\langle x_1\rangle\times \langle x_2\rangle \times \ldots \times\langle x_k\rangle\cong
C_{n_1}\times C_{n_2}\times \ldots \times C_{n_k},
\end{equation}
such that $n_i$ divides $n_{i+1}$ for any $1\leq i\leq k-1$.
Then any $[f]\in H^2(A,\C ^*)$ is determined by the
$\frac{k(k-1)}{2}$ values
\begin{equation}\label{formab}
\alpha_f(x_i,x_j)\ \ {1\leq i<j\leq k},
\end{equation}
where $\alpha_f(x_i,x_j)$ is an $n_i$-th root of unity \cite{Schur}.
In this abelian case, the $A$-form
$\alpha_f$ uniquely determines the class $[f]\in H^2(A,\C^*)$.
This is not the case in general, that is, for a finite group $G$, the well-defined map
$[f]\mapsto \alpha_f$ is not necessarily injective (it admits the
Bogomolov multiplier $B_0(G)<H^2(G,\C^*)$ as a kernel \cite{Bogo}).
Nevertheless, we will often use the $G$-form notation rather than
deal with a cohomology class even if it corresponds to more than
one class.
\begin{definition}\label{regularitydef}
Let $f\in Z^2(G,\C^*)$. An element $g\in G$ is {\it $f$-regular} if the $C_G(g)$-linear character \eqref{linchar} is trivial.
\end{definition}
It can easily be verified that if an element is $f$-regular, then so are all its conjugated elements.
\begin{theorem}\label{rayclasses}(see \cite[Theorem 2.4]{Oystaeyen})
Let $f\in Z^2(G,\C^*)$. Then the dimension of the center of the twisted group algebra $\C^fG$ is equal to the number of $f$-regular conjugacy classes in $G$.
\end{theorem}
Theorem \ref{rayclasses} gives another way to say that $f$ is
non-degenerate.
\begin{corollary}\label{regularity}
A cocycle $f\in Z^2(G,\C^*)$ is non-degenerate if and only if there is no nontrivial $f$-regular conjugacy class, that is res$|^G_{C_G(g)}\alpha_f(g,-)=1$ implies $g=e$.
\end{corollary}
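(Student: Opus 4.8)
The plan is to combine the Artin--Wedderburn description \eqref{AW} of $\C^fG$ with Theorem \ref{rayclasses}, and to observe that the identity element is always $f$-regular. First I would record that, by the very definition of non-degeneracy, $f$ is non-degenerate precisely when $|\text{Irr}(G,f)|=1$, i.e.\ when $\C^fG$ is a single matrix algebra. More generally, the center of each simple summand $\text{End}_{\C}(W)$ in \eqref{AW} is one-dimensional (the scalar endomorphisms), and the center of a direct sum of algebras is the direct sum of the centers, so the dimension of the center of $\C^fG$ equals $|\text{Irr}(G,f)|$. Thus $f$ is non-degenerate if and only if $\dim_{\C}Z(\C^fG)=1$.

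Next I would feed this into Theorem \ref{rayclasses}, which identifies $\dim_{\C}Z(\C^fG)$ with the number of $f$-regular conjugacy classes in $G$. Hence $f$ is non-degenerate if and only if $G$ has exactly one $f$-regular conjugacy class.

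The remaining point is to show that this unique class must be $\{e\}$, i.e.\ that $e$ is always $f$-regular. Here I would evaluate the linear character \eqref{linchar} at $g=e$: since $C_G(e)=G$ and $heh^{-1}=e$, one has $\alpha_f(e,h)=f(h,e)\cdot f(e,h)^{-1}$. Choosing a normalized representative of the cohomology class (which affects neither non-degeneracy nor, by property (3) in \S\ref{perct}, the restricted form $\alpha_f$ on the commuting pairs, and $(e,h)$ always lies in $\mathcal{A}_G$) gives $f(e,h)=f(h,e)=1$, so $\alpha_f(e,-)\equiv 1$ and $\{e\}$ is $f$-regular. Consequently the number of $f$-regular classes is always at least one, and it equals one exactly when no nontrivial conjugacy class is $f$-regular, which is the stated criterion res$|^G_{C_G(g)}\alpha_f(g,-)=1\Rightarrow g=e$.

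I would expect no genuine obstacle here, since the corollary is essentially a reformulation of Theorem \ref{rayclasses}. The only mildly delicate bookkeeping is verifying that $e$ is $f$-regular, and for this I would invoke the cohomology-invariance of $\alpha_f|_{\mathcal{A}_G}$ to reduce to a normalized cocycle rather than manipulate an arbitrary representative.
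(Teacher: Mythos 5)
Your argument is correct and is essentially the paper's own (implicit) proof: the paper derives the corollary directly from Theorem \ref{rayclasses} by noting that non-degeneracy means $\dim_\C Z(\C^fG)=1$, i.e.\ exactly one $f$-regular class, which must be $\{e\}$. Your verification that $e$ is always $f$-regular via a normalized representative is the right bookkeeping and fills in the one detail the paper leaves unstated.
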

This non-degeneracy property of the $G$-form $\alpha_f$ on $\mathcal{A}_G$ deserves naming it a
{\it symplectic} group form.
Symplectic forms over finite abelian groups can be described as follows.
\begin{theorem}\label{abct}\cite[Definition 3, Theorem 5]{BSZ},\cite[Corollary 3.10]{ShSh}
Abelian groups of central type are exactly groups of the form $A\times A$,
that is the direct sum of two copies of the same abelian group $A$.
Furthermore, an $A\times A$-form $\alpha_f$ is symplectic if and only if
there is a decomposition \eqref{eq:decompofabeliangroupptok} of $A$ such that
\begin{align}\label{eq:ndcohonab}
\alpha_f((x_i,1),(x_j,1))=\alpha_f((1,x_i),(1,x_j))=1,\ \ \alpha_f((x_i,1),(1,x_j))=\delta_{i,j}\xi _i,
\end{align}
where $\xi_i$ is a root of unity of order $n_i$ and $\delta_{i,j}$ is Kronecker's delta.
\end{theorem}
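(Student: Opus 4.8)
The plan is to prove Theorem \ref{abct} in two separate pieces: first the classification of abelian groups of central type as groups of the form $A\times A$, and then the normal-form statement for symplectic $A\times A$-forms. For the first part, the forward direction is immediate from the general structure theory: if an abelian group $B$ is of central type, then by Lemma \ref{dimprojrep} it admits a non-degenerate cohomology class, hence (by Corollary \ref{regularity}) a symplectic bicharacter $\alpha_f:B\times B\to\C^*$. A non-degenerate alternating bicharacter on a finite abelian group forces $B$ to decompose as an orthogonal direct sum of hyperbolic planes, which pins down $B\cong A\times A$ for some abelian $A$. Conversely, given $B=A\times A$, one exhibits an explicit symplectic form, for instance the obvious pairing between the two copies of $A$, and appeals to Corollary \ref{regularity} to conclude central type. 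In the abelian setting $\alpha_f$ determines $[f]$ uniquely (as noted right before Definition \ref{regularitydef}), so producing the form suffices.

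For the normal-form assertion, I would fix the invariant-factor decomposition \eqref{eq:decompofabeliangroupptok} of $A$ and analyze $\alpha_f$ as an alternating bicharacter on $A\times A$. The heart of the matter is a symplectic-basis argument: a non-degenerate alternating form on a finite abelian group admits a basis splitting the group into mutually orthogonal rank-two blocks, each of which is a cyclic-by-cyclic hyperbolic pair. The plan is to show that one can choose the symplectic basis to be compatible with the given direct-product splitting $A\times A=(A\times 1)\times(1\times A)$, so that $A\times 1$ and $1\times A$ are each totally isotropic (Lagrangian) and the form restricts to the perfect pairing $\alpha_f((x_i,1),(1,x_j))=\delta_{i,j}\xi_i$ with $\xi_i$ of order exactly $n_i$. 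The values in \eqref{formab}, which an $A\times A$-form must realize as $n_i$-th roots of unity, give the bookkeeping: non-degeneracy is precisely the condition that the matrix of the pairing between the two Lagrangian halves is invertible over the relevant ring, and this can be diagonalized into the stated $\delta_{i,j}\xi_i$ shape.

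The main obstacle I expect is the diagonalization step in the presence of differing invariant factors $n_1\mid n_2\mid\cdots\mid n_k$. Unlike the field case, one is doing linear algebra over $\Z/n_i$-type cyclic groups, and the change of basis must respect divisibility so that $\xi_i$ genuinely has order $n_i$ rather than some proper divisor; a naive Gram-Schmidt can destroy the invariant-factor structure. The care needed is to perform the reduction to block-diagonal form using only automorphisms of $A\times A$ that preserve \eqref{eq:decompofabeliangroupptok}, invoking Proposition \ref{prop:equivtw}(2) to absorb such automorphisms into the Aut$(A\times A)$-orbit of the class. Since the statement is quoted from \cite{BSZ} and \cite{ShSh}, I would lean on the structure theory of alternating forms over finite abelian groups rather than reprove it from scratch, and spend the effort verifying that the normalization \eqref{eq:ndcohonab} is achievable within the allowed basis changes — which is exactly where the order constraint on each $\xi_i$ is extracted.
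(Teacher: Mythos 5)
The paper gives no proof of Theorem \ref{abct}: it is imported verbatim from \cite{BSZ} and \cite{ShSh}, so the only comparison available is with the standard argument in those sources. Your overall strategy is the right one and matches it: a non-degenerate alternating bicharacter on a finite abelian group decomposes the group into an orthogonal direct sum of hyperbolic planes $C_n\times C_n$, which simultaneously yields $B\cong A\times A$ and the normal form \eqref{eq:ndcohonab}; the converse is the explicit pairing. The first half of your proposal (central type $\Rightarrow$ symplectic bicharacter $\Rightarrow$ hyperbolic decomposition $\Rightarrow$ $A\times A$, and back) is fine.

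The gap is in your treatment of the normal form: you propose to show that the symplectic basis can be chosen compatibly with the \emph{given} splitting $A\times A=(A\times 1)\times(1\times A)$, so that $A\times 1$ and $1\times A$ are Lagrangian for the given form. That is false. Take $A=C_2\times C_2=\langle x_1\rangle\times\langle x_2\rangle$ and the alternating bicharacter on $A\times A$ with $\alpha((x_1,1),(x_2,1))=\alpha((1,x_1),(1,x_2))=-1$ and $\alpha((x_i,1),(1,x_j))=1$ for all $i,j$: it is non-degenerate, being an orthogonal sum of the two hyperbolic planes $\langle(x_1,1),(x_2,1)\rangle$ and $\langle(1,x_1),(1,x_2)\rangle$, yet $A\times 1$ is not isotropic. (This is exactly the shape of $\alpha_{f_2}$ in Example \ref{example1}, which the paper certifies as non-degenerate ``by \eqref{eq:ndcohonab}''.) The theorem must therefore be read as asserting that \emph{some} identification of the group with $A\times A$ --- some choice of two complementary Lagrangian halves, not the pre-assigned ones --- puts the form in shape \eqref{eq:ndcohonab}, and your proof should construct those halves from the form rather than take them as given. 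Once this is corrected, your worry about diagonalization over $\mathbb{Z}/n_i$ dissolves: the standard greedy argument picks $x$ of maximal order $n$ in what remains of the group, notes that non-degeneracy forces the character $\alpha(x,-)$ to have order exactly $n$, hence produces $y$ with $\alpha(x,y)$ a primitive $n$-th root of unity, splits off $\langle x,y\rangle\cong C_n\times C_n$ orthogonally, and inducts on the orthogonal complement; the exact orders of the $\xi_i$ and the divisibility chain come out automatically, with no Gram--Schmidt over cyclic rings needed.
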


\subsection{Hall subgroups}\label{Hall}
The following notation will be of use.
Let $G$ be a finite group and let $d$ be an integer (usually, but not necessarily, a square-free divisor of the order of $G$).
Let $\hat{d}$ be the minimal divisor of $|G|$, such that $d$ and $\frac{|G|}{\hat{d}}$ are coprime.
Then $\hat{d}$ and $\frac{|G|}{\hat{d}}$ are also coprime.
We denote by $G_d$ and $G_{d'}$ a $\hat{d}$-Hall and a $\frac{|G|}{\hat{d}}$-Hall subgroups of $G$ respectively, if exist.

The Schur-Zassenhaus Theorem will be useful in the sequel.
\begin{theorem}\label{sz} \cite[Theorem 25]{zassenhaus58}
Let $G_d$ be a normal Hall subgroup of $G$. Then there exists a $d^{\shortmid}$-Hall subgroup $G_{d^{\shortmid}}<G$ such that
$$G=G_d\rtimes G_{d^{\shortmid}}.$$
\end{theorem}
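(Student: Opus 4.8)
The plan is to establish the existence of a complement to the normal Hall subgroup $N := G_d$; writing $n = |N|$ and $m = [G:N]$, the hypothesis that $N$ is a Hall subgroup means precisely that $\gcd(n,m) = 1$, and any subgroup $H < G$ with $|H| = m$ will automatically satisfy $H \cap N = 1$ (by Lagrange, since $\gcd(|H|,|N|) = 1$) and $HN = G$ (by counting orders), so that $G = N \rtimes H$ as desired. Thus everything reduces to producing a subgroup of order $m$, and I would proceed by induction on $|G|$, the case $|G| = 1$ being trivial.

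First I would dispose of the case where $N$ is abelian. Here the extension $1 \to N \to G \to G/N \to 1$, with $G/N$ acting on $N$ by conjugation, is classified by a class in $H^2(G/N, N)$, and it splits exactly when this class vanishes. Now $H^2(Q,A)$ is annihilated by $|Q|$ for any finite group $Q$ and $Q$-module $A$; applied with $Q = G/N$ and $A = N$ this shows $H^2(G/N, N)$ is killed by $m$. At the same time the module $N$ is annihilated by $n = |N|$, hence so are the cochains and thus $H^2(G/N,N)$ is killed by $n$ as well. Since $\gcd(m,n) = 1$ we conclude $H^2(G/N,N) = 0$, the extension splits, and a complement of order $m$ exists. (Alternatively one can give an elementary averaging argument on a $2$-cocycle representing the extension, using an inverse of $m$ modulo $n$ to correct a set-theoretic section into a homomorphism.)

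For the inductive step with $N$ nonabelian, I would choose a prime $p \mid n$ and a Sylow $p$-subgroup $P$ of $N$, and split into two cases. If $P$ is not normal in $G$, the Frattini argument gives $G = N \cdot N_G(P)$; then $M := N_G(P)$ is a proper subgroup, $N \cap M$ is normal in $M$ with $[M : N \cap M] = [G:N] = m$ and $|N \cap M| \mid n$ coprime to $m$, so $N \cap M$ is a normal Hall subgroup of $M$ and by induction $M$ contains a subgroup of order $m$, which is the sought complement in $G$. If instead $P$ is normal in $G$, then $Z := Z(P)$ is a nontrivial abelian characteristic subgroup of $P$, hence normal in $G$; applying induction to $G/Z$ yields a subgroup $H/Z$ of order $m$ in $G/Z$, i.e. a subgroup $Z \leq H \leq G$ with $[H:Z] = m$, and applying the abelian base case to the normal Hall subgroup $Z$ of $H$ (note $\gcd(|Z|, m) = 1$) produces a complement of order $m$ inside $H$, completing the step.

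The main obstacle is the abelian base case: everything hinges on the vanishing $H^2(G/N, N) = 0$ forced by coprimality, and one must either invoke the standard annihilation theorem for group cohomology or reproduce the averaging computation with care. The remaining bookkeeping—checking in each branch of the induction that the relevant subgroup or quotient again carries a normal Hall subgroup whose index is still $m$ and whose order is still coprime to $m$—is routine but must be verified to keep the induction honest.
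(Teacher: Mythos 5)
Your proposal is correct: it is the standard proof of the Schur--Zassenhaus existence theorem (abelian base case via the vanishing of $H^2(G/N,N)$ forced by coprimality, then induction using the Frattini argument when a Sylow subgroup of $N$ is non-normal and passage to $G/Z(P)$ when it is normal), and all the bookkeeping steps you flag do check out. The paper itself gives no proof of this statement --- it is quoted directly from Zassenhaus --- so there is nothing to compare beyond noting that your argument is the classical one found in the cited reference.
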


The following three lemmas provide sufficient conditions for the existence of normal Hall subgroups in groups that are of our concern.
\begin{lemma}\label{cor:semi}
Let $G_p$ be an abelian $p$-Sylow subgroup of a finite group $G$. Suppose that
\begin{equation}\label{autcoprimeindex}
gcd(\text{Aut}(G_p),[G:G_p])=1,
\end{equation}
that is the order of Aut$(G_p)$ is prime to the index of
$G_p$ in $G$. Then there exists a $p^{\shortmid}$-Hall subgroup $G_{p'}\lhd G$ such that
$$G=G_{p'}\rtimes G_p.$$

In particular, suppose that $p$ is the
smallest prime divisor of $|G|$, and either
\begin{enumerate}
\item $G_p$ is cyclic, or
\item $G_p$ is an abelian group of rank $2$ and $6$ is not a divisor of $|G|$.
\end{enumerate}
Then $$G=G_{p'}\rtimes G_p.$$
\end{lemma}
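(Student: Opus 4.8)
The plan is to establish the existence of a normal complement to $G_p$ via a transfer-theoretic / Burnside-type argument, and then to derive the two special cases by verifying the hypothesis \eqref{autcoprimeindex} in each. First I would prove the main assertion. The key observation is that the hypothesis $\gcd(|\mathrm{Aut}(G_p)|,[G:G_p])=1$, together with $G_p$ being abelian, lets us invoke Burnside's normal $p$-complement theorem in its strong form: if a Sylow $p$-subgroup $G_p$ is abelian, then $N_G(G_p)/C_G(G_p)$ embeds into $\mathrm{Aut}(G_p)$, so its order divides both $|\mathrm{Aut}(G_p)|$ and $[G:G_p]$, forcing $N_G(G_p)=C_G(G_p)$. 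Burnside's criterion then yields a normal $p^{\shortmid}$-Hall subgroup $G_{p'}\lhd G$, and since $G_p\cap G_{p'}=\{e\}$ and $|G_p|\cdot|G_{p'}|=|G|$, we get the semidirect decomposition $G=G_{p'}\rtimes G_p$. (Alternatively one can quote Schur--Zassenhaus, Theorem \ref{sz}, once normality of $G_{p'}$ is known.)

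For the two special cases I would simply check that \eqref{autcoprimeindex} holds. Write $p$ for the smallest prime divisor of $|G|$, so every other prime divisor $q$ of $|G|$ satisfies $q>p$, and in particular every prime divisor of $[G:G_p]$ is strictly larger than $p$. In case (1), $G_p$ is cyclic of order $p^a$, whence $|\mathrm{Aut}(G_p)|=p^{a-1}(p-1)$. The prime divisors of this number are $p$ together with the prime divisors of $p-1$, all of which are $\le p-1<p$; since every prime dividing $[G:G_p]$ exceeds $p$, the two orders are coprime. In case (2), $G_p$ has rank $2$, so $G_p\cong C_{p^a}\times C_{p^b}$ and $|\mathrm{Aut}(G_p)|$ is a product of a power of $p$ with a factor dividing $(p-1)(p^2-1)=(p-1)^2(p+1)$; thus the primes dividing $|\mathrm{Aut}(G_p)|$ other than $p$ all divide $(p-1)(p+1)$. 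The hypothesis that $6\nmid|G|$ is what handles the troublesome factor $p+1$: the only way a prime $q>p$ could divide $(p-1)(p+1)$ is $q\mid p+1$ with $q>p$, forcing $q=p+1$, which makes $\{p,p+1\}=\{2,3\}$ and hence $6\mid|G|$, contrary to assumption. Therefore no prime divisor of $[G:G_p]$ divides $|\mathrm{Aut}(G_p)|$, and \eqref{autcoprimeindex} holds.

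The main obstacle I anticipate is the careful case (2) bound on the odd part of $|\mathrm{Aut}(C_{p^a}\times C_{p^b})|$ and the elimination of the exceptional prime via the $6\nmid|G|$ condition; one must argue that a prime $q>p$ dividing the automorphism order can only arise from the $p+1$ factor and then pin down $\{p,p+1\}=\{2,3\}$. Once this numerical check is in place, both special cases reduce immediately to the general statement, and the decomposition $G=G_{p'}\rtimes G_p$ follows. I would present the general criterion first, then dispatch (1) and (2) as short verifications of its hypothesis.
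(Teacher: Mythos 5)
Your proposal is correct and follows essentially the same route as the paper: both reduce the general statement to Burnside's normal complement theorem by showing that the conjugation action of $\mathcal{N}_G(G_p)$ on the abelian Sylow subgroup $G_p$ is trivial under the coprimality hypothesis \eqref{autcoprimeindex} (your $N/C$ packaging is just a streamlined version of the paper's element-by-element decomposition into $p$-part and $p'$-part), and both special cases are dispatched by the same numerical checks on $|\mathrm{Aut}(G_p)|$, including the observation that a prime $q>p$ dividing $p+1$ forces $\{p,q\}=\{2,3\}$ and hence $6\mid |G|$.
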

\begin{proof}
We show that $G_p$ is contained in the center of its normalizer, that is
\begin{equation}\label{BNC}
G_p\leq Z(\mathcal{N}_G(G_p)).
\end{equation}
By ``Burnside's Normal Complement Theorem" (see \cite[Theorem 7.50]{rotman}) this condition implies that
$G_p$ has a normal complement in $G$.
Since $G_p$ is abelian, the condition \eqref{BNC} holds if and only if the natural homomorphism
$$\begin{array}{rcl}
\phi:\mathcal{N}_G(G_p) &\rightarrow &\text{Aut}(G_p)\\
\phi(c)(g)&=&cgc^{-1},\ \ c\in\mathcal{N}_G(G_p), g\in G_p
\end{array}$$
is trivial. Indeed, let $c\in \mathcal{N}_G(G_p)$ be an element of the normalizer of order $p^j\cdot r$,
where $(p,r)=1$. Then there exist $\alpha,\beta\in \mathbb{Z}$ such that $\alpha \cdot r+\beta \cdot p^j=1$. Hence $c=ab$,
where $a:=c^{\alpha \cdot r}$ is of order $p^j$ and $b:=c^{\beta \cdot p^j}$ is of order prime to $p$.
Since $a$ and $b$ are powers of $c$, both lie in $\mathcal{N}_G(G_p)$. We show that they belong to the kernel of $\phi$, proving that so does $c$.
First, an element whose order is a power of $p$, and which normalizes a $p$-Sylow subgroup $G_p$ lies in $G_p$ (see \cite[Lemma 4.11]{rotman}). This proves that $a\in G_p<$ker$(\phi)$.
Next, the order of $\phi(b)$ divides the order of $b$ which is prime to $p$, and hence divides the index $[G:G_p]$. By the hypothesis \eqref{autcoprimeindex}, it is also prime to the order of Aut$(G_p)$
implying that $\phi(b)$ is trivial.

To prove the claim about the smallest prime divisor of $|G|$, we show that \eqref{autcoprimeindex} holds in both cases (1) and (2). Indeed,
\begin{enumerate}
\item If $G_p$ is cyclic of order $p^n$ then $|$Aut$(G_p)|=p^{n-1}(p-1)$.
Since $p$ is the smallest prime factor of $|G|$, no other factor divides $p-1$ yielding \eqref{autcoprimeindex}.
\item If $P$ is an abelian group of rank $2$, then (see, e.g. \cite{MR2411245}) there exist $i_1,i_2,i_3\in \mathbb{N}$ such that
\begin{equation}\label{eq:rank2}
|\text{Aut}(G_p)|=(p-1)^{i_1}p^{i_2}(p+1)^{i_3}.
\end{equation}
\end{enumerate}
Since $p$ is the smallest prime factor of $|G|$, the factors of $G$ do not divide $p-1$. Moreover, since $6$ is not a divisor of $|G|$, these factors also do not divide $p+1$ and we are done.
\end{proof}
\begin{remark}
If $G_p=C_{p^m}\times C_{p^l}$, where $m\neq l$ then $i_3= 0$ in~\eqref{eq:rank2} (see \cite{MR2411245}).
In these cases we can omit the condition that $6$ is not a divisor of $|G|$ in Lemma \ref{cor:semi}(2).
\end{remark}
\begin{lemma}\label{cor:sdpsdp}
Let $p<q$ be primes and let $G$ be a group of order dividing $p^2q^2$.

\begin{enumerate}
\item If $pq\neq 6$ then $G=G_q\rtimes G_p.$
\item If $|G|$ divides $36$ then either $G=G_2\rtimes G_3$ or $G=G_3\rtimes G_2.$
\end{enumerate}
\end{lemma}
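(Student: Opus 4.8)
The plan is to handle the two cases by reducing them to the existence of a normal Sylow subgroup, after which the Schur--Zassenhaus Theorem (Theorem \ref{sz}) provides the desired semidirect decomposition. Since $|G|$ divides $p^2q^2$ with $p<q$, the group $G$ has a $q$-Sylow subgroup $G_q$ and a $p$-Sylow subgroup $G_p$, each of order dividing $q^2$ and $p^2$ respectively; these are Hall subgroups because $p$ and $q$ are distinct primes. The whole matter therefore rests on showing that one of these Sylow subgroups is normal in $G$, because once a Sylow subgroup is both normal and Hall, Theorem \ref{sz} yields a complement and hence the internal semidirect product.

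For part (1), I would invoke Lemma \ref{cor:semi}, applied with the \emph{smallest} prime $p$. The $p$-Sylow subgroup $G_p$ has order dividing $p^2$, so it is abelian (every group of order $p$ or $p^2$ is abelian), and it has rank at most $2$. The hypothesis $pq\neq 6$ means that $6$ does not divide $|G|$ in the relevant borderline situation, which is exactly what cases (1) and (2) of Lemma \ref{cor:semi} require; one checks that the order of $\mathrm{Aut}(G_p)$, given by a product of powers of $p-1$, $p$, and $p+1$ via \eqref{eq:rank2}, is coprime to the index $[G:G_p]$, whose only prime factor besides $p$ is $q$. Here $q$ cannot divide $p-1$ since $q>p$, so the only danger is $q\mid p+1$; excluding $pq=6$ (i.e. $p=2,q=3$, where $q=3=p+1$) removes precisely this obstruction. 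Lemma \ref{cor:semi} then produces a normal $p'$-Hall subgroup $G_{p'}=G_q$, giving $G=G_q\rtimes G_p$.

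For part (2), where $|G|$ divides $36=2^2\cdot 3^2$, the exceptional case $pq=6$ is in force, so Lemma \ref{cor:semi} no longer applies directly and we must argue that at least one Sylow subgroup is normal by a direct Sylow-counting argument. The plan is to use Sylow's third theorem: the number $n_3$ of $3$-Sylow subgroups satisfies $n_3\equiv 1\pmod 3$ and $n_3\mid 4$, forcing $n_3\in\{1,4\}$; similarly $n_2\equiv 1\pmod 2$ and $n_2\mid 9$, forcing $n_2\in\{1,3,9\}$. If $n_3=1$ then $G_3\lhd G$ and Theorem \ref{sz} gives $G=G_3\rtimes G_2$. If instead $n_3=4$, the four $3$-Sylow subgroups account for many elements of order a power of $3$, and a counting argument on the elements they contribute should force $n_2=1$, whence $G_2\lhd G$ and $G=G_2\rtimes G_3$. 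The main obstacle will be making this last counting step airtight across all orders dividing $36$ (in particular handling the case $|G|=36$ where the intersections of distinct $3$-Sylows of order $9$ may be nontrivial); I expect to treat the subcase $|G|=36$ separately, noting that four $3$-Sylows of order $9$ pairwise intersecting in a common subgroup of order $3$ leave room for exactly one $2$-Sylow, so that either way one Sylow subgroup is normal and the claimed dichotomy holds.
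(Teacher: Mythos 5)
Your part (1) is exactly the paper's argument: apply Lemma \ref{cor:semi} to the smallest prime $p$, observe that $q>p$ divides neither $p-1$ nor (once $pq\neq 6$ is excluded) $p+1$, and conclude that the $p^{\shortmid}$-Hall subgroup $G_{p^{\shortmid}}=G_q$ is normal, whence $G=G_q\rtimes G_p$. The gap is in part (2), in the subcase $|G|=36$ with $n_3=4$. Your structural observation there is correct: the conjugation action on the four $3$-Sylows has kernel $K=\bigcap_i P_i$ of order $3$, the pairwise intersections all equal $K$, and the union of the $3$-Sylows has $4\cdot 6+3=27$ elements, leaving $9$ elements of even order. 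But the conclusion that this configuration ``leaves room for exactly one $2$-Sylow'' is not a valid counting argument: a $2$-Sylow contributes only $3$ nontrivial elements, so three $2$-Sylows sharing a common subgroup of order $2$ would occupy just $7$ of the $9$ remaining elements (and the other two could a priori have order $6$, $12$, $18$ or $36$, so they need not lie in any $2$-Sylow). Counting rules out $n_2=9$ but not $n_2=3$, so the dichotomy is not established in this subcase.

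The paper closes exactly this hole with Burnside's Normal Complement Theorem, the same tool already used in the proof of Lemma \ref{cor:semi}: if $n_3=4$ then $N_G(G_3)=G_3$ (it has index $4$, hence order $9$), which is abelian, so $G_3\leq Z(N_G(G_3))$ and $G$ has a normal $3$-complement; for $|G|=36$ (and likewise $|G|=12$) that complement has order $4$ and is therefore a normal $2$-Sylow, giving $G=G_2\rtimes G_3$ with no element counting at all. Replacing your counting step by this transfer argument recovers the intended proof. Alternatively, you could salvage your route by noting that $G/K$ is a transitive subgroup of $S_4$ of order $12$, hence $A_4$, and that the preimage of its Klein subgroup is a normal subgroup of order $12$ isomorphic to $C_3\times V_4$, whose $2$-Sylow is characteristic in it and hence normal in $G$; but this is considerably more work than the one-line appeal to Burnside.
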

\begin{proof}
The first part is a direct consequence of Lemma \ref{cor:semi}.
The case where $p=2$ and $q=3$ follows from Sylow's theorems together with Burnside's Normal Complement Theorem. It is left to the reader.
\end{proof}
Recall that by Hall's main theorem, a finite group is solvable if and only if it admits all possible Hall subgroups (see \cite[Theorem 5.28]{rotman}).
\begin{lemma}\label{prop:6primehall}
Let $G$ be a solvable group whose 2-Sylow and 3-Sylow subgroups are abelian of rank $\leq 2$.
Then $G$ admits a normal $6^{\shortmid}$-Hall subgroup. If, additionally, the order of $G_6$ divides 36,
then $G$ admits either a normal $2^{\shortmid}$-Hall subgroup or a normal $3^{\shortmid}$-Hall subgroup.
\end{lemma}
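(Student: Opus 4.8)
The plan is to treat the two assertions in turn. Throughout put $\pi=\{2,3\}$, so that a normal $6^{\shortmid}$-Hall subgroup is exactly a normal $\pi$-complement; recall also that a normal Hall subgroup is the unique subgroup of its order and hence characteristic. For the first assertion everything rests on the following reduction claim: \emph{if some prime $q\geq 5$ divides $|G|$, then $O_q(G)\neq 1$ for at least one such $q$.} Granting this, I would induct on $|G|$. When $|G|$ is a $\pi$-group the $6^{\shortmid}$-Hall subgroup is trivial, so there is nothing to prove. Otherwise choose $q\geq 5$ with $O_q(G)\neq 1$ and let $N\leq O_q(G)$ be a minimal normal subgroup of $G$; since $G$ is solvable, $N$ is an elementary abelian $q$-group. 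As $q\notin\pi$, the Sylow $2$- and $3$-subgroups of $G/N$ are isomorphic to those of $G$ and so remain abelian of rank $\leq 2$, and $G/N$ stays solvable; by the inductive hypothesis $G/N$ has a normal $6^{\shortmid}$-Hall subgroup $H/N$ with $H\lhd G$. Counting orders, $|H|=|N|\cdot|H/N|$ equals the $6^{\shortmid}$-part of $|G|$ (as $N$ is a $q$-group with $q\geq 5$), so $H$ is itself a $6^{\shortmid}$-group and is the required normal $6^{\shortmid}$-Hall subgroup.

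The main obstacle is the reduction claim, and this is the one place where the rank hypothesis is essential. I would establish the contrapositive: assume $O_q(G)=1$ for every prime $q\geq 5$, so that the Fitting subgroup is $F(G)=O_2(G)\times O_3(G)$, a $\pi$-group. Each of $O_2(G)$ and $O_3(G)$ is a subgroup of an abelian Sylow subgroup of rank $\leq 2$, hence is abelian of rank $\leq 2$; by the automorphism-order formula \eqref{eq:rank2} together with its cyclic counterpart, both $\mathrm{Aut}(O_2(G))$ and $\mathrm{Aut}(O_3(G))$ are $\pi$-groups. Since $O_2(G)$ and $O_3(G)$ have coprime orders, $\mathrm{Aut}(F(G))\cong\mathrm{Aut}(O_2(G))\times\mathrm{Aut}(O_3(G))$ is a $\pi$-group as well. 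Now solvability gives $C_G(F(G))\leq F(G)$, whence $G/C_G(F(G))$ embeds into $\mathrm{Aut}(F(G))$ and is a $\pi$-group, while $C_G(F(G))\leq F(G)$ is a $\pi$-group; therefore $|G|$ is a $\pi$-number, contradicting the presence of $q\geq 5$. This proves the claim and completes the first assertion.

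For the second assertion I would feed the first one into Schur--Zassenhaus (Theorem \ref{sz}): the normal $6^{\shortmid}$-Hall subgroup $G_{6^{\shortmid}}$ splits off, so $\overline{G}:=G/G_{6^{\shortmid}}\cong G_6$ is a $\pi$-group of order dividing $36$. Applying Lemma \ref{cor:sdpsdp}(2) to $\overline G$ yields that $\overline G$ has either a normal $3$-Sylow or a normal $2$-Sylow, i.e. either a normal $2^{\shortmid}$-Hall subgroup or a normal $3^{\shortmid}$-Hall subgroup. Pulling the relevant one back along the projection $G\to\overline G$ gives a normal subgroup of $G$, and the only thing to verify is that its order is the full $2^{\shortmid}$-part (respectively $3^{\shortmid}$-part) of $|G|$; this is immediate from $|G|=|G_{6^{\shortmid}}|\cdot|G|_2\cdot|G|_3$. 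Hence $G$ has a normal $2^{\shortmid}$-Hall or a normal $3^{\shortmid}$-Hall subgroup, as claimed.
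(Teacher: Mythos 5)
Your proposal is correct, and for the first assertion it takes a genuinely different route from the paper's. The paper argues bottom-up: it applies Burnside's Normal Complement Theorem (packaged as Lemma \ref{cor:semi}) inside a $2^{\shortmid}$-Hall and a $3^{\shortmid}$-Hall subgroup to manufacture two $6^{\shortmid}$-Hall subgroups of $G$ that are normal in $G_{2^{\shortmid}}$ and $G_{3^{\shortmid}}$ respectively, and then invokes the conjugacy of Hall subgroups in solvable groups to show that the normalizer of one of them contains conjugates of both $G_{2^{\shortmid}}$ and $G_{3^{\shortmid}}$, hence equals $G$. You instead argue top-down by induction on $|G|$, passing to $G/N$ for a minimal normal $q$-subgroup with $q\geq 5$; the existence of such an $N$ is secured by your Fitting-subgroup computation, where the rank hypothesis enters through the same arithmetic fact \eqref{eq:rank2} that drives the paper's Lemma \ref{cor:semi}, but via the self-centralizing property $C_G(F(G))\leq F(G)$ of solvable groups rather than via transfer. (All the individual steps check out: $O_2(G)$ and $O_3(G)$ inherit ``abelian of rank $\leq 2$'' from the Sylow subgroups, their automorphism groups are $\{2,3\}$-groups since the primes involved divide $p(p-1)(p+1)$ for $p\in\{2,3\}$, and the Sylow $2$- and $3$-subgroups survive unchanged in $G/N$.) Your version buys independence from Burnside's theorem and from Hall conjugacy at the cost of the Fitting-subgroup machinery; both are clean. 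For the second assertion the two arguments coincide: split off $G_{6^{\shortmid}}$ by Schur--Zassenhaus (Theorem \ref{sz}) and apply Lemma \ref{cor:sdpsdp}(2) to the quotient of order dividing $36$, pulling the resulting normal Sylow subgroup back to $G$.
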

\begin{proof}
Let $G_{2^{\shortmid}}$ be a $2^{\shortmid}$-Hall subgroup of $G$.
By Lemma \ref{cor:semi}, a $3$-Sylow subgroup $G_3\leq G_{2^{\shortmid}}$ admits a normal complement.
Clearly, this normal complement is a $6^{\shortmid}$-Hall subgroup $G_{6^{\shortmid}}$ of $G$.
Similarly, a $2$-Sylow subgroup $G_2\leq G_{3^{\shortmid}}$ also admits a normal complement.
Again, this normal complement is a $6^{\shortmid}$-Hall subgroup $\tilde{G_{6^{\shortmid}}}$ of $G$.
By Hall's Theorem (see \cite[Theorem 5.28]{rotman}) $G_{6^{\shortmid}}$ and $\tilde{G_{6^{\shortmid}}}$ are conjugated in $G$.
Then, there exists an element $g\in G$ such that $G_{6^{\shortmid}}=g^{-1}\tilde{G_{6^{\shortmid}}}g$ and therefore
$g^{-1}G_{3^{\shortmid}}g$
is a $3^{\shortmid}$-Hall subgroup with a normal subgroup $G_{6^{\shortmid}}$.
Consequently, the normalizer of $G_{6^{\shortmid}}$ in $G$ contains both $g^{-1}G_{3^{\shortmid}}g$ and $G_{2^{\shortmid}}$.
Hence, the index $[G:\mathcal{N}_G(G_{6^{\shortmid}})]$ divides a power of $2$ and a power of $3$.
We conclude that $\mathcal{N}_G(G_{6^{\shortmid}})=G$ and hence $G_{6^{\shortmid}}$ is normal in $G$ (proving the first part of the claim)
yielding an onto morphism $\mu:G\to G_6$.

Suppose now that the order of $G_6$ divides 36. Then by Lemma \ref{cor:sdpsdp}(2), there is either an onto morphism $\mu_3:G_6\to G_3$
such that $$G_{3'}=\ker(\mu_3\circ\mu),$$ or an onto morphism $\mu_2:G_6\to G_2$ with $$G_{2'}=\ker(\mu_2\circ\mu).$$
This proves that at least one of these Hall subgroups is normal in $G$.
\end{proof}

Groups of central type are solvable \cite{isaacs,LY}
and as such admit all the possible Hall subgroups.
The following theorem will be useful in the sequel.
\begin{theorem}\label{th:hall}(see \cite[Corollary 4]{DeMeyer})
Let $G$ be a finite group and let $[f]\in H^2(G,\mathbb{C}^*)$. Then the following are equivalent.
\begin{enumerate}
\item The cohomology class $[f]$ is non-degenerate.
\item The restriction of $[f]$ to any Hall subgroup of
$G$ is non-degenerate.
\item The restriction of $[f]$ to any Sylow subgroup of
$G$ is non-degenerate.
\end{enumerate}
In particular, any Hall subgroup of a group of central type is itself of central type.
\end{theorem}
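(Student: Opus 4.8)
The plan is to prove the equivalence through the cycle $(2)\Rightarrow(3)\Rightarrow(1)\Rightarrow(2)$, the two substantive ingredients being a divisibility argument built on restricting the unique simple module and a local analysis of the alternating form $\alpha_f$ through the $f$-regular elements of Corollary \ref{regularity}. The implication $(2)\Rightarrow(3)$ is immediate, since every Sylow subgroup is in particular a Hall subgroup, so non-degeneracy on all Hall subgroups subsumes non-degeneracy on all Sylow subgroups.

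For $(3)\Rightarrow(1)$ I would argue as follows. Write $|G|=\prod_p p^{a_p}$. For each prime $p$, non-degeneracy of $f|_P$ on a Sylow $p$-subgroup $P$ forces $|P|=p^{a_p}$ to be a square and, by Lemma \ref{dimprojrep} together with \eqref{AW}, gives $\C^{f|_P}P\cong M_{p^{a_p/2}}(\C)$. Now fix any $[W]\in\mathrm{Irr}(G,f)$; its restriction $W|_P$ is a module over $\C^{f|_P}P\cong M_{p^{a_p/2}}(\C)$, and every module over a complex matrix algebra $M_d(\C)$ has dimension divisible by $d$, so $p^{a_p/2}\mid\dim_{\C}W$. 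Letting $p$ range over all primes yields $\sqrt{|G|}=\prod_p p^{a_p/2}\mid\dim_{\C}W$. Since $(\dim_{\C}W)^2\leq\sum_{[W']\in\mathrm{Irr}(G,f)}(\dim_{\C}W')^2=|G|$ we get $\dim_{\C}W=\sqrt{|G|}$, and the sum-of-squares identity then forces a single summand in \eqref{AW}, i.e. $f$ is non-degenerate. This direction requires neither solvability nor the existence of Hall subgroups.

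Next I would reduce $(1)\Rightarrow(2)$ to the single implication $(1)\Rightarrow(3)$. Indeed, if $H$ is a $\pi$-Hall subgroup of $G$, then the Sylow subgroups of $H$ are exactly certain Sylow subgroups of $G$; granting $(1)\Rightarrow(3)$ for $G$, all of these are non-degenerate for the restricted class, and applying the already-proven $(3)\Rightarrow(1)$ to $H$ shows $f|_H$ is non-degenerate. It therefore remains to prove $(1)\Rightarrow(3)$. Suppose $f$ is non-degenerate on $G$ but $f|_P$ is degenerate on a Sylow $p$-subgroup $P$. By Corollary \ref{regularity} applied inside $P$ there is $e\neq g\in P$ with $\alpha_f(g,h)=1$ for all $h\in C_P(g)$. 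The linear character $\chi_g\colon C_G(g)\to\C^*,\ h\mapsto\alpha_f(g,h)$ takes values that are $p$-power roots of unity by Lemma \ref{altformorder}, so $\chi_g$ has $p$-power order and is trivial if and only if its restriction to a Sylow $p$-subgroup of $C_G(g)$ is trivial. When $g\in Z(P)$ one has $P\leq C_G(g)$, so $P$ is itself a Sylow $p$-subgroup of $C_G(g)$ and $C_P(g)=P$; then $\chi_g|_P=1$ forces $\chi_g=1$, contradicting non-degeneracy of $f$ on $G$, which by Corollary \ref{regularity} says $g$ is not $f$-regular. Thus no nontrivial central element of $P$ is $(f|_P)$-regular.

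The hard part will be upgrading this from central witnesses to an \emph{arbitrary} $(f|_P)$-regular element $g\in P$: for a general $g$ the subgroup $C_P(g)$ need not be a Sylow $p$-subgroup of the larger centralizer $C_G(g)$, so triviality of $\chi_g$ on $C_P(g)$ fails to force triviality of $\chi_g$, and the clean contradiction above breaks down. I expect to close this gap in one of two ways. The first is to show that degeneracy of a $p$-group cocycle already produces a nontrivial $(f|_P)$-regular element in $Z(P)$, reducing the general case to the central case just handled; this is the most economical route if it can be established via Theorem \ref{rayclasses} and the structure of $Z(\C^{f|_P}P)$. The second, following DeMeyer, is an induction on $|G|$: a group carrying a non-degenerate cocycle is of central type, hence solvable, so one may peel off a normal Hall subgroup via Schur--Zassenhaus (Theorem \ref{sz}) and run Clifford theory for twisted group algebras across the resulting split extension to transport non-degeneracy between $G$ and its Sylow subgroups. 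Once the equivalence is in place, the final assertion is immediate: if $G$ is of central type with non-degenerate $[f]$, then by $(1)\Rightarrow(2)$ the restriction $[f]|_H$ is non-degenerate on every Hall subgroup $H$, so $H$ is of central type.
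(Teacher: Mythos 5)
Your implications $(2)\Rightarrow(3)$ and $(3)\Rightarrow(1)$ are correct and complete: the restriction of any $[W]\in\mathrm{Irr}(G,f)$ to a Sylow $p$-subgroup $P$ is a module over $\C^{f|_P}P\cong M_{p^{a_p/2}}(\C)$, so $\sqrt{|G|}\mid\dim_\C W$, and the sum-of-squares identity in \eqref{AW} finishes it. The reduction of $(1)\Rightarrow(2)$ to $(1)\Rightarrow(3)$ is also fine. But $(1)\Rightarrow(3)$ is where the content lies, and you do not prove it: your $\alpha_f$-argument only rules out $(f|_P)$-regular witnesses lying in $Z(P)$, and you explicitly defer the general case to two unexecuted strategies. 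That is a genuine gap, and your first proposed repair (``degeneracy of a $p$-group cocycle produces a nontrivial regular element in the center'') is not a small lemma one can wave at --- it is essentially the $p$-group case of the DeMeyer--Janusz theorem itself, so invoking it is close to circular; your second repair (induction via Schur--Zassenhaus and Clifford theory) presupposes solvability of $G$, which at that stage of the argument you only know because groups of central type are solvable, a nontrivial external input.

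The paper does not prove Theorem \ref{th:hall}; it cites \cite[Corollary 4]{DeMeyer}, and the intended argument (attributed to \cite[Lemma 2.7]{Passman2010}) is visible in the paper's own proof of Theorem \ref{sylowdoublect}. That argument closes your gap in a few lines using exactly the multiplicity trick you already deployed for $(3)\Rightarrow(1)$, but applied to the regular module instead of an irreducible: if $[f]$ is non-degenerate then $\C^fG\cong W^{\sqrt{|G|}}$, while $\C^fG$ is free of rank $[G:H]$ over $\C^{f|_H}H$ for any Hall subgroup $H$; hence every $[V]\in\mathrm{Irr}(H,f|_H)$ occurs in $\C^fG|_H$ with multiplicity $\dim_\C(V)\cdot[G:H]$, which must equal $\sqrt{|G|}$ times the multiplicity of $V$ in $W|_H$. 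Extracting the $|H|$-part (legitimate since $\gcd(|H|,[G:H])=1$) gives $\sqrt{|H|}\mid\dim_\C V$, and $\dim_\C(V)^2\le|H|$ forces equality, i.e.\ $f|_H$ is non-degenerate. This proves $(1)\Rightarrow(2)$ directly for all Hall subgroups, needs no solvability and no analysis of $f$-regular elements, and would let you delete both the central-witness discussion and the two speculative strategies.
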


\subsection{The automorphism action on non-degenerate cohomology classes}\label{autact}
\begin{definition}
Let $H$ be a group and let $B$ be a subset of Aut$(H)$.
Denote the subgroup of $H^2(H,\mathbb{C}^*)$ consisting of
the $B$-invariant elements under the $B$-action~\eqref{eq:action}
by \textbf{$H^2(H,\mathbb{C}^*)^B$}. If
$\eta:G\rightarrow$Aut$(H)$ is an action of a group $G$ on
$H$, then we write
$$H^2(H,\mathbb{C}^*)^G:=H^2(H,\mathbb{C}^*)^{\eta(G)}.$$
\end{definition}
It is well known that if $B$ is a subset of the inner automorphisms of $H$ then
$$H^2(H,\mathbb{C}^*)^B = H^2(H,\mathbb{C}^*).$$
In other words, the action of Aut$(H)$ on $H^2(H,\mathbb{C}^*)$ is via the outer automorphisms of $H$.

Let $H$ be a subgroup of a group $G$. Then the normalizer
$\mathcal{N}_G(H)$ acts by conjugation on $H$.
By the above, the action of $\mathcal{N}_G(H)$ on
$H^2(H,\mathbb{C}^*)$ is via
\begin{equation}\label{weyl}
W(H):=\mathcal{N}_G(H)/H.
\end{equation}
The image of the restriction
$$\text{res}^G_H:H^2(G,\C^*)\to H^2(H,\C^*)$$
is contained in the subgroup $H^2(H,\mathbb{C}^*)^{W(H)}<H^2(H,\mathbb{C}^*)$ of elements which are fixed under the action
of this {\it Weyl group} $W(H)$ \cite[\S II.3]{AM04}.
When $H=G_p$ is a $p$-Sylow subgroup of $G$, the image of res$^G_{G_p}$ may coincide with these invariants,
as suggested by the following result, which is attributed to R.G. Swan.
\begin{theorem}\label{thmswan}\cite[Lemma 1]{swan60}
Let $G_p$ be an abelian $p$-Sylow subgroup of $G$. Then
$$Im(res_{G_p}^G)=H^2(G_p,\mathbb{C}^*)^{W(G_p)}.$$
\end{theorem}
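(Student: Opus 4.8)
The plan is to identify $\text{Im}(\text{res}^G_{G_p})$ with a concrete subgroup of $H^2(G_p,\C^*)$ and then show that this subgroup is exactly the $W(G_p)$-invariants. Since $G_p$ is a Sylow $p$-subgroup and $H^2(G_p,\C^*)$ is a finite $p$-group, I would invoke the Cartan--Eilenberg theorem on stable elements: $\text{res}^G_{G_p}$ carries the $p$-primary part of $H^2(G,\C^*)$ isomorphically onto the group $\mathrm{St}(G_p)$ of \emph{stable} classes, while the prime-to-$p$ part restricts trivially to the $p$-group $H^2(G_p,\C^*)$; hence $\text{Im}(\text{res}^G_{G_p})=\mathrm{St}(G_p)$. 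Here $u\in H^2(G_p,\C^*)$ is \emph{stable} if, for every $g\in G$, one has $\text{res}^{G_p}_{T_g}(u)=c_g\,\text{res}^{G_p}_{gT_gg^{-1}}(u)$, where $T_g:=G_p\cap g^{-1}G_pg$ and $c_g$ denotes the conjugation isomorphism on cohomology induced by $g$. It therefore suffices to prove $\mathrm{St}(G_p)=H^2(G_p,\C^*)^{W(G_p)}$.

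The inclusion $\mathrm{St}(G_p)\subseteq H^2(G_p,\C^*)^{W(G_p)}$ is the containment already recorded just before the statement: taking $g\in\mathcal{N}_G(G_p)$ forces $T_g=G_p$, and the stability identity degenerates into invariance of $u$ under conjugation by $g$. For the reverse inclusion the key tool is Burnside's fusion theorem in the form it takes for an abelian Sylow subgroup: if $A\le G_p$ and $gAg^{-1}\le G_p$ for some $g\in G$, then there exists $n\in\mathcal{N}_G(G_p)$ with $nan^{-1}=gag^{-1}$ for all $a\in A$. I would include its short proof, which is exactly where abelianness enters: since $G_p$ is abelian, both $G_p$ and $gG_pg^{-1}$ centralize $gAg^{-1}$, hence are Sylow $p$-subgroups of $C:=C_G(gAg^{-1})$; choosing $c\in C$ with $(cg)G_p(cg)^{-1}=G_p$ gives $n:=cg\in\mathcal{N}_G(G_p)$, and $nan^{-1}=c(gag^{-1})c^{-1}=gag^{-1}$ because $c$ centralizes $gAg^{-1}$.

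To finish, I would fix $u\in H^2(G_p,\C^*)^{W(G_p)}$ and $g\in G$ and apply the fusion statement to $A=T_g$ (noting $gT_gg^{-1}=G_p\cap gG_pg^{-1}\le G_p$). This produces $n\in\mathcal{N}_G(G_p)$ agreeing with $g$ on $T_g$, so that $n$ and $g$ induce the \emph{same} conjugation isomorphism $gT_gg^{-1}\to T_g$ on cohomology and $nT_gn^{-1}=gT_gg^{-1}$. Let $c_n$ be the automorphism of $G_p$ given by conjugation by $n^{-1}$, which carries $gT_gg^{-1}$ onto $T_g$. Naturality of restriction yields $\text{res}^{G_p}_{T_g}\circ c_n=c_n\circ\text{res}^{G_p}_{gT_gg^{-1}}$ on $H^2(G_p,\C^*)$; evaluating at $u$, using $c_n(u)=u$ (the $W(G_p)$-invariance) and the fact that $c_n$ equals $c_g$ on the relevant intersections, reproduces the stability identity $\text{res}^{G_p}_{T_g}(u)=c_g\,\text{res}^{G_p}_{gT_gg^{-1}}(u)$. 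Thus $u$ is stable and lies in the image, giving equality. I expect the main obstacle to be the bookkeeping of the three compatible conjugation and restriction maps, namely verifying that replacing the arbitrary $g\in G$ by the normalizer element $n$ leaves every induced map on $T_g$ unchanged; once this matching is pinned down, the cohomological input (Cartan--Eilenberg) and the group-theoretic input (Burnside fusion, valid precisely because $G_p$ is abelian) are standard.
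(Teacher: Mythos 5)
The paper does not actually prove this statement: it is quoted, with attribution, from Swan \cite[Lemma 1]{swan60}, so there is no internal proof to compare against. Your argument --- the Cartan--Eilenberg stable-element theorem to identify $\mathrm{Im}(\mathrm{res}^G_{G_p})$ with the group of stable classes (using that $H^2(G_p,\C^*)$ is a finite $p$-group, so the prime-to-$p$ part of $H^2(G,\C^*)$ restricts trivially), followed by Burnside's fusion theorem for an abelian Sylow subgroup to identify the stable classes with the $W(G_p)$-invariants --- is correct, complete in all essential steps, and is in substance the proof given in the cited source. The only point requiring care is the one you flag yourself: the conjugation maps $c_g$ and $c_n$ on $H^2$ must be induced by the \emph{same} isomorphism $T_g\to gT_gg^{-1}$, which is exactly what the Burnside fusion statement delivers, and then naturality of restriction together with $c_n(u)=u$ closes the computation.
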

When $N\lhd G$ is normal, then evidently  ${W(N)=G/N}$.
If, in addition, $G=N\rtimes K$ is finite and $\left(|N|,|K|\right)=1$, then
(see  \cite[Corollary 2.2.6]{karpilovsky1})
\begin{equation}\label{eq:semdircoh}
H^2(G,\mathbb{C}^*)=H^2(N,\mathbb{C}^*)^K\times H^2(K,\mathbb{C}^*).
\end{equation}

We are particularly interested in the restriction of the action \eqref{eq:action} to the subset of non-degenerate cohomology classes of $G$.
This subset is indeed Aut$(G)$-stable.
\begin{proposition}\label{stable}(see \cite[\S 4]{aljadeff2})
Let $[f]\in H^2(G, \C^*)$ and let $\phi\in$Aut$(G)$. Then $[f]$ is non-degenerate if and only if so is $\phi([f])$.
\end{proposition}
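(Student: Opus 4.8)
The plan is to exhibit an explicit $\C$-algebra isomorphism between the twisted group algebras $\C^f G$ and $\C^{\phi(f)}G$, and then read off non-degeneracy from their Artin--Wedderburn decompositions. Since non-degeneracy is a cohomology class property, it suffices to argue at the level of cocycle representatives.

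First I would fix a basis $\{u_g\}_{g\in G}$ of $\C^f G$ with $u_{g_1}u_{g_2}=f(g_1,g_2)u_{g_1g_2}$ and a basis $\{u'_h\}_{h\in G}$ of $\C^{\phi(f)}G$ with $u'_{h_1}u'_{h_2}=\phi(f)(h_1,h_2)u'_{h_1h_2}=f(\phi^{-1}(h_1),\phi^{-1}(h_2))u'_{h_1h_2}$, using the definition \eqref{eq:action}. Define the linear map $\Psi\colon \C^f G\to \C^{\phi(f)}G$ by $\Psi(u_g)=u'_{\phi(g)}$ and extend linearly; since $\phi$ is a bijection of $G$, $\Psi$ is a linear isomorphism. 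The key computation is multiplicativity: on one hand $\Psi(u_{g_1}u_{g_2})=f(g_1,g_2)\,u'_{\phi(g_1)\phi(g_2)}$, and on the other hand $\Psi(u_{g_1})\Psi(u_{g_2})=u'_{\phi(g_1)}u'_{\phi(g_2)}=f\bigl(\phi^{-1}(\phi(g_1)),\phi^{-1}(\phi(g_2))\bigr)u'_{\phi(g_1)\phi(g_2)}=f(g_1,g_2)\,u'_{\phi(g_1)\phi(g_2)}$; the two agree, so $\Psi$ is an algebra isomorphism. This is in fact exactly the content of Proposition \ref{prop:equivtw}(2), since $[f]$ and $\phi([f])$ lie in the same Aut$(G)$-orbit by construction; one may invoke that proposition directly in place of building $\Psi$ by hand.

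Finally, an algebra isomorphism preserves the number of simple summands in the decomposition \eqref{AW}, so $|\text{Irr}(G,f)|=|\text{Irr}(G,\phi(f))|$; equivalently, by Lemma \ref{dimprojrep} the two share a $\sqrt{|G|}$-dimensional irreducible $f$-projective representation or neither does. By definition $f$ is non-degenerate precisely when this cardinality equals $1$, and this condition is therefore shared by $f$ and $\phi(f)$, giving the claim for $[f]$ and $\phi([f])$. There is no substantial obstacle here: the only point requiring care is the bookkeeping of $\phi^{-1}$ in the definition of $\phi(f)$, which is arranged exactly so that the $f$-values match after relabelling the basis by $\phi$. An alternative route avoiding \eqref{AW} uses Corollary \ref{regularity}: one verifies directly from \eqref{altform} that $\alpha_{\phi(f)}(\phi(g),\phi(h))=\alpha_f(g,h)$, so $\phi$ carries $f$-regular conjugacy classes bijectively onto $\phi(f)$-regular ones and hence preserves the absence of nontrivial regular classes.
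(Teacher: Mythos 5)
Your proof is correct and takes essentially the same route as the paper, which simply invokes Proposition \ref{prop:equivtw}(2) to conclude that $\C^fG$ and $\C^{\phi(f)}G$ are graded-equivalent, hence isomorphic as algebras, so that one is simple if and only if the other is. Your explicit construction of $\Psi$ just unwinds that cited proposition, and the bookkeeping of $\phi^{-1}$ in \eqref{eq:action} checks out exactly as you describe.
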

\begin{proof}
By Proposition \ref{prop:equivtw}(2), the twisted group algebras $\C^fG$ and $\C^{\phi(f)}G$ are graded-equivalent,
in particular, isomorphic as algebras. Consequently, one of them is simple if and only if so is the other.
\end{proof}

\begin{definition}\label{symplecticdef}
Let $[f]\in H^2(G,\C^*)$ be a non-degenerate cohomology class.
The {\it symplectic group} of $[f]$ is the stabilizer
$$\text{Sp}([f]):=\text{Stab}_{\text{Aut}(G)}([f])=\{\phi\in\text{Aut}(G)| \phi([f])=[f]\} $$
An action $\eta: K\to \text{Aut}(G)$ of a group $K$ on $G$ is symplectic if there exists a non-degenerate class $[f]\in H^2(G,\C^*)$ such that
$\eta(K)<\text{Sp}([f])$.
\end{definition}
By the above, it is clear that the symplectic groups
of all non-degenerate classes of $G$ contain the group Inn$(G)$ of inner $G$-automorphisms.
If the action of Aut$(G)$ on the non-degenerate cohomology classes in $H^2(G,\C^*)$ is transitive,
then the symplectic groups of all the non-degenerate classes are conjugate. This allows us to denote their isomorphism type as {\it the} symplectic group
of $G$. For example, the symplectic group of any abelian group is well defined. However, Examples \ref{example0}, \ref{example1} and
\ref{example2} demonsrtrate that this is not the case in general.

The symplectic group of an abelian group $G=C_m\times C_m$ of rank 2 is easily described.
We identify its automorphism group $\text{Aut}(C_m\times C_m)$ with GL$_2(m)$ by choosing two generators $C_m\times C_m=\langle\sigma,\tau\rangle$
and representing any automorphism
$$\begin{array}{rcl}
\phi &\in &\text{Aut}(C_m\times C_m)\\
\sigma &\mapsto &\sigma^{i_1}\tau^{j_1}\\
\tau &\mapsto &\sigma^{i_2}\tau^{j_2}
\end{array}$$
(where $i_1,j_1,i_2,j_2\in\mathbb{Z}$, such that $i_1j_2-i_2j_1$ is prime to $m$)
by the matrix
$$[\phi]=[\phi]_{\langle\sigma,\tau\rangle}=\left(
\begin{array}{cc}
i_1 & i_2 \\
j_1 & j_2
\end{array} \right).$$
We show that the symplectic group of $C_m\times C_m$
coincides with
$$\text{SL}_2(m)=\{\phi\in \text{Aut}(C_m\times C_m)| \det[\phi]=1(\text{mod }m)\}.$$
This generalizes the well-known
result when $m$ is prime and $C_m\times C_m$ is viewed as a 2-dimensional vector space over the field of $m$ elements
(see \cite[1.2.6.]{OMeara}).
\begin{lemma}\label{sl2}
Let $[f]\in H^2(C_m\times C_m,\C^*)$. Then identifying Aut$(C_m\times C_m)$ with GL$_2(m)$, we have
\begin{enumerate}
\item SL$_2(m)<$Stab$_{\text{Aut}(C_m\times C_m)}([f])$.
\item If $[f]$ is non-degenerate, then
$\text{Sp}([f])=$SL$_2(m)$.
\end{enumerate}
\end{lemma}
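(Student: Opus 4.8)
The plan is to reduce the whole statement to the alternating bicharacter $\alpha_f$ attached to $[f]$ in \eqref{altform}, using the fact recorded after \eqref{formab} that over an abelian group this bicharacter \emph{determines} the cohomology class. Since $C_m\times C_m=\langle\sigma,\tau\rangle$ is abelian, $\alpha_f$ is bilinear and alternating, hence completely encoded by the single value $\zeta:=\alpha_f(\sigma,\tau)$, an $m$-th root of unity, together with $\alpha_f(\sigma,\sigma)=\alpha_f(\tau,\tau)=1$ and $\alpha_f(\tau,\sigma)=\zeta^{-1}$. The first step is to translate the action \eqref{eq:action} into an action on bicharacters: a direct substitution into \eqref{altform} gives $\alpha_{\phi(f)}(g,h)=\alpha_f(\phi^{-1}(g),\phi^{-1}(h))$, so $\phi$ stabilizes $[f]$ exactly when $\alpha_f$ is $\phi$-invariant, i.e. $\alpha_f(\phi(g),\phi(h))=\alpha_f(g,h)$ for all $g,h$.

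Next I would compute this invariance condition on generators. With $\phi(\sigma)=\sigma^{i_1}\tau^{j_1}$ and $\phi(\tau)=\sigma^{i_2}\tau^{j_2}$ as in the matrix convention $[\phi]$ fixed just before the lemma, bilinearity together with $\alpha_f(\sigma,\sigma)=\alpha_f(\tau,\tau)=1$ yields $\alpha_f(\phi(\sigma),\phi(\tau))=\zeta^{\,i_1j_2-i_2j_1}=\zeta^{\det[\phi]}$. Bilinearity also shows that invariance on the single pair $(\sigma,\tau)$ forces invariance on all pairs, so $\phi\in\text{Stab}_{\text{Aut}(C_m\times C_m)}([f])$ if and only if $\zeta^{\det[\phi]-1}=1$. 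Part (1) is then immediate: if $\det[\phi]\equiv 1\pmod m$ then $\zeta^{\det[\phi]-1}=1$ for \emph{every} $m$-th root of unity $\zeta$, so SL$_2(m)$ stabilizes every class $[f]$, degenerate or not.

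For part (2) it remains to pin down which $\zeta$ arise from non-degenerate classes. Using Corollary \ref{regularity} (equivalently Theorem \ref{abct}), a class on the abelian group $C_m\times C_m$ is non-degenerate exactly when the radical of $\alpha_f$ is trivial; computing $\alpha_f(\sigma^a\tau^b,\sigma)=\zeta^{-b}$ and $\alpha_f(\sigma^a\tau^b,\tau)=\zeta^{a}$ shows the radical equals $\langle\sigma^e,\tau^e\rangle$, where $e$ is the order of $\zeta$, and this is trivial iff $e=m$, i.e. iff $\zeta$ is a primitive $m$-th root of unity. Feeding $e=m$ into the stabilizer criterion $\zeta^{\det[\phi]-1}=1$ gives $\det[\phi]\equiv 1\pmod m$, whence $\text{Stab}_{\text{Aut}(C_m\times C_m)}([f])\subseteq$SL$_2(m)$; combined with part (1) this yields $\text{Sp}([f])=$SL$_2(m)$.

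I do not anticipate a serious obstacle, as the computations are routine. The two points that need care are the justification that invariance on the generator pair $(\sigma,\tau)$ suffices for invariance on all pairs (where the bilinearity and alternating property of $\alpha_f$ enter), and, in part (2), matching the two descriptions of non-degeneracy — triviality of the radical versus primitivity of $\zeta$ — for which Theorem \ref{abct} provides exactly the needed normal form.
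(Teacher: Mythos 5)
Your proposal is correct and follows essentially the same route as the paper's proof: both reduce stabilization of $[f]$ to invariance of the alternating bicharacter (using that on an abelian group $\alpha_f$ determines the class), both hinge on the computation $\alpha_f(\phi(\sigma),\phi(\tau))=\zeta^{\det[\phi]}$, and both characterize non-degeneracy by primitivity of $\zeta=\alpha_f(\sigma,\tau)$ via Theorem \ref{abct}. Your explicit computation of the radical and the remark that invariance on the single generator pair suffices merely spell out steps the paper leaves as ``a simple calculation.''
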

\begin{proof}

Let $C_m\times C_m=\langle \sigma, \tau\rangle$,
and let
\begin{equation}\label{eq:two}
\alpha_f({\sigma},{\tau})=\xi _{m},
\end{equation}
be the alternating form (see \eqref{formab}) associated to $[f]$,
where $\xi _m$ is an $m$-th root of unity.
Let $\phi \in\text{Aut}(C_m\times C_m)$.
A simple calculation shows that
\begin{equation}\label{eq:more}
\alpha_{\phi^{-1}(f)}({\sigma},{\tau})=\alpha_{f}(\phi({\sigma}),\phi(\tau))=
\xi _m ^{\det[\phi]}.
\end{equation}
The cohomology class $[f]$ is $\phi$-stable (or $\phi^{-1}$-stable)
if and only if $\alpha_{\phi(f)}=\alpha_f$.
By~\eqref{eq:two} and~\eqref{eq:more}, the condition $\phi\in$ $\text{SL}_2(m)$, that is det$[\phi]=1(\text{mod }m)$, implies that $\phi$ stabilizes $[f]$. This proves the first part of the claim.

Next, $[f]$ is non-degenerate if and only if the $m$-th root of unity $\xi _m$ is primitive (see \eqref{eq:ndcohonab}). Hence \eqref{eq:two} and \eqref{eq:more} prove that
when $[f]$ is non-degenerate, the condition det$[\phi]=1$ is necessary and sufficient for $\phi$ to stabilize $[f]$. This proves the second part of the claim.
\end{proof}

Using the notion of symplectic actions, we can phrase two results. The first is a closure property of groups of central type to symplectic semidirect products of Hall subgroups.
We are indebted to M. Kochetov for helping us improve the following result.
\begin{theorem}\label{prop:newctsdpr}
Let $G:= N \rtimes _{\eta} K$ be a semidirect product of two finite groups of coprime orders via the action
$$\eta: K\to \text{Aut}(N).$$
Then $G$ is of central type if and only if so are both $N$ and $K$ and, additionally, the action $\eta$ is symplectic.
In particular, if $N=C_m\times C_m$, then $N \rtimes _{\eta} K$ is of central type if and only if $K$ is of central type and
$\eta(K)\in\text{SL}_2(m)$.
\end{theorem}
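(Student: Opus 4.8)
The plan is to reduce both directions to the cohomological decomposition \eqref{eq:semdircoh} together with DeMeyer's Hall-subgroup criterion (Theorem \ref{th:hall}). Since $|N|$ and $|K|$ are coprime and $|G|=|N|\cdot|K|$, the subgroups $N$ and $K$ are complementary Hall subgroups of $G$, and every Sylow subgroup of $G$ is conjugate to one lying either in $N$ or in $K$. I will repeatedly use that $N\lhd G$, so that the Weyl group $W(N)=\mathcal{N}_G(N)/N=G/N$ is isomorphic to $K$, and under this identification its conjugation action on $H^2(N,\C^*)$ is precisely the action $\eta$; consequently $H^2(N,\C^*)^{W(N)}=H^2(N,\C^*)^K$.

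For the forward implication, I assume $G$ is of central type and fix a non-degenerate $[f]\in H^2(G,\C^*)$. Because $N$ and $K$ are Hall subgroups, the concluding assertion of Theorem \ref{th:hall} immediately yields that both $N$ and $K$ are of central type. To see that $\eta$ is symplectic I set $[f_N]:=\text{res}^G_N[f]$. This restriction is non-degenerate by Theorem \ref{th:hall}(2), and it lies in $H^2(N,\C^*)^{W(N)}=H^2(N,\C^*)^K$ by the general containment $\text{Im}(\text{res}^G_N)\subseteq H^2(N,\C^*)^{W(N)}$ quoted before Theorem \ref{thmswan}. Hence $[f_N]$ is a non-degenerate class stabilized by $\eta(K)$, so $\eta(K)\le\text{Sp}([f_N])$ and $\eta$ is symplectic.

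For the converse I assume $N,K$ of central type and $\eta$ symplectic. Symplecticity supplies a non-degenerate $[f_N]\in H^2(N,\C^*)^K$, and central type of $K$ supplies a non-degenerate $[f_K]\in H^2(K,\C^*)$. Invoking \eqref{eq:semdircoh}, realized via the restriction maps to $N$ and $K$, I obtain $[f]\in H^2(G,\C^*)$ with $\text{res}^G_N[f]=[f_N]$ and $\text{res}^G_K[f]=[f_K]$. To verify non-degeneracy of $[f]$ I apply Theorem \ref{th:hall}(3): for each prime $p\mid|G|$ a Sylow $p$-subgroup of $G$ is conjugate to one contained in $N$ or in $K$, and transitivity of restriction identifies the restriction of $[f]$ to such a subgroup with a restriction of the non-degenerate class $[f_N]$ or $[f_K]$, which is non-degenerate by Theorem \ref{th:hall}(3) applied inside $N$ or $K$. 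Since non-degeneracy is conjugation-invariant, every Sylow restriction of $[f]$ is non-degenerate, whence $[f]$ is non-degenerate and $G$ is of central type.

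Finally, for the ``in particular'' statement I specialize to $N=C_m\times C_m$, which is abelian of central type by Theorem \ref{abct}, so the hypothesis on $N$ is automatic. By Lemma \ref{sl2}(2) every non-degenerate class of $C_m\times C_m$ has symplectic group exactly $\text{SL}_2(m)$, while Lemma \ref{sl2}(1) shows $\text{SL}_2(m)$ stabilizes every class; together with the existence of a non-degenerate class (Example \ref{example}) this shows $\eta$ is symplectic if and only if $\eta(K)\le\text{SL}_2(m)$. Substituting into the main equivalence gives the stated criterion. The only genuinely delicate point is the bookkeeping in the converse: matching the two prescribed restrictions through \eqref{eq:semdircoh} and then propagating non-degeneracy from $N$ and $K$ to all Sylow subgroups of $G$ via conjugacy; every other step is a direct application of the quoted results.
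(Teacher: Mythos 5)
Your proof is correct and follows essentially the same route as the paper's: both rest on the decomposition \eqref{eq:semdircoh} of $H^2(G,\C^*)$ into $H^2(N,\C^*)^K\times H^2(K,\C^*)$ together with the Hall/Sylow non-degeneracy criterion of Theorem \ref{th:hall}, and both deduce the ``in particular'' clause from Lemma \ref{sl2}. You merely spell out more explicitly the Sylow-conjugacy bookkeeping and the identification $W(N)\cong K$ that the paper leaves implicit.
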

\begin{proof}
By \eqref{eq:semdircoh}, any cohomology class $[f]\in H^2(G,\C^*)$ is given by a pair $([f_1],[f_2])$, where $[f_1]\in H^2(N,\C^*)^K$ is a $K$-invariant class and
$[f_2]\in H^2(K,\C^*)$.
Since $N$ and $K$ are Hall subgroups of $G$, then by Theorem \ref{th:hall}, non-degeneracy of both $[f_1]$ and $[f_2]$ is equivalent to non-degeneracy of their restrictions to all the Sylow
subgroups of $N$ and $K$ respectively, which in turn is equivalent to the non-degeneracy of $[f]$.
The argument is completed by noting that there exists a class $[f_1]\in H^2(N,\C^*)^K$ which is non-degenerate if and only if $N$ is of central type and the action of $K$ on it is symplectic.
The second part of the claim follows from the first part together with Lemma \ref{sl2}(2).
\end{proof}

The second result concerns groups whose $p$-Sylow subgroups $G_p$ are all abelian. These groups are termed {\it of A-type} and possess nice properties (see \cite{Taunt}).
Let $G$ be of A-type and let $m$ be its order. Applying Theorem \ref{thmswan} for all primes $p|m$ we deduce that any tuple
\begin{equation}\label{Atypecoho}
([f_p])_{p|m}\in \bigoplus_{p|m}H^2(G_p,\mathbb{C}^*)^{W(G_p)}.
\end{equation}
gives rise to a cohomology class $[f]\in H^2(G,\mathbb{C}^*)$ such that res$^G_{G_p}[f]=[f_p]$ (this class is actually unique \cite[Theorem II.6.6]{AM04}).
By Theorem \ref{th:hall} we get
\begin{theorem}\label{thatype}
Let $G$ be a group of A-type.
Then $G$ is of central type if and only if for every prime $p$, $G_p$ is of central type and the action of its Weyl group $W(G_p)$ is symplectic.
\end{theorem}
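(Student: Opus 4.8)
The plan is to read the theorem directly off the correspondence already assembled in \eqref{Atypecoho} together with the Hall criterion of Theorem \ref{th:hall}, so that essentially no new computation is needed. Write $m=|G|$. Since $G$ is of A-type, every Sylow subgroup $G_p$ is abelian, so Swan's Theorem \ref{thmswan} identifies $\text{Im}(\text{res}^G_{G_p})$ with the full group of Weyl invariants $H^2(G_p,\C^*)^{W(G_p)}$; combined with the uniqueness recorded after \eqref{Atypecoho}, this makes
\[
[f]\longmapsto \bigl(\text{res}^G_{G_p}[f]\bigr)_{p\mid m}
\]
a bijection from $H^2(G,\C^*)$ onto $\bigoplus_{p\mid m}H^2(G_p,\C^*)^{W(G_p)}$. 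First I would record the translation of Definition \ref{symplecticdef} that drives both implications: the conjugation action of $W(G_p)$ on $G_p$ is symplectic precisely when $H^2(G_p,\C^*)^{W(G_p)}$ contains a non-degenerate class, since $W(G_p)<\text{Sp}([f_p])$ is just the assertion that $[f_p]$ is $W(G_p)$-invariant.

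For the forward direction I would assume $G$ is of central type and pick a non-degenerate $[f]\in H^2(G,\C^*)$. By Theorem \ref{th:hall} each restriction $[f_p]:=\text{res}^G_{G_p}[f]$ is non-degenerate, so every $G_p$ is of central type; moreover $[f_p]$ automatically lies in $H^2(G_p,\C^*)^{W(G_p)}$, because the image of the restriction map is always contained in the Weyl invariants. By the dictionary above this is exactly the statement that each $W(G_p)$ acts symplectically.

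For the converse I would assume that for every $p\mid m$ the subgroup $G_p$ is of central type and that $W(G_p)$ acts symplectically. Then for each $p$ I may choose a non-degenerate class $[f_p]\in H^2(G_p,\C^*)^{W(G_p)}$, and feeding the tuple $([f_p])_{p\mid m}$ through the bijection above yields a class $[f]\in H^2(G,\C^*)$ restricting to $[f_p]$ on each $G_p$. Theorem \ref{th:hall} then forces $[f]$ itself to be non-degenerate, whence $G$ is of central type.

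I do not expect any genuinely hard step: the substance lies entirely in Theorems \ref{thmswan} and \ref{th:hall}, and the argument is bookkeeping that assembles them. The one point deserving care will be making explicit the correspondence between the group-theoretic notion of a symplectic Weyl action and the purely cohomological statement that $H^2(G_p,\C^*)^{W(G_p)}$ carries a non-degenerate class; once this dictionary is in place, both implications are immediate.
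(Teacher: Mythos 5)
Your proposal is correct and follows essentially the same route as the paper: the authors likewise combine Swan's Theorem \ref{thmswan} (plus the uniqueness of the lift, cited to Adem--Milgram) to identify $H^2(G,\C^*)$ with $\bigoplus_{p\mid m}H^2(G_p,\C^*)^{W(G_p)}$, and then invoke Theorem \ref{th:hall} to reduce non-degeneracy to the Sylow restrictions. Your explicit unpacking of ``$W(G_p)$ acts symplectically'' as ``$H^2(G_p,\C^*)^{W(G_p)}$ contains a non-degenerate class'' is exactly the dictionary the paper uses implicitly.
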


\subsection{Groups of central type of cube-free order}\label{Proof of Theorem C}
Groups whose orders are cube-free obviously admit abelian $p$-Sylow subgroups and hence are of A-type. Theorem \ref{thatype} described the groups of A-type which are also of central type.
Nonetheless, when their order is cube-free, groups of central type admit a more rigid structure.
In this section we describe all such groups of central type.

The following is a fairly easy claim, which provides an excuse for the focus on cube-free integers.
\begin{lemma}\label{sqrfree}
Suppose that $C_n\times C_n$ is the unique group of central type, up to isomorphism, of order $n^2$. Then $n$ is square-free.
\end{lemma}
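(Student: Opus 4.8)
The plan is to prove the contrapositive: if $n$ is \emph{not} square-free, then there exists a group of central type of order $n^2$ that is \emph{not} isomorphic to $C_n \times C_n$, contradicting uniqueness. So I would begin by writing $n$ with a prime $p$ such that $p^2 \mid n$, say $n = p^2 m$ with $p$ possibly dividing $m$. The idea is to build, from the abelian examples guaranteed by Example~\ref{example} together with Theorem~\ref{prop:newctsdpr}, a non-abelian group of central type of the required order, thereby exhibiting a second isomorphism type.

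The cleanest route exploits the $p$-part directly. Since $p^2 \mid n$, the Sylow $p$-subgroup of $C_n \times C_n$ has order $p^{2k}$ with $k \geq 2$, and is isomorphic to $C_{p^a} \times C_{p^a}$ for the appropriate $a \geq 2$ (when $n = p^a m'$ with $p \nmid m'$). The key observation is that an abelian group of central type of order $p^{2a}$ need not be $C_{p^a}\times C_{p^a}$: by Theorem~\ref{abct}, every group of the form $A \times A$ with $A$ abelian of order $p^a$ is of central type and carries a symplectic form. Taking, for instance, $A = C_{p^2}\times C_p \times \cdots$ in place of the cyclic $A=C_{p^a}$, one obtains an abelian group of central type of the same order $p^{2a}$ but of a different isomorphism type than $C_{p^a}\times C_{p^a}$. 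First I would verify that such an $A$ with $A \not\cong C_{p^a}$ exists precisely because $a \geq 2$ (when $a=1$ the only abelian group of order $p$ is $C_p$, which is why square divisors are needed). This already produces a second group of central type of order $p^{2a}$ at the Sylow level.

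To assemble a global group of order $n^2$, I would then combine this anomalous $p$-part with the standard $C_q \times C_q$ factors for the remaining prime powers $q^{b}\,\|\,n$. Concretely, form the direct product of the exotic abelian group $A \times A$ (with $|A| = p^a$, $A \not\cong C_{p^a}$) together with $\prod_{q \neq p} (C_{q^{b_q}} \times C_{q^{b_q}})$. Each factor is abelian of central type by Theorem~\ref{abct}, and a direct product of groups of coprime orders that are each of central type is again of central type --- this follows from Theorem~\ref{th:hall} (equivalently from the trivial-action case of Theorem~\ref{prop:newctsdpr}), since non-degeneracy is detected Sylow-by-Sylow and the Sylow subgroups of the product are exactly the given factors. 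The resulting group has order $n^2$ but is not isomorphic to $C_n \times C_n$, because its Sylow $p$-subgroup $A \times A$ is not isomorphic to $C_{p^a}\times C_{p^a}$, and isomorphic groups have isomorphic Sylow subgroups.

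The main obstacle, and the step deserving the most care, is the bookkeeping that guarantees the constructed group is genuinely non-isomorphic to $C_n \times C_n$ while still having order exactly $n^2$ and being of central type. The non-isomorphism is clinched by comparing Sylow $p$-subgroups via the invariant-factor decomposition, so I would make sure to choose $A$ so that $A \times A \not\cong C_{p^a} \times C_{p^a}$ as abelian groups --- which is automatic as soon as $A \not\cong C_{p^a}$, since the map $A \mapsto A \times A$ is injective on isomorphism types of abelian $p$-groups. The central-type verification reduces, by Theorem~\ref{th:hall}, to checking each Sylow factor is of central type, which is immediate from Theorem~\ref{abct}. Thus the only real content is the elementary fact that for $a \geq 2$ there is more than one isomorphism type of abelian group of order $p^a$, which is precisely what the hypothesis $p^2 \mid n$ supplies.
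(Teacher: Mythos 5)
Your proof is correct and follows essentially the same route as the paper: both exhibit a non-cyclic abelian group $A$ of order $n$ (possible exactly when $n$ is not square-free) and invoke Theorem~\ref{abct} to see that $A\times A$ is a second isomorphism type of central type of order $n^2$. The paper does this more compactly by writing $n=n_1^2n_2$ with $n_1>1$ and taking $A=C_{n_1n_2}\times C_{n_1}$ outright, whereas you modify only the Sylow $p$-part and reassemble via Theorem~\ref{th:hall}; the extra bookkeeping is harmless but unnecessary.
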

\begin{proof}
Assume, in negation, that $n$ contains a square. Let $n=n_1^2\cdot n_2$, where $n_1>1$.
Then the groups $$C_{n_1^2\cdot n_2}\times C_{n_1^2\cdot n_2}$$ and $$(C_{n_1\cdot n_2}\times C_{n_1})\times (C_{n_1\cdot n_2}\times C_{n_1})$$
are non-isomorphic, of order $n^2$ and of central type (see Theorem \ref{abct}).
This contradicts the uniqueness assumption.
\end{proof}
Corollary B gives a necessary and sufficient arithmetic condition for a cube-free number to admit a unique isomorphism type of groups of central type of this order
(see \S\ref{Proof of Corollary B}).

Groups of central type whose orders are a square of {\it one} prime are isomorphic to $C_p\times C_p$ (Theorem \ref{abct}). By Theorem \ref{th:hall}, these are the $p$-Sylow subgroups
of any group of central type of cube-free order.
We move to integers $n$ which are products of two distinct primes $p$ and $q$. Groups of central type of order $p^2q^2$ will play a key part in the proof of Theorem A.
By Lemma \ref{cor:sdpsdp}, we know that groups of such orders are semidirect products of their Sylow subgroups.
We study the corresponding action in the central type case.
\begin{lemma}\label{th:2com}
Let $q\neq p$ be primes and let $G=(C_{q}\times C_{q})\rtimes_{\eta} (C_{p}\times C_{p})$
be a non-abelian group of central type. Then
\begin{enumerate}
\item $G^{\shortmid}=C_{q}\times C_{q}$.
\item The center of $G$ coincides with Ker$(\eta)$, which is cyclic of order $p$.
\end{enumerate}
\end{lemma}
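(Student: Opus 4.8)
The plan is to work entirely inside the given semidirect product, convert the central-type hypothesis into a concrete condition on $\eta$, and then read off both statements from linear algebra over $\F_q$. Write $N=C_q\times C_q$ and $K=C_p\times C_p$, and identify $N$ with the $2$-dimensional $\F_q$-vector space $\F_q^2$, so that $\eta:K\to\mathrm{Aut}(N)$ becomes a homomorphism $\eta:K\to\mathrm{GL}_2(q)$. Since $|N|=q^2$ and $|K|=p^2$ are coprime and $N\cong C_q\times C_q$, Theorem \ref{prop:newctsdpr} applies: $K=C_p\times C_p$ is automatically of central type (Theorem \ref{abct}), so the only remaining requirement for $G$ to be of central type is that the action be symplectic, which by Lemma \ref{sl2}(2) means $\eta(K)\subseteq\mathrm{SL}_2(q)$. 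Because $G$ is non-abelian, $\eta$ is nontrivial.

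Next I would pin down $\eta(K)$ precisely. Every element of $K$ has order dividing $p$, and since $p\neq q$ is prime to the characteristic of $\F_q$, each $\eta(k)$ has separable minimal polynomial and hence is semisimple. The commuting semisimple elements of $\eta(K)$ are simultaneously diagonalizable over $\overline{\F_q}$; having determinant $1$ they lie in a common one-parameter torus $\{\mathrm{diag}(t,t^{-1})\}\cong\overline{\F_q}^{\,*}$. A finite subgroup of the multiplicative group of a field is cyclic, so $\eta(K)$ is cyclic, and being nontrivial of exponent dividing $p$ it satisfies $\eta(K)\cong C_p$. Therefore $\ker\eta$ has order $p^2/p=p$ and, as a subgroup of $C_p\times C_p$, is cyclic of order $p$; this already establishes the order and cyclicity asserted in statement (2).

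The heart of the argument is a fixed-point-freeness property. Pick $k_0\in K$ with $\eta(k_0)$ a generator of $\eta(K)$. As $\eta(k_0)$ is semisimple of order $p$ with $\det=1$, its eigenvalues are $\lambda,\lambda^{-1}$ with $\lambda^p=1$; if $1$ were an eigenvalue then both eigenvalues would be $1$, making $\eta(k_0)$ simultaneously unipotent and semisimple, hence the identity, contradicting its order $p$. Thus $1$ is not an eigenvalue, and $\eta(k_0)-I$ is invertible on $\F_q^2$. Now $G/N\cong K$ is abelian, so $G'\subseteq N$; writing $N$ additively, a routine computation in the semidirect product gives $[k,n]=(\eta(k)-I)(n)$, whence $G'=\sum_{k\in K}\mathrm{Im}(\eta(k)-I)$. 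The summand for $k_0$ is already all of $N$, so $G'=N$, which is statement (1).

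Finally I would identify the center. The inclusion $\ker\eta\subseteq Z(G)$ is immediate: an element of $\ker\eta$ acts trivially on $N$ and lies in the abelian group $K$, hence commutes with $G=NK$. For the reverse inclusion, write a central $z=nk$ with $n\in N$, $k\in K$. Commuting with all of $N$ forces $\eta(k)=\mathrm{id}$, i.e. $k\in\ker\eta$; commuting with all of $K$ forces $\eta(k')(n)=n$ for every $k'\in K$, i.e. $n\in N^{\eta(K)}=\ker(\eta(k_0)-I)$. Since $\eta(k_0)-I$ is invertible this intersection is $0$, so $n=1$ and $z=k\in\ker\eta$; hence $Z(G)=\ker\eta$, completing (2). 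The only substantive ingredient is the fixed-point-freeness of the action, and I expect the crux to be exactly the passage from central type to $\eta(K)\subseteq\mathrm{SL}_2(q)$ together with the observation that a nonidentity element of $\mathrm{SL}_2(q)$ of order prime to $q$ fixes no nonzero vector; once this is in hand, the commutator and center computations are purely formal.
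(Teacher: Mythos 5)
Your proof is correct and follows essentially the same route as the paper: reduce to $\eta(K)\subseteq\mathrm{SL}_2(q)$ via Theorem \ref{prop:newctsdpr}, then deduce both assertions from (i) fixed-point-freeness of nontrivial elements of $\mathrm{SL}_2(q)$ of order prime to $q$ and (ii) cyclicity of the relevant $p$-subgroup. The only difference is that the paper merely cites these two properties (referring to Suzuki for the cyclic odd Sylow subgroups of $\mathrm{SL}_2(q)$), whereas you prove them outright; in particular your simultaneous-diagonalization argument, embedding the abelian image $\eta(K)$ into the multiplicative group of a field to get cyclicity, is a clean self-contained substitute for that citation.
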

\begin{proof}
Theorem \ref{prop:newctsdpr} tells us that the action $\eta$ is via $\text{SL}_2(q)$.
The two assertions are consequences of the following properties of such an action.

(i) A nontrivial element in $\text{SL}_2(q)$ whose order is prime to $q$ admits no fixed points in the 2-dimensional vector space $\mathbb{F}_{q}^2$.

(ii) For $p=2$ there exists a unique element of order $p$ in $\text{SL}_2(q)$. For odd $p$,
all the $p$-Sylow subgroups of $\text{SL}_2(q)$ are cyclic \cite[\S 7]{MR0074411}.
\end{proof}

The following theorem is a full classification of groups of central type of order $p^2q^2$.
\begin{theorem}\label{th:2pricase}
Let $p<q$ be primes.
\begin{enumerate}
\item If $q \not \equiv \pm 1($mod $p)$, then up to isomorphism there is a unique group of central type of order $p^2q^2$, namely the abelian group
$C_{p\cdot q}\times C_{p\cdot q}.$
\item If $3<q\equiv \pm 1($mod $p)$, then there are exactly two non-isomorphic groups of central type
of order $p^2q^2$, both are
semidirect products
$(C_q\times C_q)\rtimes (C_p\times C_p)$, with SL$_2(q)$-actions.
\item There are exactly three non-isomorphic groups of central type of order $36$ (here $p=2,q=3$), namely, the abelian group of central type
$C_6\times C_6,$
as well as two non-abelian symplectic semidirect products
$(C_3\times C_3)\rtimes (C_2\times C_2)$, and $(C_2\times C_2)\rtimes (C_3\times C_3)$ where the actions are via SL$_2(3)$ and SL$_2(2)$ respectively.
\end{enumerate}
\end{theorem}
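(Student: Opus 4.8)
The plan is to reduce the classification to a question about homomorphisms into $\mathrm{SL}_2$, and then to analyze those homomorphisms case by case using the internal structure of $\mathrm{SL}_2(q)$. First I would fix a group $G$ of central type of order $p^2q^2$ and pin down its Sylow subgroups. By Theorem \ref{th:hall} each Sylow subgroup is itself of central type; since a group of order $r^2$ is abelian, Theorem \ref{abct} forces $G_p\cong C_p\times C_p$ and $G_q\cong C_q\times C_q$. Lemma \ref{cor:sdpsdp} then realizes $G$ as a semidirect product of these Sylow subgroups: when $pq\neq 6$ the $q$-Sylow is normal and $G=(C_q\times C_q)\rtimes_\eta(C_p\times C_p)$, while for $|G|=36$ either orientation may occur. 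Finally, Theorem \ref{prop:newctsdpr} (via Lemma \ref{sl2}) identifies the central-type condition with the requirement that $\eta$ take values in $\mathrm{SL}_2$ of the appropriate size. Thus the whole theorem becomes the problem of classifying homomorphisms $\eta\colon C_p\times C_p\to\mathrm{SL}_2(q)$ up to the equivalence that produces isomorphic semidirect products.

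Next I would determine the possible images of $\eta$. Since $C_p\times C_p$ has exponent $p$, the image $\eta(C_p\times C_p)$ is an elementary abelian $p$-subgroup of $\mathrm{SL}_2(q)$; but the $p$-Sylow subgroup of $\mathrm{SL}_2(q)$ is cyclic for odd $p$, and $-I$ is the unique involution for $p=2$ (the facts already invoked in the proof of Lemma \ref{th:2com}). Hence every such image is either trivial or cyclic of order $p$. A trivial $\eta$ yields the abelian group $C_{pq}\times C_{pq}$, which is of central type by Theorem \ref{abct}. This already settles case (1): when $q\not\equiv\pm1\pmod p$ one has $p\nmid q(q^2-1)=|\mathrm{SL}_2(q)|$ (using $p\neq q$ and $p\nmid q\mp 1$), so $\eta$ must be trivial, and $C_{pq}\times C_{pq}$ is the unique group of central type of that order.

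For case (2) the prime $p$ divides $q^2-1$, so $\mathrm{SL}_2(q)$ contains elements of order $p$. The key point is that all cyclic subgroups of order $p$ in $\mathrm{SL}_2(q)$ are conjugate under $\mathrm{GL}_2(q)=\mathrm{Aut}(C_q\times C_q)$: by Sylow's theorem the $p$-Sylow subgroups are conjugate, and each (being cyclic, for odd $p$) contains a unique subgroup of order $p$, while for $p=2$ the subgroup $\langle -I\rangle$ is the only one. Combining this single conjugacy class with the transitivity of $\mathrm{Aut}(C_p\times C_p)$ on its surjections onto $C_p$ shows that all non-trivial admissible $\eta$ produce isomorphic non-abelian semidirect products. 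Together with the abelian group this yields exactly two isomorphism types, covering both $p=2,\ q\geq 5$ and odd $p<q$ with $q\equiv\pm1\pmod p$. For case (3) the two orientations of Lemma \ref{cor:sdpsdp}(2) genuinely contribute: applying the same argument to $(C_3\times C_3)\rtimes(C_2\times C_2)$ (using the unique involution $-I\in\mathrm{SL}_2(3)$) and to $(C_2\times C_2)\rtimes(C_3\times C_3)$ (using the unique subgroup of order $3$ in $\mathrm{SL}_2(2)\cong S_3$) gives one non-abelian group of each orientation alongside $C_6\times C_6$. To see these three are pairwise non-isomorphic I would invoke Lemma \ref{th:2com}(1): the two non-abelian groups have derived subgroups $C_3\times C_3$ and $C_2\times C_2$, of orders $9$ and $4$, so no two of the three can coincide.

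I expect the main obstacle to be twofold. The structural input -- that the order-$p$ subgroups of $\mathrm{SL}_2(q)$ form a single $\mathrm{GL}_2(q)$-conjugacy class -- rests on the classical description of $\mathrm{SL}_2(q)$ already cited for Lemma \ref{th:2com}, so it is available but must be quoted precisely. The more delicate bookkeeping is the passage from ``conjugate image subgroups'' to ``isomorphic semidirect products'': one must track the combined action of $\mathrm{Aut}(C_q\times C_q)$ on the target and of $\mathrm{Aut}(C_p\times C_p)$ on the source, and verify that, since the normal Sylow factor is characteristic, these automorphisms account for all isomorphisms between the semidirect products in question.
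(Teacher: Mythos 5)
Your proposal is correct and follows essentially the same route as the paper: reduce to a semidirect product of Sylow subgroups via Lemma \ref{cor:sdpsdp}, convert the central-type condition into an $\mathrm{SL}_2$-valued action via Theorem \ref{prop:newctsdpr}, and then use the facts that the odd Sylow subgroups of $\mathrm{SL}_2(q)$ are cyclic and $-I$ is its unique involution to show all nontrivial actions give a single isomorphism type. You merely spell out two points the paper leaves implicit (the passage from conjugate image subgroups to isomorphic semidirect products, and the pairwise non-isomorphism in the order-36 case via derived subgroups), which is added detail rather than a different argument.
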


\begin{proof}
By Lemma~\ref{cor:sdpsdp}, any group of order $p^2q^2$ is a semidirect product of its Sylow subgroups.
By Theorem \ref{prop:newctsdpr}, $G$ is of central type if and only if
\begin{enumerate}[(i)]
\item $G_p=C_p\times C_p$,
\item $G_q=C_q\times C_q$, and
\item $G_p$ acts on $G_q$ via SL$_2(q)$, or
when $|G|=36$, either $G_2$ acts on $G_3$ via SL$_2(3)$, or $G_3$ acts on $G_2$ via SL$_2(2)$.
\end{enumerate}
Now, since $$|\text{SL}_2(q)|=(q-1)q(q+1),$$ then $q\equiv \pm 1(\text{mod } p)$, that is $p$ divides either $q+1$ or $q-1$, if and only if
SL$_2(q)$ contains an element of order $p$. This is exactly the case where there is a nontrivial symplectic semidirect product $G=(C_q\times C_q)\rtimes (C_p\times C_p)$.
In particular, the abelian group $C_{p\cdot q}\times C_{p\cdot q}$ is the unique group of central type of order $p^2q^2$ (up to isomorphism) if and only if $q$ is not congruent to $\pm 1($mod $p)$,
proving (1).

As mentioned in the proof of Lemma \ref{th:2com}(2), the odd Sylow subgroups of $\text{SL}_2(q)$ are cyclic. This implies that
all the subgroups of order $p>2$ are conjugated in $\text{SL}_2(q)$ as characteristic subgroups in conjugated $p$-Sylow subgroups.
For $p=2$ there is a unique element of order $p$ in $\text{SL}_2(q)$.
Consequently, for either odd or even $p$, any two nontrivial symplectic actions $\eta_1,\eta_2:C_p\times C_p\to$SL$_2(q)$ yield isomorphic semidirect products
$$(C_q\times C_q)\rtimes_{\eta_1} (C_p\times C_p)\cong (C_q\times C_q)\rtimes_{\eta_2} (C_p\times C_p),$$ proving (2).
As mentioned above, in the case where $pq=6$ there is an additional non-abelian group of central type, namely $(C_2\times C_2)\rtimes_{\eta} (C_3\times C_3)$, for a
nontrivial action $\eta:C_3\times C_3\to$SL$_2(2)$.
By similar considerations, this isomorphism type is unique, proving (3).
\end{proof}

We need the following straightforward preliminary result.
\begin{lemma}\label{remark:directct}
Let $G_1$ and $G_2$ be groups of central type, then so is $G=G_1\times G_2$.
\end{lemma}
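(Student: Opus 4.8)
The plan is to prove that the direct product $G = G_1 \times G_2$ of two groups of central type is again of central type by exhibiting an explicit non-degenerate cohomology class on $G$, built from non-degenerate classes on the factors. The most natural route is via Lemma \ref{dimprojrep}: a group is of central type precisely when it admits a non-degenerate cocycle, i.e. one whose twisted group algebra has an irreducible projective representation of dimension $\sqrt{|G|}$. So the heart of the matter is to produce such a class on the product.

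First I would invoke the hypothesis to fix non-degenerate classes $[f_1] \in H^2(G_1,\C^*)$ and $[f_2] \in H^2(G_2,\C^*)$, with corresponding irreducible projective representations $W_1, W_2$ of dimensions $\sqrt{|G_1|}$ and $\sqrt{|G_2|}$ respectively (Lemma \ref{dimprojrep}). The key construction is the \emph{external product} cocycle $f = f_1 \times f_2 \in Z^2(G_1 \times G_2, \C^*)$ defined by
\begin{equation}\label{extprod}
f\bigl((g_1,g_2),(h_1,h_2)\bigr) := f_1(g_1,h_1)\cdot f_2(g_2,h_2).
\end{equation}
One checks directly that this is a $2$-cocycle on $G_1 \times G_2$. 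The tensor product $W_1 \otimes W_2$ then carries an $f$-projective representation of $G_1 \times G_2$, since the commuting actions of the two factors multiply the cocycle obstructions exactly as in \eqref{extprod}. Its dimension is $\sqrt{|G_1|}\cdot\sqrt{|G_2|} = \sqrt{|G_1 \times G_2|}$, which by Lemma \ref{dimprojrep} already forces $f$ to be non-degenerate, provided we know $W_1 \otimes W_2$ is irreducible.

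The one step that genuinely requires an argument is therefore the irreducibility of $W_1 \otimes W_2$ as an $f$-projective representation; everything else is a routine verification. I expect to handle this either by a dimension count or by a direct Schur-type argument. The dimension count is the cleanest: since $\dim(W_1 \otimes W_2) = \sqrt{|G_1 \times G_2|}$, and since no irreducible $f$-projective representation of a group can have dimension exceeding $\sqrt{|G|}$ (this is immediate from the decomposition \eqref{AW}, where the sum of squares of the dimensions equals $|G|$), a single summand of dimension exactly $\sqrt{|G|}$ must constitute the whole space. Hence $|{\rm Irr}(G_1\times G_2, f)| = 1$, which is precisely the statement that $G_1 \times G_2$ is of central type with $f$ non-degenerate. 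Alternatively, one can argue symplectically: the alternating form $\alpha_f$ on the abelian-commuting pairs decomposes as an orthogonal product of $\alpha_{f_1}$ and $\alpha_{f_2}$ over $\mathcal{A}_{G_1}\times\mathcal{A}_{G_2}$, and one verifies via Corollary \ref{regularity} that $f$ has no nontrivial $f$-regular element because neither factor does; this avoids invoking irreducibility directly. Either way the obstacle is mild, and the direct-product decomposition of the cocycle does all the real work.
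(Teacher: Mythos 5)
Your construction is exactly the paper's: the paper's entire proof consists of writing down the product cocycle $f\bigl((g_1,g_2),(g_1',g_2')\bigr)=f_1(g_1,g_1')\cdot f_2(g_2,g_2')$ and asserting that its non-degeneracy ``is easily verified.'' So the approach matches; the only question is whether your verification of that last assertion is sound.

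Your primary justification, the dimension count, is circular. Knowing that $W_1\otimes W_2$ is an $f$-projective representation of dimension $\sqrt{|G|}$ does not place you in the situation of Lemma \ref{dimprojrep}, which requires an \emph{irreducible} representation of that dimension. The bound $\dim W\le\sqrt{|G|}$ for irreducibles coming from \eqref{AW} does not prevent $W_1\otimes W_2$ from decomposing into several smaller irreducibles, and the sentence ``a single summand of dimension exactly $\sqrt{|G|}$ must constitute the whole space'' presupposes exactly the irreducibility you set out to prove. The clean repair is to bypass representations altogether: $\C^fG\cong\C^{f_1}G_1\otimes\C^{f_2}G_2\cong M_{\sqrt{|G_1|}}(\C)\otimes M_{\sqrt{|G_2|}}(\C)\cong M_{\sqrt{|G|}}(\C)$ is simple, hence $f$ is non-degenerate (equivalently, by Burnside the two factors act on $W_1\otimes W_2$ through all of $\text{End}_{\C}(W_1)\otimes\text{End}_{\C}(W_2)$, giving irreducibility). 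Your fallback argument via Corollary \ref{regularity} is correct as stated: if $(g_1,g_2)$ is $f$-regular, pairing against elements $(h_1,e)$ and $(e,h_2)$ of its centralizer shows each $g_i$ is $f_i$-regular, hence trivial. So the lemma is proved, but by the argument you relegate to an alternative rather than by the one you designate as cleanest.
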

\begin{proof}
Let $f_i\in Z^2(G_i,\C ^*)$ be non-degenerate 2-cocycles, $i=1,2$.
Then it is easily verified that the 2-cocycle
$$\begin{array}{ccc}
f:G\times G&\to&\C ^*\\
\left((g_1,g_2),(g_1^{\shortmid},g_2^{\shortmid})\right)&\mapsto & f_1(g_1,g_1^{\shortmid})\cdot f_2(g_2,g_2^{\shortmid})
\end{array}$$
is non-degenerate.
\end{proof}

The following lemma will be needed for the proof of Theorem A.
\begin{lemma}\label{linealemma}
Let $V=\mathbb{F}^2$ be a 2-dimensional vector space over a field $\mathbb{F}$ and let $S,T:V\to V$ be two operators such that $T$ is nonsingular and
$S$ is diagonalizable (perhaps over an extension of $\mathbb{F}$).
Suppose that
\begin{equation}\label{comST}
TST^{-1}=S^i
\end{equation}
for some integer $i$.
Then $S$ and $T^2$ commute.
\end{lemma}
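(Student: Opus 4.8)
The plan is to work over an extension field $\mathbb{K}\supseteq\mathbb{F}$ on which $S$ is diagonalizable, which is legitimate because the desired identity $ST^2=T^2S$ is an equality of matrices that holds over $\mathbb{F}$ if and only if it holds after extending scalars to $\mathbb{K}$. Over $\mathbb{K}$ I would first dispose of the degenerate case: if $S$ is a scalar operator it commutes with everything and there is nothing to prove. So I may assume $S$ has two distinct eigenvalues $\lambda_1\neq\lambda_2$ with one-dimensional eigenlines $L_1,L_2$ spanning $V$, say $L_j=\mathbb{K}v_j$ with $Sv_j=\lambda_j v_j$.

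The heart of the argument is to show that $T$ permutes the pair of eigenlines $\{L_1,L_2\}$. Rewriting \eqref{comST} as $TS=S^iT$, I observe that if $Sv=\lambda_j v$ then $S^i(Tv)=T(Sv)=\lambda_j Tv$, so $T$ carries each eigenvector of $S$ to an eigenvector of $S^i$. Because $S^i=TST^{-1}$ is conjugate to $S$, it has the same pair of distinct eigenvalues as $S$ and in particular is not a scalar operator; being a power of $S$, it is diagonal in the very basis $\{v_1,v_2\}$, and since its diagonal entries $\lambda_1^i,\lambda_2^i$ are distinct, its eigenlines are again exactly $L_1$ and $L_2$. As $T$ is nonsingular, $Tv_j\neq 0$, so $Tv_j$ lies in one of the lines $L_1,L_2$; hence $T$ induces a permutation of $\{L_1,L_2\}$.

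Two cases then remain. If $T$ fixes both lines, then $T$ is diagonal in the basis $\{v_1,v_2\}$, as is $S$, so $S$ and $T$ commute and a fortiori $S$ and $T^2$ commute. If instead $T$ interchanges the two lines, then in the basis $\{v_1,v_2\}$ one has $Tv_1=a v_2$ and $Tv_2=b v_1$ with $a,b\neq 0$, whence $T^2v_1=ab\,v_1$ and $T^2v_2=ab\,v_2$, that is $T^2=ab\cdot\mathrm{Id}$ is a scalar operator and therefore commutes with $S$. In either case $ST^2=T^2S$ over $\mathbb{K}$, and the identity descends to $\mathbb{F}$.

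I expect the only delicate point to be the claim that $S^i$ shares the eigenlines $L_1,L_2$ of $S$, since it is tempting to worry that raising to the power $i$ could collapse or move the eigenstructure. This is precisely where the conjugacy $S^i=TST^{-1}$ is used: it forces $S^i$ to be non-scalar with two distinct eigenvalues, so $S^i$ is diagonalized by the same eigenbasis as $S$, and the image lines $T(L_j)$ then have nowhere to go except back into $\{L_1,L_2\}$. Everything else is a short explicit computation in the chosen basis.
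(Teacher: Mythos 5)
Your proof is correct and follows essentially the same route as the paper's: pass to an extension where $S$ has an eigenbasis, dispose of the scalar case, use the conjugacy $S^i=TST^{-1}$ to see that $S^i$ has the same two distinct eigenvalues and hence the same pair of eigenlines as $S$, conclude that $T$ permutes those lines, and therefore that $T^2$ preserves each of them. The only cosmetic difference is that you make the final step explicit via a two-case analysis (both lines fixed versus the lines swapped, giving $T^2$ scalar), where the paper simply records directly that $T^2(v)\in\mathrm{span}(v)$ and $T^2(w)\in\mathrm{span}(w)$.
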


\begin{proof}
Working, if necessary, over the algebraic closure $\overline{\mathbb{F}}$, yields a basis of eigenvectors $v,w\in\overline{\mathbb{F}}^2$ of the diagonalizable operator $S$.
If the corresponding eigenvalues $\lambda_v,\lambda_w$ are equal, then evidently the operator $S$ is scalar and commutes with $T$.
We may therefore assume that $\lambda_v\neq\lambda_w$.
The invertibility condition on $T$ implies that $T(v)$ and $T(w)$ are nonzero. Moreover, by \eqref{comST} $T(v)$ and $T(w)$ are eigenvectors of $S^i$.
Both $v$ and $w$ are also eigenvectors of $S^i$, which is conjugate to $S$ and hence admits the distinct eigenvalues $\lambda_v$ and $\lambda_w$.
We deduce that
$$T^2(v)\in\text{span}_{\overline{\mathbb{F}}}(v),\ \ T^2(w)\in\text{span}_{\overline{\mathbb{F}}}(w).$$ Thus, $S$ and $T^2$ commute.
\end{proof}
\begin{subsubsection}{Proof of Theorem A }\label{Proof of Theorem A}

The ``if" argument follows immediately from Theorem \ref{prop:newctsdpr}.
We prove the ``only if" direction in a way of induction on $r$, that if $G$ is a group of central type of order $n^2=\prod _{i=1}^rp_i^{2}$ (where $\{p_i\}_{i=1}^r$ are distinct primes),
then

(*) {\it $G^{\shortmid}$ is the direct product of all its Sylow subgroups $G_{p_i}$, such that $G_{p_i}=(G_{p_ip_j})^{\shortmid}$ for some $1\le j \le r$.}

By Lemma \ref{th:2com}(1), the condition (*) implies that
$G^{\shortmid}$ is an abelian Hall subgroup of $G$.
If $G^{\shortmid}$ is indeed an abelian Hall subgroup of $G$, then by Theorem \ref{sz}, $G= G^{\shortmid}\rtimes_{\eta} G_0$, where $G_0$ is a Hall subgroup of $G$.
Since Hall subgroups of $G$ are also of central type (Theorem \ref{th:hall}), and since the Hall subgroups $G^{\shortmid}$ and $G_0$ are abelian
and of cube-free orders, then by Theorem \ref{abct}
they are isomorphic to $C_m\times C_m$ and $C_k\times C_k$ for some coprime numbers $m$ and $k$ the product of which is $n$. Furthermore, by Theorem \ref{prop:newctsdpr}, the action $\eta$
of $G_0$ on $G^{\shortmid}$ is via SL$_2(m)$. This will prove the theorem.

Obviously, (*) holds for $r=1,2$.
Let $G$ be a group of central type of cube-free order.
The hypothesis enables us to assume that all the proper subgroups of $G$ that are of central type (these are exactly its Hall subgroups) satisfy (*).
By Lemma \ref{cor:semi}(2) for the case where 6 does not divide the order of $G$, or by Lemma \ref{prop:6primehall} for the case where 6 does divide $|G|$, there exists a prime $p$ which divides
$|G|$ such that the $p^{\shortmid}$-Hall subgroup $G_{p^{\shortmid}}$ is a proper normal subgroup of $G$. We have
\begin{equation}\label{p'p}
G=G_{p^{\shortmid}}\rtimes G_p\cong G_{p^{\shortmid}}\rtimes (C_p\times C_p).
\end{equation}
Next, Theorem \ref{th:hall} tells us that $G_{p^{\shortmid}}$ is of central type.
By the induction hypothesis,
$(G_{p^{\shortmid}})^{\shortmid}$ satisfies (*). In particular it is a Hall subgroup of $G_{p^{\shortmid}}$. Being characteristic in $G_{p^{\shortmid}}$,
it a normal Hall subgroup of $G$. Again, by Theorem \ref{sz}, there exists a Hall subgroup (not necessarily proper) $G_k<G$ such that
\begin{equation}\label{pk}
G=(G_{p^{\shortmid}})^{\shortmid}\rtimes_{\nu} G_k,
\end{equation}
where the action $\nu$ is symplectic.
If $(G_{p^{\shortmid}})^{\shortmid}$ is trivial, then \eqref{p'p} certainly describes $G$ as a semidirect product of abelian Hall subgroups of central type.
Applying Lemma \ref{th:2com}(1) it is not hard to check that in this case $G^{\shortmid}$ is the direct product of the $q$-Sylow subgroups
$G_q$ such that the $pq$-Hall subgroups $G_{pq}$ are non-abelian. This is just to say that $G$ satisfies (*).

We may hence assume that $(G_{p^{\shortmid}})^{\shortmid}$ is nontrivial,
that is $G_k$ is a proper subgroup of $G$. Since $G_k$ is of central type, it satisfies the induction hypothesis (*).
In particular, the commutator $(G_k)^{\shortmid}$ is a Hall subgroup. Theorem \ref{sz} ascertains again the existence of another Hall subgroup $G_l$ such that
$$G_k=(G_k)^{\shortmid}\rtimes G_l.$$
Mutual inclusion verifies that \eqref{pk} yields
\begin{equation}\label{G'pk}
G^{\shortmid}=(G_{p^{\shortmid}})^{\shortmid}\rtimes_{\nu} (G_k)^{\shortmid}
\end{equation}
(we keep the notation $\nu$ also for its restriction to $(G_k)^{\shortmid}$).
By the induction hypothesis, both $(G_{p^{\shortmid}})^{\shortmid}$ and $(G_k)^{\shortmid}$ are direct sums of commutator subgroups of the form $(G_{p_ip_j})^{\shortmid}$.
Therefore, to complete the proof, we need to show that the action in \eqref{G'pk} is trivial. These groups are abelian and hence admit prime decomposition. Therefore, in order
to establish triviality of this action, it is enough to show that under the symplectic action $\nu$
every Sylow subgroup $G_{q_1}<(G_k)^{\shortmid}$ centralizes every Sylow subgroup $G_{q_2}<(G_{p^{\shortmid}})^{\shortmid}$.
Assume that this is not true. Namely, there exist prime divisors $q_1$ and $q_2$ of $|(G_k)^{\shortmid}|$ and $|(G_{p^{\shortmid}})^{\shortmid}|$ respectively
such that $G_{q_1}$ acts nontrivially on $G_{q_2}$. By Lemma \ref{th:2com}(2), the center of $G_{q_1\cdot q_2}$ is cyclic of order $q_1$.

Recall that by the induction hypothesis on $G_k$, there exists a prime $q_3$ (dividing $l$) such that
$G_{q_1}=(G_{q_1\cdot q_3})^{\shortmid}$ for every Hall subgroup $G_{q_1\cdot q_3}<G_k$. In particular, there is an element $t\in G_{q_3}(<G_l)$
conjugating the elements of $G_{q_1}$ nontrivially (by a symplectic action, as $G_{q_1\cdot q_3}=\langle G_{q_1},G_{q_3}\rangle$ is of central type).
Since this nontrivial action has a cyclic kernel of order $q_3$, then
by Lemma \ref{th:2com}(1) we obtain
\begin{equation}\label{[]}
G_{q_1}=[G_{q_1},t].
\end{equation}
Let $\nu_{q_2}:G_{q_1\cdot q_3}\to \text{SL}_2(q_2)$ be the restriction of the action $\nu$ (see \eqref{pk}) from the abelian group $(G_{p^{\shortmid}})^{\shortmid}$ to
its $q_2$-Sylow subgroup $G_{q_2}$.
Our goal is to show that $\nu_{q_2}(G_{q_1})$ commutes with $\nu_{q_2}(t).$ Using \eqref{[]}, this will show that
$$\nu_{q_2}(G_{q_1})=\nu_{q_2}([G_{q_1},t])=[\nu_{q_2}(G_{q_1}),\nu_{q_2}(t)]$$
is trivial, contradicting our assumption that $G_{q_1}$ acts nontrivially on $G_{q_2}$.

Consider the action of $t$ on $G_{q_1\cdot q_2}=\langle G_{q_1},G_{q_2}\rangle$.
Let $$s_1\in{\text Z}(G_{q_1\cdot q_2})$$
be any non-trivial element in the cyclic center of the Hall subgroup $G_{q_1\cdot q_2}$. Since this center is characteristic, there exists an integer $i_1$ such that
\begin{equation}\label{ts1}
ts_1t^{-1}=s_1^{i_1}.
\end{equation}
Furthermore, since $t$ acts on $G_{q_1}$ via SL$_2(q_1)$ with an eigenvector $s_1\in C_{q_1}\times C_{q_1}$ and since its order $q_3$ is prime to $q_1$, this action is diagonalizable.
Hence, there is another eigenvector
$s_2\in C_{q_1}\times C_{q_1}$ and another integer $i_2$ such that
\begin{equation}\label{ts2}
ts_2t^{-1}=s_2^{i_2}.
\end{equation}
Let
$$T=\nu_{q_2}(t), S_1=\nu_{q_2}(s_1), S_2=\nu_{q_2}(s_2).$$
Clearly, since the elements $s_1$ and $s_2$ generate the group $ C_{q_1}\times C_{q_1}$, then the operators $S_1$ and $S_2$ generate $\nu_{q_2}(G_{q_1})$.
It is then enough to show that these operators commute with $T$. Suppose first that $q_1=2$ (and then $q_2>2$). Then $T$ is the unique operator of order 2 in SL$_2(q_2)$, namely the one taking
any element to its inverse, which is central in SL$_2(q_2)$. We may hence assume that the prime $q_1$ is odd.
Now, the operators $S_1$ and $S_2$ are in SL$_2(q_2)$ and of order $q_1$, which is prime to $q_2$. Again, this is enough to ensure that they are diagonalizable.
Moreover, \eqref{ts1} and \eqref{ts2} yield
$$TS_jT^{-1}=S_j^{i_j}, \ \ j=1,2.$$
This permits us to apply Lemma \ref{linealemma} (with $\mathbb{F}=\mathbb{F}_{q_2}$) so as to establish that both $S_1$ and $S_2$ commute with $T^2$.
Since the order of $T$ is $q_1$, which is prime to 2, then $T$ is a power of $T^2$ and so $S_1$ and $S_2$ commute with $T$ itself. This completes the proof. \qed

\end{subsubsection}

\subsubsection{Proof of Corollary B}\label{Proof of Corollary B}
If $\{p_i\}_{i\in \lambda}$ is a set of distinct primes, then clearly
\begin{equation}\label{sl2m}
|\text{SL}_2(\prod_{i\in \lambda}p_i)|=\prod_{i\in \lambda}(p_i-1)p_i(p_i+1).
\end{equation}
Corollary B is a consequence of Theorem A, Lemma \ref{sqrfree} and \eqref{sl2m}.
\qed

\begin{remark}\label{density}
The family of square-free numbers is of density $\frac{6}{\pi ^2}$ in the natural numbers (see \cite[Theorem 2.1]{Sinai}).
The family of numbers $\Lambda$ is of density 0 in $\mathbb{N}$. However, using the Chinese Reminder Theorem and Dirichlet's theorem, one can show that this family is
big in the sense that for any $n\in \Lambda$ there exist infinitely many primes $p$ such that $np\in \Lambda$.
\end{remark}

\subsection{The automorphism action on non-degenerate classes is not necessarily transitive}\label{non-transitive}
The following examples are of groups $G$ of central type such that
the action of Aut$(G)$ on the non-degenerate cohomology classes in
$H^2(G,\mathbb{C}^*)$ admits more than one orbit.
\begin{example}\label{example0}
The action on the non-degenerate classes may be non-transitive even when the corresponding groups of central type are of cube-free order.
Theorem A says that any such group $G$ is a semidirect product
$$(C_m\times C_m)\rtimes_{\eta} (\mathop{C_k}^{\langle x\rangle} \times \mathop{C_k}^{\langle y\rangle}),$$
where $(m,k)=1$ and where the action $\eta$ is via SL$_2(m)$. By Lemma \ref{sl2}(1), all the classes in $H^2(C_m\times C_m,\C^*)$ are stabilized by $\eta$,
and so by \eqref{eq:semdircoh} we have
$$H^2(G,\C^*)=H^2(C_m\times C_m,\C^*)\times H^2(C_k\times C_k,\C^*)(\cong C_{mk}).$$
Any cohomology class $[f]\in H^2(G,\C^*)$ can be decomposed to its $m$ and $k$-parts and so can be given as a pair $(\alpha_{[f_1]},\alpha_{[f_2]})$ of the alternating forms (see \eqref{altform})
associated to its restrictions to the corresponding
subgroups $[f_1]\in H^2(C_m\times C_m,\C^*)$ and $[f_2]\in H^2(C_k\times C_k,\C^*)$.
The following result can be proven using the fact that $m$ and $k$ are coprime and is left to the reader:
Let $n_1,n_2\in C_m\times C_m$ and $h_1,h_2\in C_k\times C_k$ such that $n_1h_1$ and $n_2h_2$ commute (that is $(n_1h_1,n_2h_2)\in\mathcal{A}_G$, see \S\ref{perct}). Then
for every $[f]\in H^2(G,\C^*)$
\begin{equation}\label{alpha2}
\alpha_{[f]}(n_1h_1,n_2h_2)=\alpha_{[f]}(h_1,h_2).
\end{equation}

Let $k$ be an odd prime and let $m=p_1\cdot p_2\cdot p_3$, where $p_1,p_2,p_3$ are distinct primes that are congruent to $\pm 1($mod $k)$.
Then there are nontrivial actions $$\eta_i:\langle x,y\rangle\to \text{SL}_2(p_i),\ \ i=1,2,3,$$
where the kernels of $\eta_1, \eta_2$ and $\eta_3$ are $\langle x\rangle , \langle y\rangle $ and $\langle xy\rangle $ respectively. Let
$$\eta:\langle x,y\rangle\to \text{SL}_2(m)=\prod_{i=1}^{3}\text{SL}_2(p_i)$$
be the combined (faithful) action (see \eqref{sl2m}).

Let $g\in$ ker$(\eta_i)<C_k\times C_k$. Then its image under any automorphism of $G$ should obviously be inside $(C_m\times C_m)\rtimes$ ker$(\eta_i)$.
In particular, for any $\psi\in$Aut$(G)$ we have
\begin{eqnarray}\label{psi}
\begin{array}{ll}
\psi(x)\in(C_m\times C_m)\rtimes\langle x\rangle,& \\ \psi(y)\in(C_m\times C_m)\rtimes\langle y\rangle,& \text{and}\\ \psi(xy)\in(C_m\times C_m)\rtimes\langle xy\rangle.&
\end{array}
\end{eqnarray}
Using \eqref{psi}, it is not hard to check that there exists $1\leq j\leq k-1$ depending only on $\psi$, such that every $g\in C_k\times C_k$
is mapped to $\psi(g)=n_gg^j$ for some $n_g\in C_m\times C_m$.
Let $[f]\in H^2(G,\C^*)$ be given by the pair $(\alpha_{[f_1]},\alpha_{[f_2]})$ of alternating forms associated to the $m$ and $k$-parts of $[f]$.
Then by the above, there exist $1\leq j\leq k-1$ and $n_x,n_y\in C_m\times C_m$ such that
$$\alpha_{\psi([f])}(x,y)=\alpha_{[f]}(\psi^{-1}(x),\psi^{-1}(y))=\alpha_{[f]}(n_xx^j,n_yy^j).$$
Applying \eqref{alpha2} we get
$$\alpha_{\psi([f])}(x,y)=\alpha_{[f]}(x^j,y^j)=\alpha_{[f]}(x,y)^{j^2}.$$
Now, by Lemma \ref{altformorder}, $\alpha_{[f]}(x,y)$ is a $k$-th root of 1.
For any such root $\xi_k$, the set $\{\xi_k^{j^2}\}^{k-1}_{j=1}$ does not cover the entire set of
$k$-th roots of 1 (it contains exactly half of them since $\alpha_{[f]}(x,y)$ is primitive). It follows that there is a non-degenerate class in $H^2(G,\C^*)$ which is not in the Aut$(G)$-orbit of $[f]$.
For example, take $k=3$. Then, for any $\psi\in$Aut$(G)$, the above $j$ is either 1 or 2. In both cases $\xi_3^{j^2}=\xi_3$ and so if $[f],[f']\in H^2(G, \C^*)$ satisfy
$\alpha_{[f]}(x,y)=\xi_3$ and $\alpha_{[f']}(x,y)=\xi_3^2,$ then they do not belong to the same Aut$(G)$-orbit.\qed
\end{example}

Notice that although the above cohomology classes are not in the same Aut$(G)$-orbit, one cannot distinguish between them in practice.
These classes do belong to the same orbit under the following action of Aut$(G)\times$Aut$(\C^*)$ on the (second) cohomology of any group which generalizes \eqref{eq:action}.
Let $[f]\in H^2(G,\C^*)$ and let $(\phi,\rho)\in$Aut$(G)\times$Aut$(\C^*)$.
Then
\begin{equation}\label{genaction}
\begin{aligned}
(\phi,\rho)([f])&=[{(\phi,\rho)}(f)],\ \ \text{where} \\
{(\phi,\rho)}(f)(g_1,g_2):&=\rho(f({\phi}^{-1}(g_1),{\phi}^{-1}(g_2))).
\end{aligned}
\end{equation}
Certainly, the classes $[f],[f']\in H^2(G, \C^*)$ in Example \ref{example0} are in the same orbit under the action \eqref{genaction}.
This is not the case in the next two examples, where non-degenerate classes of the same group
may behave totally different.

\begin{example}\label{example1}
This example is of a family of
groups $G$ of A-type, i.e. which admit only abelian $p$-Sylow subgroups, that are also of central type. In
particular, the action of Aut$(G_p)$ on the non-degenerate
cohomology classes in $H^2(G_p,\mathbb{C}^*)$ is transitive for every $p$-Sylow subgoup $G_p<G$ (see \S\ref{intro}).

Let $q_1, q_2,q_3$ be distinct\footnote{The example holds also when $q_1=q_2$, the argument should be modified accordingly}
primes such that $q_i \equiv 1($mod $q_3),\ \ i=1,2$, and let
$$G_i=\left(\mathop{C_{q_i}}^{\langle x_i\rangle}\times
\mathop{C_{q_i}}^{\langle y_i \rangle}\right)\rtimes_{\eta_i} \left(\mathop{C_{q_3}}^{\langle
\sigma _i \rangle}\times \mathop{C_{q_3}}^{\langle \tau _i \rangle}\right),\quad i=1,2.$$
The actions are given by
\begin{eqnarray}
\begin{array}{rcl}
\eta_i:\langle \sigma_i, \tau_i\rangle &\to &\text{Aut}(\langle x_i, y_i\rangle)\\
\eta_i(\tau_i):& &x_i\mapsto x_i,\\
& &y_i\mapsto y_i,\\
\eta_i(\sigma_i):& & x_i\mapsto x_i^{m_i},\\
& &y_i\mapsto y_i^{n_i},
\end{array}
\end{eqnarray} where for $i=1,2$, $m_i,n_i\in\mathbb{Z}$ satisfy
\begin{equation}\label{omzet}
m_i\cdot n_i\equiv1(\text{mod } q_i)
\end{equation}
and
$$m_i^{q_3}\equiv1(\text{mod } q_i),\quad m_i\not\equiv 1(\text{mod } q_i).$$
Condition \eqref{omzet} says that for $i=1,2$, the actions $\eta_i$ take their images in SL$_2(q_i)$ (identified as before as a subgroup of $\text{Aut}(\langle x_i, y_i\rangle)$),
and therefore by Lemma \ref{sl2}(2) both are symplectic.
By Theorem~\ref{prop:newctsdpr}, the groups $G_1$ and $G_2$ are of central type and by Lemma~\ref{remark:directct} so is
their direct product
$$G:=G_1\times G_2.$$
Since $m_i$ and $n_i$ are not congruent to 1(mod $ q_i$), there are no non-trivial
elements in $C_{q_i}\times C_{q_i}$
which are fixed under the action $\eta_i$. Hence, the center of $G$ is
\begin{equation}\label{centerexample}
Z(G)=\langle \tau_1,\tau _2 \rangle.
\end{equation}
We can write the group $G$ as a semidirect product of its Hall subgroups
$$G=G_{q_1\cdot q_2}\rtimes_{\eta} G_{q_3},$$
where
$$G_{q_3}=\langle \sigma _1, \sigma _2, \tau _1, \tau_2\rangle\cong C_{q_3}^4,\ \ G_{q_1\cdot q_2}=
\langle x_1\cdot x_2,y_1\cdot y_2\rangle\cong C_{q_1\cdot q_2}\times C_{q_1\cdot q_2},$$
and the action $\eta:G_{q_3}\to$SL$_2(q_1\cdot q_2)$ is determined by the actions $\eta_1$ and $\eta_2$ in an obvious way.

Let $[f_1],[f_2]\in H^2(G_{q_3},\C^*)$ be two cohomology classes determined
by their associated $G_{q_3}$-form (see \eqref{altform}) on the generators $\sigma_1,\tau_1,\sigma_2,\tau_2$.
\begin{equation}\label{f1f2}
\begin{array}{ll}
\alpha_{f_1}:(\tau _1,\sigma _1),(\tau _2,\sigma _2)\mapsto\xi _{q_3},&
(\tau _1,\sigma _2),(\tau _2,\sigma_1),(\tau _1,\tau _2),(\sigma _1,\sigma_2)\mapsto 1,\\
\alpha_{f_2}:(\tau _1,\tau _2),(\sigma _1,\sigma _2)\mapsto\xi _{q_3},&
(\tau _i,\sigma _j)\mapsto 1, \ \ 1\leq i,j\leq 2.
\end{array}
\end{equation}
Here, $\xi _{q_3}$ is a primitive
${q_3}$-\textit{th} root of unity. By~\eqref{eq:ndcohonab}, the two
cohomology classes $[f_1]$ and $[f_2]$ are non-degenerate.

Next, since the action $\eta$ of $G_{q_3}$ on $G_{q_1\cdot q_2}$ is via SL$_2(q_1\cdot q_2)$, then by Lemma \ref{sl2}(1) we have
\begin{equation}\label{slactriv}
H^2(G_{q_1\cdot q_2},\C^*)^{G_{q_3}}=H^2(G_{q_1\cdot q_2},\C^*).
\end{equation}
By \eqref{eq:semdircoh} and \eqref{slactriv} we get
$$H^2(G,\mathbb{C}^*)=H^2(G_{q_1\cdot q_2},\C^*)\times H^2(G_{q_3},\C^*).$$
Thus, every cohomology class of $G$ is given by a pair of classes $[f]\in H^2(G_{q_1\cdot q_2},\C^*)$ and
$[f']\in H^2(G_{q_3},\C^*)$.
By Theorem \ref{th:hall}, $([f],[f'])\in H^2(G,\C^*)$ is non-degenerate if and only if so are both classes $[f]$ and $[f']$.

Now, the group $G_{q_1\cdot q_2}$ is of central type and so admits non-degenerate cohomology classes.
For every such non-degenerate class $[f]\in H^2(G_{q_1\cdot q_2},\C^*)$, the cohomology classes
$$([f],[f_1]), ([f],[f_2])\in H^2(G,\C^*)$$
(where the classes $[f_1],[f_2]$ are given in \eqref{f1f2})
are non-degenerate. We claim that these two classes lie in different Aut$(G)$-orbits.
Indeed, examine the restriction of the symplectic forms associated to both classes to the center
$\langle \tau_1,\tau _2 \rangle$ of $G$ (see \eqref{centerexample}). We have
$\alpha_{([f],[f_1])}(\tau_1,\tau_2)=1$, whereas $\alpha_{([f],[f_2])}(\tau_1,\tau_2)=\xi _{q_3}$.
That is, on this subgroup one form is trivial while the other is not (it is even non-degenerate).
Since the center is characteristic, there exists no
$G$-automorphism taking one of the above classes to the other.\qed
\end{example}

\begin{example}\label{example2}
Next, we present another, essentially different example of a
$2$-group $G$ of central type such that the action of Aut$(G)$ on
the non-degenerate cohomology classes in $H^2(G,\mathbb{C}^*)$ is
not transitive.
The group $G=\check{A}\rtimes Q$ is a semidirect product of a group $Q$ acting on the character group $\check{A}$
of a $Q$-module $A$. Here $\check{A}:=$Hom$(A,\C^*)$ is endowed with the diagonal $Q$-action
\begin{equation}\label{diag}
\langle q(\chi),a\rangle:=\langle\chi,q^{-1}(a)\rangle,
\end{equation}
for every $q\in Q, \chi\in \check{A}$, and evaluated at any $a\in A$.
We apply a technique developed in \cite{eg3} to construct a non-degenerate 2-cocycle on the
above semidirect product from a bijective 1-cocycle $\pi:Q\to A$.
Let $Q_0:=C_4=\langle x\rangle$ act on the Klein 4-group
$A_0:=C_2\times C_2=\langle\sigma\rangle\times\langle\tau\rangle$ as follows. The generator
$x$ interchanges the generators $\sigma$ and $\tau$ of $A_0$. Then it is not hard to check that the
following correspondence is a bijective 1-cocycle
$$\begin{array}{c}
\pi_0:Q_0\to A_0\\
1\mapsto 1,\ \  x\mapsto \sigma, \ \ x^2 \mapsto \sigma\tau,\ \
x^3\mapsto \tau.
\end{array}$$
Next, we extend the 1-cocycle $\pi_0$ to
$$\begin{array}{rcl}
\pi:Q_0\times Q_0&\to &A_0\times A_0\\
(q_1,q_2)&\mapsto &(\pi_0(q_1),\pi_0(q_2)),
\end{array}$$
where $A:=A_0\times A_0$ is endowed with the componentwise $Q:=Q_0\times
Q_0$-action, inducing the diagonal $Q$-action \eqref{diag} on $\check{A}$. Then $\pi$ is a bijective 1-cocycle, such that
$[\pi]\in H^1(Q,A)$ is of order 2.
Let $$G:=\check{A}\rtimes Q=(\check{A_0}\times \check{A_0})\rtimes (Q_0\times Q_0).$$
Using the terminology of \cite[(3.6)]{bg}, the class
$$[c_{\pi}]\in H^2(G,\mathbb{C}^*)$$ is non-degenerate.
Note that
\begin{equation}\label{rescpi}
\text{res}^G_{Q}[c_{\pi}]=1,
\end{equation}
and that the
order of $[c_{\pi}]$ is 2, since the values of $c_{\pi}$ are either
1 or $-1$. On the other hand, let $[c]\in H^2(Q,\mathbb{C}^*)$ be the cohomology class determined by its associated $Q$-form (see \eqref{altform})
on the generators $(x,1)$ and $(1,x)$
$$\alpha_c((x,1),(1,x))=i.$$
The class $[c]$ is of order 4 (the fact that it is also non-degenerate
is irrelevant). By the splitting of $Q$ in $G$ we have
\begin{equation}\label{resinf}
\text{res}^G_Q(\text{inf}^{Q}_G[c])=[c].
\end{equation}
Hence, inf$^Q_G[c]$ is also of order 4 in $H^2(G,\mathbb{C}^*)$.

By \cite[\S{4}]{bg}, the class
$$[\beta]:=[c_{\pi}]\cdot\text{inf}^{Q}_G[c]\in H^2(G,\mathbb{C}^*)$$
is non-degenerate. Moreover, by \eqref{rescpi} and \eqref{resinf}
$$\text{res}^G_{Q}[\beta]=
\text{res}^G_{Q}[c_{\pi}]\cdot\text{res}^G_{Q}(\text{inf}^{Q}_G[c]) =1\cdot[c]=[c]$$ is of order 4 in $H^2(Q,\C^*)$,
then so is the order of $[\beta]$ in $H^2(G,\mathbb{C}^*)$.

To sum up, $H^2(G,\mathbb{C}^*)$ admits two non-degenerate
classes, $[c_{\pi}]$ and $[\beta]$, which cannot be in the same
orbit under the Aut$(G)$-action for they are of distinct orders- 2
and 4 respectively.\qed
\end{example}

\section {Maximal connected gradings of semisimple algebras}
Our objective in this section is to study the maximal connected grading classes, first of finite-dimensional simple complex algebras (\S\ref{mcsg}),
and then of general finite-dimensional semisimple complex algebras (\S \ref{proofD}). Special semisimple families, namely diagonal, bisimple and completely graded decomposition
algebras are discussed in \S\ref{diagsec}, \S\ref{bisimpsec} and \S\ref{cgda} respectively.

By Theorem \ref{prop:decsemsi}, any grading of such an algebra $A$ admits a decomposition as a direct sum of graded-simple algebras.
When the grading of $A$ is connected, we may think of all the graded-simple summands as connected by appropriate well-defined subgroups (see \S\ref{cps}). The following claim is straightforward.
\begin{lemma}\label{decompequiv}
Let \eqref{eq:equivgr} be connected gradings of finite-dimensional semisimple algebras $A$ and $B$.
Then any graded equivalence $(\psi,\phi)$ between these gradings determines a bijection between the graded-simple summands $\{A_i\}_{i=1}^r$ of $A$ and $\{B_j\}_{j=1}^s$ of $B$ (in particular $r=s$),
as well as graded-equivalences $(\psi_i,\phi_i)$ between the connected-gradings of the summands $A_i$ and $\psi(A_i)$ for every $1\leq i\leq r$.
\end{lemma}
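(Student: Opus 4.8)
The plan is to recast the decomposition of Theorem~\ref{prop:decsemsi} in an \emph{intrinsic} form, so that compatibility with a graded equivalence becomes automatic. Concretely, I would prove that for a connected $G$-grading of a semisimple algebra $A$, the graded-simple summands $A_1,\dots,A_r$ are precisely the minimal nonzero graded two-sided ideals of $A$. Once this characterization is in place, the bijection and the summand-wise equivalences fall out of the fact that an algebra isomorphism $\psi$ with $\psi(A_g)=B_{\phi(g)}$ carries the lattice of graded ideals of $A$ isomorphically onto that of $B$.

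First I would record the two inclusions making up the characterization. A graded two-sided ideal of $A$ contained in a summand $A_i$ is the same thing as a graded ideal of the algebra $A_i$ (using $A_iA_j=0$ for $i\neq j$), so graded-simplicity of $A_i$ forces it to be $0$ or $A_i$; hence each $A_i$ is a minimal nonzero graded ideal. Conversely, let $I$ be any minimal nonzero graded ideal. Since $A$ is semisimple, $I$ is unital and $I=IA=\bigoplus_i IA_i$; each $IA_i$ is a graded ideal (a product of graded subspaces is graded) contained in both $I$ and $A_i$, so by minimality exactly one of them equals $I$, giving $I\subseteq A_i$ and then $I=A_i$ by graded-simplicity of $A_i$. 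This proves the characterization and, as a byproduct, the uniqueness of the graded-simple decomposition.

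Next I would transfer the structure along $(\psi,\phi)$. Because $\psi$ is an algebra isomorphism sending homogeneous elements to homogeneous elements (so that $\psi(A_g)=B_{\phi(g)}$), it restricts to a bijection between graded subspaces and between graded ideals of $A$ and of $B$, preserving inclusions and hence minimality. Applying the characterization to both $A$ and $B$, $\psi$ must permute the minimal nonzero graded ideals, that is, it induces a bijection $i\mapsto\sigma(i)$ with $\psi(A_i)=B_{\sigma(i)}$; in particular $r=s$. Restricting then yields algebra isomorphisms $\psi_i:=\psi|_{A_i}\colon A_i\to\psi(A_i)$. For the group side, recall from~\S\ref{cps} that $A_i$ is connectedly graded by the subgroup $\langle g\in G\mid (A_i)_g\neq0\rangle$; since $\psi(A_i)=B_{\sigma(i)}$ and $\psi(A_g)=B_{\phi(g)}$, the isomorphism $\phi$ maps this subgroup onto the analogous subgroup of $H$ attached to $B_{\sigma(i)}$, so $(\psi_i,\phi_i)$, with $\phi_i$ the corresponding restriction of $\phi$, is the desired graded equivalence between the connected gradings of $A_i$ and $\psi(A_i)$.

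The main obstacle is the intrinsic characterization of the summands, that is, showing that the graded-simple decomposition is canonical rather than merely existent; everything after that is formal. A technical point worth isolating is that the identity element of each graded ideal lies in the identity component $A_e$: if one prefers to argue through the central idempotents $\epsilon_i$ cutting out the $A_i$, one must first check $\epsilon_i\in A_e$, which follows by comparing homogeneous degrees in the identities $\epsilon_i a=a=a\epsilon_i$ for homogeneous $a\in A_i$. Working directly with minimal graded ideals, as above, sidesteps this, but it relies on the same underlying fact that graded ideals and their products remain graded.
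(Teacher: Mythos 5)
Your proof is correct and complete. The paper offers no argument for this lemma (it is dismissed as ``straightforward''), so there is nothing to compare against; your route via the intrinsic characterization of the graded-simple summands as the minimal nonzero graded two-sided ideals is the natural way to make the claim rigorous, and all the supporting steps (gradedness of products of graded ideals, the identity $I=IA=\bigoplus_i IA_i$, the transfer of the ideal lattice along $\psi$ using $\psi(A_g)=B_{\phi(g)}$, and the restriction of $\phi$ to the subgroups generated by the supports) check out.
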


The next result is somehow counterintuitive.
Let $(\psi,\phi)$ be a quotient graded-morphism from a graded algebra $A=\bigoplus _{g\in G} A_g$ to a graded algebra $B=\bigoplus _{h\in H} B_h$. If $I$ is a non-trivial graded ideal of $A$
then $\psi(I)$ is a non-trivial graded ideal of $B$.
Consequently, we have
\begin{lemma}\label{simplequotient}
Let $\mathcal{G}_B:B=\bigoplus _{h\in H} B_h$ be a quotient-grading of $\mathcal{G}_A:A=\bigoplus_{g\in G} A_g$. If the $H$-grading of $B$ is simple, then so is the $G$-grading of $A$.
\end{lemma}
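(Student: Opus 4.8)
The plan is to argue by contrapositive, exploiting the observation recorded immediately before the statement: a quotient graded-morphism carries non-trivial graded ideals of the source to non-trivial graded ideals of the target. Concretely, I would write $(\psi,\phi)$ for the quotient graded-morphism realizing $\mathcal{G}_B$ as a quotient of $\mathcal{G}_A$, so that $\psi:A\to B$ is an algebra isomorphism, $\phi:G\to H$ is surjective, and $\psi(A_g)\subseteq B_{\phi(g)}$ for all $g\in G$. I would then suppose, contrary to the assertion, that the $G$-grading of $A$ is not simple and pick a non-trivial graded ideal $I\lhd A$. The goal is to produce from $I$ a non-trivial graded ideal of $B$, contradicting the $H$-graded-simplicity of $B$.

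First I would check that $\psi(I)$ is an ideal of $B$: this is immediate since $\psi$ is an algebra isomorphism and $I$ is a two-sided ideal of $A$, and $\psi(I)$ is non-trivial (neither $0$ nor all of $B$) because $\psi$ is a bijection while $I$ is neither $0$ nor $A$. The only point genuinely using the grading is that $\psi(I)$ is again a graded-subspace of $B$. Here I would invoke that $I$ is a graded-subspace, so $I=\bigoplus_{g\in G}(I\cap A_g)$, together with the graded-morphism condition, to get
$$\psi(I)=\sum_{g\in G}\psi(I\cap A_g),\qquad \psi(I\cap A_g)\subseteq \psi(A_g)\subseteq B_{\phi(g)}.$$
Thus $\psi(I)$ is spanned by homogeneous elements of $B$. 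Since a subspace of a graded vector space spanned by homogeneous elements is automatically a graded-subspace (each homogeneous component of any of its elements again lies in it), one concludes $\psi(I)=\bigoplus_{h\in H}(\psi(I)\cap B_h)$, i.e.\ $\psi(I)$ is a graded ideal of $B$.

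Combining the two points, $\psi(I)$ is a non-trivial graded ideal of the $H$-graded-simple algebra $B$, which is the desired contradiction; hence $A$ admits no non-trivial graded ideal, so its $G$-grading is simple. I do not anticipate a serious obstacle here: the argument is a routine transport of structure across the algebra isomorphism $\psi$, and the surjectivity of $\phi$ plays no role whatsoever. The only place requiring a moment's care is the transfer of the graded-subspace property, where it is essential that $\psi$ respects the gradings in the strong sense $\psi(A_g)\subseteq B_{\phi(g)}$; without this, $\psi(I)$ would remain an ordinary ideal but need not be homogeneous, and the conclusion could fail.
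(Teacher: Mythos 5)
Your proposal is correct and is essentially the paper's own argument: the authors record exactly the observation that a quotient graded-morphism $(\psi,\phi)$ sends a non-trivial graded ideal $I\lhd A$ to the non-trivial graded ideal $\psi(I)\lhd B$, and deduce the lemma by contraposition. Your additional verification that $\psi(I)$ is graded (being spanned by homogeneous elements via $\psi(I\cap A_g)\subseteq B_{\phi(g)}$) just makes explicit what the paper leaves implicit.
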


\subsection{Maximal connected simple gradings}\label{mcsg}
{With the notation of Theorem \ref{th:BSZ}, let $m=\epsilon(x)$ and let \eqref{AW} be the decomposition of the twisted group algebra $\C^fG$ to its simple summands,
each of which corresponds to an irreducible $f$-projective representation of the subgroup $G$ of $\Gamma$. The following claim is clear.
\begin{lemma}\label{indform}
With the above notation, $x\left(\C^fG\right)$ is a $\Gamma$-simple grading class
of the semisimple algebra
\begin{equation}\label{Adec}
A=\bigoplus_{[W]\in\text{Irr}(G,f)}M_{n(W)}(\C),
\end{equation}
where $n(W)=m\cdot\dim_{\C}(W)$.
\end{lemma}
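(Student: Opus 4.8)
The plan is to reduce the statement to the Artin--Wedderburn decomposition \eqref{AW} of $\C^fG$ together with the algebra isomorphism \eqref{eq:vecinduce} governing induced gradings. The $\Gamma$-simplicity of any grading induced from the (graded-simple) twisted group algebra $\C^fG$ has already been recorded in the remark preceding Theorem \ref{th:BSZ}, so the only remaining task is to identify the underlying algebra of $x(\C^fG)$ and to read off the matrix sizes $n(W)$.

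First I would let $V$ be a $\Gamma$-graded vector space affording $x$ as its character, so that $\dim_\C(V)=\epsilon(x)=m$ and, by \eqref{eq:vecinduce}, the induced grading $x(\C^fG)$ is graded isomorphic to $V\otimes\C^fG\otimes V^*\cong\C^fG\otimes\text{End}_\C(V)$. Forgetting the grading, the underlying algebra is $\C^fG\otimes M_m(\C)$. Substituting the decomposition \eqref{AW} then gives
$$
\C^fG\otimes M_m(\C)\cong\Bigl(\bigoplus_{[W]\in\text{Irr}(G,f)}\text{End}_\C(W)\Bigr)\otimes M_m(\C)\cong\bigoplus_{[W]\in\text{Irr}(G,f)}\bigl(\text{End}_\C(W)\otimes M_m(\C)\bigr).
$$
Applying the standard isomorphism $M_a(\C)\otimes M_b(\C)\cong M_{ab}(\C)$ with $a=\dim_\C(W)$ and $b=m$ to each summand yields $M_{m\cdot\dim_\C(W)}(\C)$, so the underlying algebra of $x(\C^fG)$ is $A=\bigoplus_{[W]}M_{n(W)}(\C)$ with $n(W)=m\cdot\dim_\C(W)$, exactly as claimed.

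There is no genuine obstacle here, and the claim is deservedly labelled as clear; the only points requiring a word of care are bookkeeping. One should note that $x(\C^fG)$ is well defined up to $\Gamma$-graded isomorphism independently of the chosen $V$, by the correspondence between $\N\Gamma$ and elementary grading classes recorded after Corollary \ref{elem}. As a consistency check, the dimensions match those prescribed by Theorem \ref{th:BSZ}: since $\sum_{[W]}(\dim_\C W)^2=\dim_\C(\C^fG)=|G|$ by \eqref{AW}, we obtain $\dim_\C A=\sum_{[W]}n(W)^2=m^2\sum_{[W]}(\dim_\C W)^2=m^2|G|=|G|\cdot\epsilon(x)^2$. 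Finally, combining the identification above with the cited fact that induction of the graded-simple algebra $\C^fG$ produces a $\Gamma$-simple grading completes the proof.
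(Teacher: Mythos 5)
Your proposal is correct and is exactly the argument the authors intend: the paper offers no proof, declaring the claim clear after setting up the notation, and the natural way to fill that in is precisely your combination of the identification $x(\C^fG)\cong \C^fG\otimes M_m(\C)$ from \eqref{eq:vecinduce}, the Artin--Wedderburn decomposition \eqref{AW}, the isomorphism $M_a(\C)\otimes M_b(\C)\cong M_{ab}(\C)$, and the remark before Theorem \ref{th:BSZ} that induction preserves graded-simplicity. The dimension consistency check against Theorem \ref{th:BSZ} is a nice touch but not logically needed.
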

We now specify a group $\Gamma$ as well as a $\Gamma$-simple grading class $x\left(\C^fG\right)$ of the semisimple algebra $A$ given in \eqref{Adec}.
Fix any set of generators $\{x_1,\cdots,x_{{m}-1}\}$ of the free group $\mathcal{F}_{{m}-1}$ of rank $m-1$.
Let
\begin{equation}\label{F*G}
\Gamma=\mathcal{F}_{m-1}*G
\end{equation}
be the free product of $G$ and $\mathcal{F}_{{m}-1}$, and let $$\tilde{x}=1+\sum_{i=1}^{{m}-1}x_i\in\N\Gamma.$$
Obviously, the $\Gamma$-simple grading class $\tilde{x}\left(\C^fG\right)$ of $A$ is connected. In fact, we claim that it is maximal connected.
To prove it we need

\begin{lemma}\label{hatvstilde}
With the above notation, if $\hat{x}(\C^fG)$ is the same grading class as $\tilde{x}(\C^fG)$, then $(\tilde{x},\hat{x})\in R_G$.
In particular, $\hat{x}=g+\sum_{j=1}^{m-1}\hat{x}_j$,
where $g\in G$, and $\hat{x}_j$ are torsion-free for every $1\leq j\leq m-1$.
\end{lemma}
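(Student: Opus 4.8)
The plan is to read off the claim from the classification of graded-isomorphic graded-simple algebras in Theorem~\ref{th:AHequi}, using the one structural feature of the free product $\Gamma=\mathcal{F}_{m-1}*G$ that drives the argument: the factor $G$ is its own normalizer.

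First I would note that both $\tilde{x}(\C^fG)$ and $\hat{x}(\C^fG)$ are $\Gamma$-graded-simple gradings of $A$, since $\C^fG$ is $G$-graded-simple and induction preserves graded-simplicity (Lemma~\ref{indform}). Hence Theorem~\ref{th:AHequi} applies with $G_1=G_2=\Gamma$, $H_1=H_2=G$ and $f_1=f_2=f$. Because the two gradings are graded-isomorphic and their grading groups coincide, the graded-isomorphism criterion of that theorem forces the accompanying group isomorphism to be the identity on $\Gamma$ and supplies an element $g\in\Gamma$ with
\[
gGg^{-1}=G,\qquad (\hat{x}g^{-1},\tilde{x})\in R_G ;
\]
the theorem's remaining cocycle condition plays no role in what follows.

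The crux is to improve $g\in\Gamma$ to $g\in G$, that is, to prove $\mathcal{N}_\Gamma(G)=G$; I expect this to be the only genuinely nonformal step. This is precisely where the free product is used: for $m\ge 2$ the factor $\mathcal{F}_{m-1}$ is nontrivial, and a nontrivial free factor of a free product is self-normalizing --- for instance, in the action of $\Gamma$ on its Bass--Serre tree the vertex group $G$ fixes a unique vertex, the edge stabilizers being trivial, so any $g$ with $gGg^{-1}=G$ fixes that vertex and lies in $G$ (the case $m=1$, where $\Gamma=G$, is immediate). Once $g\in G$, right multiplication by $g$ preserves $R_G$-classes, so $(\hat{x},\hat{x}g^{-1})\in R_G$; transitivity of $R_G$ together with $(\hat{x}g^{-1},\tilde{x})\in R_G$ then yields $(\tilde{x},\hat{x})\in R_G$, which is the first assertion.

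It remains to extract the displayed normal form. Matching syllables through $R_G$ and writing $\tilde{x}=1+\sum_{i=1}^{m-1}x_i$, the relation $(\tilde{x},\hat{x})\in R_G$ gives $\hat{x}=g+\sum_{i=1}^{m-1}x_ig_i$ for some $g,g_1,\dots,g_{m-1}\in G$, so I set $\hat{x}_j:=x_jg_j$. Since $\mathcal{F}_{m-1}$ is torsion-free, every element of finite order in $\Gamma$ is conjugate into $G$; but each $\hat{x}_j$ is cyclically reduced and either equals the free generator $x_j$ or is a length-two word with one syllable in each factor, hence is not conjugate into $G$ and so has infinite order. Thus every $\hat{x}_j$ is torsion-free, which completes the argument.
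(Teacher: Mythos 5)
Your proof is correct and takes essentially the same route as the paper's, whose entire argument is the one-liner ``apply Theorem~\ref{th:AHequi}, bearing in mind that the normalizer of $G$ in $\mathcal{F}_{m-1}*G$ is $G$ itself''; you merely supply the details that the paper leaves implicit (the Bass--Serre argument for self-normalization and the syllable/torsion analysis for the normal form). The one point of divergence is that you invoke the graded-\emph{isomorphism} criterion where the paper cites the graded-\emph{equivalence} condition; since a genuine graded-equivalence would carry an automorphism of $\Gamma$ that the conclusion $(\tilde{x},\hat{x})\in R_G$ cannot absorb (e.g.\ $x_1\mapsto hx_1$ with $e\neq h\in G$ fixes $G$ and $[f]$ but moves $\tilde{x}$ out of its $R_G$-class), your reading is the one under which the statement is literally true, and the ambiguity lies in the paper rather than in your argument.
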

\begin{proof}
The lemma follows from the condition for graded-equivalence in Theorem \ref{th:AHequi}, bearing in mind that the normalizer of $G$ in \eqref{F*G} is $G$ itself.
\end{proof}
We can now claim
\begin{lemma}\label{xcfg}(see \cite[Proposition 2.31]{EK13})
With the above notation, the $\Gamma$-simple grading class
\begin{equation}\label{scgr}
(1+\sum_{i=1}^{m-1}x_i)(\C^fG)
\end{equation} is maximal.
\end{lemma}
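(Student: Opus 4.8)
The plan is to prove maximality directly from the definition of the quotient partial order (Proposition \ref{poset}). Write $\mathcal{G}:=\tilde{x}(\C^fG)$ for the grading class \eqref{scgr}, where $\tilde{x}=1+\sum_{i=1}^{m-1}x_i$ and $\Gamma=\mathcal{F}_{m-1}*G$. Suppose $\mathcal{G}$ is a quotient of a connected grading $\mathcal{G}'$ of an algebra $A'\cong A$ by a group $\Gamma'$, via a quotient morphism $(\psi,\phi)$ with $\phi\colon\Gamma'\twoheadrightarrow\Gamma$; I must show $\phi$ is an isomorphism, so put $N:=\ker\phi\lhd\Gamma'$ and aim at $N=\{e\}$. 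First I would upgrade $\mathcal{G}'$ to twisted-group-algebra shape: since $\mathcal{G}$ is $\Gamma$-simple (Theorem \ref{th:BSZ}), Lemma \ref{simplequotient} forces $\mathcal{G}'$ to be $\Gamma'$-simple, so Theorem \ref{th:BSZ} gives $\hat G<\Gamma'$, $\hat f\in Z^2(\hat G,\C^*)$ and $\hat x\in\N\Gamma'$ with $\mathcal{G}'\cong_{\mathrm{gr}}\hat x(\C^{\hat f}\hat G)$, which Remark \ref{1inx} lets me normalize to $\hat x=1+\sum_j\hat z_j$. Pushing this through the quotient by Lemma \ref{lemma:quoinduce} identifies $\mathcal{G}$ with $\overline{\hat x}(\bar B)$, where $\bar B$ is $\C^{\hat f}\hat G$ with the $\bar{\hat G}:=\hat G/(\hat G\cap N)$-quotient grading; since a graded ideal of $\bar B$ would induce one in $\overline{\hat x}(\bar B)\cong\mathcal{G}$, graded-simplicity of $\mathcal{G}$ forces $\bar B$ graded-simple, and Theorem \ref{th:BSZ} again yields $\bar B\cong_{\mathrm{gr}}x_0(\C^{f_0}G_0)$ with $G_0\le\bar{\hat G}$.

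Next I would compare the two descriptions $\tilde{x}(\C^fG)\cong_{\mathrm{gr}}(\overline{\hat x}\,x_0)(\C^{f_0}G_0)$ of the single $\Gamma$-grading $\mathcal{G}$ through the graded-isomorphism criterion of Theorem \ref{th:AHequi}. This produces $g\in\Gamma$ with $G_0=gGg^{-1}$, $[f_0]=[gfg^{-1}]$ and $(\overline{\hat x}\,x_0\,g^{-1},\tilde{x})\in R_G$. In particular $\bar{\hat G}$ is a finite subgroup of the free product $\Gamma=\mathcal{F}_{m-1}*G$ containing the conjugate $G_0=gGg^{-1}$ of the free factor. By the Kurosh subgroup theorem a finite subgroup of a free product is subconjugate to a factor, and as $\mathcal{F}_{m-1}$ is torsion-free, $\bar{\hat G}$ is conjugate into $G$; comparing orders with $G_0\le\bar{\hat G}$ and $|G_0|=|G|$ forces $\bar{\hat G}=G_0=gGg^{-1}$.

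The crux, and the step I expect to be the main obstacle, is to show $\hat G\cap N=\{e\}$, equivalently $\epsilon(x_0)=1$. Applying the augmentation to $(\overline{\hat x}\,x_0\,g^{-1},\tilde{x})\in R_G$ gives $m=\hat m\cdot\epsilon(x_0)$ with $\hat m=\epsilon(\hat x)$, while the dimension formula of Theorem \ref{th:BSZ} gives $|\hat G|=|G|\,\epsilon(x_0)^2$ and hence $|\hat G\cap N|=\epsilon(x_0)^2$. To kill this I would exploit that the free generators in $\tilde{x}$ lie in pairwise distinct right $G$-cosets of $\Gamma$ (since $x_i^{-1}x_j\notin G$ for $i\neq j$), so the $m=\hat m\,\epsilon(x_0)$ terms of $\overline{\hat x}\,x_0$ must occupy $m$ distinct right $G$-cosets; feeding in $x_0\in\N G_0=\N(gGg^{-1})$ together with the free-product rigidity $N_\Gamma(G)=G$ and the triviality of the intersection of distinct conjugates of the factor $G$ (the very facts underlying Lemma \ref{hatvstilde}) collapses $x_0$ to a single group element, i.e. $\epsilon(x_0)=1$. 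Then $\hat G\cap N=\{e\}$ and $\phi$ restricts to an isomorphism $\hat G\xrightarrow{\sim}gGg^{-1}$. The genuine difficulty is concentrated in this coset bookkeeping, which is really the maximal-connectedness of the twisted group algebra $\C^fG$ married to the freeness of $\mathcal{F}_{m-1}$.

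Finally I would close on the free part. After the normalization of Lemma \ref{hatvstilde}, $\phi$ carries the $\hat z_j$ to the free generators $x_j$, so the universal property of the free product supplies a section $s\colon\Gamma=\mathcal{F}_{m-1}*G\to\Gamma'$ with $s(x_j)=\hat z_j$ and $s|_G=(\phi|_{\hat G})^{-1}$, satisfying $\phi\circ s=\mathrm{id}_\Gamma$. Since $\mathcal{G}'$ is connected its support generates $\Gamma'$, and by \eqref{eq:suppinduce} this support lies in $\langle\hat G,\hat z_1,\dots,\hat z_{\hat m-1}\rangle=s(\Gamma)$; hence $s$ is onto, $\phi$ is injective, and $(\psi,\phi)$ is a graded-equivalence. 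By Proposition \ref{poset} this shows that no connected grading properly refines $\mathcal{G}$, so the class \eqref{scgr} is maximal.
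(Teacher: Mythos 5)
Your proposal follows essentially the same route as the paper's proof: reduce to a graded-simple class $\hat{x}(\C^{\hat{f}}\hat{G})$ via Lemma \ref{simplequotient} and Theorem \ref{th:BSZ}, normalize by Remark \ref{1inx}, push through the quotient with Lemma \ref{lemma:quoinduce}, force the finite part of the inducing element to collapse to a single group element by the right-$G$-coset comparison with $\tilde{x}$ (Lemma \ref{hatvstilde} together with Theorem \ref{th:AHequi}), and close with the free-product universal property to show $\phi$ is injective. The only cosmetic differences are your explicit appeal to the Kurosh subgroup theorem and the detour through the quotient grading of $\C^{\hat{f}}\hat{G}$ before re-applying Theorem \ref{th:BSZ}; neither changes the substance of the argument.
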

\begin{proof}
Suppose that the class \eqref{scgr} is a quotient of some $\hat{\Gamma}$-grading class $[\mathcal{G}]$. By Lemma \ref{simplequotient}, the grading class $[\mathcal{G}]$ is simple and hence
is of the form $\hat{y}(\C^{\hat{f}}\hat{G})$.
Let $$\phi:\hat{\Gamma}\to\mathcal{F}_{m-1}*G$$
be a surjective group homomorphism realizing the quotient morphism.
Since graded morphisms respect induction (see Lemma \ref{lemma:quoinduce}),
the graded subalgebra (see \S\ref{inductionsec}) $\C^{\hat{f}}\hat{G}$ is mapped to a class $x(\C^{f}G)$ for some $x\in \N\phi(\hat{G})$ such that \eqref{scgr} is the same grading class
as $\bar{\phi}(\hat{y})\cdot x(\C^{f}G)$ (with the extension notation \eqref{extended}).
Now, by the normalization as described in Remark \ref{1inx} we may assume that $\hat{y}=1+\sum_{j=1}^{\epsilon(\hat{y})-1}\hat{y}_j.$
Consequently, \begin{equation}\label{barhat}
\bar{\phi}(\hat{y})\cdot x=(1+\sum_{j=1}^{\epsilon(\hat{y})-1}\phi(\hat{y}_j))\cdot x=x+(\sum_{j=1}^{\epsilon(\hat{y})-1}\phi(\hat{y}_j))\cdot x.
\end{equation}
Next, $x\in \N\phi(\hat{G})$, hence its support consists of elements of finite orders. Therefore, by \eqref{barhat}, the class $\bar{\phi}(\hat{y})\cdot x(\C^{f}G)$ is induced
from the twisted group algebra $\C^fG$ by an element $x+(\sum_{j=1}^{\epsilon(\hat{y})-1}\phi(\hat{y}_j))\cdot x$, which consists at least $\epsilon(x)$-many group elements of finite order.
Since it represents the same grading class as \eqref{scgr}, Lemma \ref{hatvstilde} implies that $\epsilon(x)=1$, moreover, $x$ is an element of $G$.
The fact that the graded subalgebra $\C^{\hat{f}}\hat{G}$ is mapped to a class $x(\C^{f}G)$ with $\epsilon(x)=1$ says that
the restriction $\phi|_{\hat{G}}$ to the subgroup $\hat{G}<\hat{\Gamma}$ is an isomorphism and $\phi([\hat{f}])=[f]$. Thus,
both grading classes are induced from the same twisted group algebra grading class - one by $\hat{y}$ and
the other by $\tilde{x}=1+\sum_{i=1}^{m-1}x_i$.
Comparing dimensions of the inducing graded vector spaces we now get
$$\epsilon(\hat{y})=\epsilon(\tilde{x})=m.$$
Furthermore, Lemma \ref{hatvstilde} also implies that $(\bar{\phi}(\hat{y})\cdot x,\tilde{x})\in R_G.$ Since $x\in G$ we have
$$(\bar{\phi}(\hat{y}),\tilde{x})\in R_G.$$
That is, there exist $g_1,\cdots, g_{m-1}\in G$ such that
$$\sum_{j=1}^{m-1}\phi(\hat{y}_j)g_j=\sum_{i=1}^{m-1}x_i.$$
For every $1\leq j\leq m-1$ let $\hat{g}_j$ be a pre-image of $g_j$ in $\hat{G}$.
Then $$\{\hat{y}_j\hat{g}_j\}_{j=1}^{m-1}$$ is a set of pre-images of the set $\{x_i\}_{i=1}^{m-1}$ of generators of $\mathcal{F}_{m-1}$ whose cardinality is also $m-1$.
Clearly,
$$\langle\hat{y}_j\hat{g}_j\rangle_{j=1}^{m-1}\cong \mathcal{F}_{m-1}.$$
Furthermore, since $\Gamma$ is a free product of $G$ and a free group of rank $m-1$ generated by $\{x_i\}_{i=1}^{m-1}$,
then $$\hat{\Gamma}=\langle \hat{G},\hat{y}_1\hat{g_1},\cdots,\hat{y}_{m-1}\hat{g}_{m-1}\rangle\cong \mathcal{F}_{m-1}*G.$$
It is then easy to deduce that $\phi$ is a group isomorphism and so any quotient morphism to \eqref{scgr} is actually a graded equivalence. In other words, \eqref{scgr} is maximal connected.
\end{proof}
The grading classes of the form \eqref{scgr} are the only maximal graded-simple classes of a finite-dimensional complex semisimple algebra. This is a consequence of the following claim,
whose proof is given in the more general setup of Lemma \ref{scgr2} later on.
\begin{lemma}\label{scgr1}(see \cite[Corollary 2.34]{EK13})
Let $\mathcal{G}$ be a connected $\Gamma_1$-simple grading of a finite-dimensional complex semisimple algebra $A$, where $\Gamma_1$ is any group. Then, with the above notation
$\mathcal{G}$ is a quotient of a grading in some class \eqref{scgr}.
\end{lemma}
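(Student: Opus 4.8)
The plan is to start from the structure theorem for graded-simple algebras and then exhibit an explicit group epimorphism realizing $\mathcal{G}$ as a quotient of a grading of the form \eqref{scgr}. First I would invoke Theorem \ref{th:BSZ}: since $\mathcal{G}$ is $\Gamma_1$-simple, it is graded-isomorphic to $x(\C^fG)$ for some subgroup $G<\Gamma_1$, some $f\in Z^2(G,\C^*)$, and some $x\in\N\Gamma_1$ with $|G|\cdot\epsilon(x)^2=\dim_{\C}(A)$. Set $m:=\epsilon(x)$, which matches the $m$ appearing in \eqref{scgr}. Using the normalization of Remark \ref{1inx}, I may assume $x=1+\sum_{i=1}^{m-1}\gamma_i$ for suitable (not necessarily distinct) elements $\gamma_1,\dots,\gamma_{m-1}\in\Gamma_1$.

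Next I would build the comparison homomorphism. With $\Gamma=\mathcal{F}_{m-1}*G$ as in \eqref{F*G} and free generators $x_1,\dots,x_{m-1}$, the universal property of the free product yields a group homomorphism $\phi:\Gamma\to\Gamma_1$ defined by $\phi|_G=$ the inclusion $G\hookrightarrow\Gamma_1$ and $\phi(x_i)=\gamma_i$. Crucially $\bar\phi(\tilde x)=x$ in $\N\Gamma_1$, where $\tilde x=1+\sum_{i=1}^{m-1}x_i$ and $\bar\phi$ is the semiring extension \eqref{extended}. I would then check that $\phi$ is surjective: by the support formula \eqref{eq:suppinduce} every element of $\mathrm{supp}_{\Gamma_1}(A)=\mathrm{supp}_{\Gamma_1}(x(\C^fG))$ has the shape $g_1\,g\,g_3^{-1}$ with $g\in G$ and $g_1,g_3\in\{1,\gamma_1,\dots,\gamma_{m-1}\}$, hence lies in $\langle G,\gamma_1,\dots,\gamma_{m-1}\rangle=\mathrm{Im}(\phi)$; connectivity of $\mathcal{G}$ forces $\Gamma_1=\langle\mathrm{supp}_{\Gamma_1}(A)\rangle\leq\mathrm{Im}(\phi)$, so $\phi$ is onto.

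To finish, I would feed $\phi$ into the fact that graded morphisms respect induction. Applying Lemma \ref{lemma:quoinduce} with the normal subgroup $N=\ker\phi\lhd\Gamma$, the $\Gamma/N$-quotient grading of the induced algebra $\tilde x(\C^fG)$ is graded isomorphic to $\overline{\tilde x}(B)$, where $\overline{\tilde x}$ is the $R_N$-class of $\tilde x$ and $B$ is $\C^fG$ carried to its $\Gamma/N$-grading. Identifying $\Gamma/N\cong\Gamma_1$ via $\bar\phi$, the element $\overline{\tilde x}$ becomes $x$, and because $\phi|_G$ is injective the subgroup $G$ and the cocycle $f$ survive unchanged, so $B\cong\C^fG$ as a $\Gamma_1$-graded algebra. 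Therefore the quotient grading is precisely $x(\C^fG)\cong\mathcal{G}$, which exhibits $\mathcal{G}$ as the quotient of the class \eqref{scgr} under the quotient morphism $(\mathrm{Id},\phi)$ of Definition \ref{def:quoalg}.

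The main obstacle is the bookkeeping in this last step: one must verify that pushing the $G$-grading of $\C^fG$ forward along $\phi$ leaves both the grading subgroup $G$ and its cohomology class $[f]$ intact, which hinges on $\phi|_G$ being the injective inclusion, and that the semiring image $\bar\phi(\tilde x)$ sits in the correct $R_N$-coset so that Lemma \ref{lemma:quoinduce} returns $x(\C^fG)$ on the nose rather than a merely $R_{\ker\phi}$-equivalent element. Everything else — the reduction via Theorem \ref{th:BSZ}, the normalization, and the surjectivity count — is routine once the support formula \eqref{eq:suppinduce} is in hand.
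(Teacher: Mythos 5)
Your argument is correct and is essentially the paper's own: the paper defers the proof to the more general Lemma \ref{scgr2}, whose $r=1$ case is exactly your construction --- apply Theorem \ref{th:BSZ}, normalize $x$ via Remark \ref{1inx}, and map $\mathcal{F}_{m-1}*G$ onto $\Gamma_1$ by sending the free generators to the support elements of $x$ and $G$ in by inclusion. The only difference is that you spell out the surjectivity check (via the support formula \eqref{eq:suppinduce} and connectivity) and the final bookkeeping with Lemma \ref{lemma:quoinduce}, which the paper leaves implicit.
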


At this point, the one-to-one correspondence in Theorem \ref{A} is straightforward:\\
{\it Proof of Theorem \ref{A}.}
Let $(G,\gamma)\in \mathcal{P}_n$, where $G$ is a group of central type of order $d^2$ (dividing $n^2$) and $\gamma$
is an Aut$(G)$-orbit of non-degenerate classes. This pair corresponds to the $\mathcal{F}_{\frac{n}{d}-1}*G$-grading class \eqref{scgr} of $M_n(\C)$
with $m=\frac{n}{d}$ and $[f]\in H^2(G,\C^*)$ in the orbit $\gamma$.
Firstly, it is not hard to deduce from Theorem \ref{th:AHequi} that distinct pairs in $\mathcal{P}_n$ correspond to distinct grading classes.
Secondly, Lemma \ref{xcfg} says that the grading classes \eqref{scgr} are indeed maximal. Finally, by Lemma \ref{scgr1}
any connected grading class of $M_n(\C)$ is a quotient of at least one grading class that corresponds to a pair in $\mathcal{P}_n$.
\qed

The following two observations are direct consequences of the definition of elementary gradings and the above description of
the correspondence in Theorem \ref{A}.
\begin{corollary}\label{th:elementary}(see \cite[Proposition 4.11, Proposition 4.14]{cibils2010})
Let $\{x_1,\cdots,x_{{n}-1}\}$ be any set of generators of the free group $\mathcal{F}_{n-1}$,
and let $\tilde{x}=1+\sum_{i=1}^{n-1}x_i\in \N\mathcal{F}_{n-1}$. Then
\begin{enumerate}
\item
The grading class $\tilde{x}(\C)=\tilde{x}(\C\{e\})$ is the unique maximal connected grading class of $M_n(\mathbb{C})$ which is elementary.
\item
Any connected elementary grading class of $M_n(\mathbb{C})$ is a quotient of the elementary grading class $\tilde{x}(\C)$.
\end{enumerate}
\end{corollary}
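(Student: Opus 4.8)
The plan is to read both statements off the induced-grading machinery of \S\ref{inductionsec} and \S\ref{gsa}, using crucially that $M_n(\C)$ is simple, so that every grading of it is automatically simple and Theorem \ref{th:BSZ} applies. First I would verify that the displayed class is elementary: since $\epsilon(\tilde x)=1+(n-1)=n$, Corollary \ref{elem} identifies $\tilde x(\C)$ as an elementary grading of $M_n(\C)$, namely the one induced from the trivial grading on $\C$ by $\tilde x$. Its maximality is exactly the content of Lemma \ref{xcfg} applied to the trivial group $G=\{e\}$ with the (non-degenerate) trivial cocycle and $m=n$, so that $\tilde x(\C)$ is the grading class \eqref{scgr} in this degenerate case. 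This furnishes a maximal connected elementary grading class and settles the existence half of (1).

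For part (2), let $\mathcal{G}$ be an arbitrary connected elementary $G$-grading of $M_n(\C)$. By Corollary \ref{elem} it is graded isomorphic to $x(\C)$ for some $x\in\N G$ with $\epsilon(x)=n$, and by Remark \ref{1inx} I may normalize $x=1+\sum_{i=1}^{n-1}g_i$ with $g_i\in G$ (listed with multiplicity). Connectivity means the support of $x$ generates $G$, hence the $g_i$ generate $G$, so the assignment $x_i\mapsto g_i$ defines a surjection $\phi:\mathcal{F}_{n-1}\to G$. Setting $N=\ker\phi$ and identifying $\mathcal{F}_{n-1}/N\cong G$, Lemma \ref{lemma:quoinduce} shows that the induced $G$-quotient grading of $\tilde x(\C)$ is graded isomorphic to $\bar\phi(\tilde x)(\C)$, and $\bar\phi(\tilde x)=1+\sum_i\phi(x_i)=x$. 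Thus the quotient grading recovers $x(\C)=\mathcal{G}$, exhibiting $\mathcal{G}$ as a quotient of $\tilde x(\C)$.

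Finally, I would close the uniqueness half of (1) using part (2): if $\mathcal{G}$ is any maximal connected elementary grading class, then part (2) places it below $\tilde x(\C)$ in the quotient partial order (Proposition \ref{poset}), and maximality of $\mathcal{G}$ forces $\mathcal{G}=\tilde x(\C)$. Equivalently, under the correspondence of Theorem \ref{A} the elementary maximal classes are precisely those attached to a trivial twisted group algebra $\C^fG=\C$, i.e.\ $G=\{e\}$, of which there is exactly one; this alternative uses the remark following Theorem \ref{th:AHequi} that an elementary grading can be graded-equivalent only to an elementary one, so no $(G,\gamma)$ with $G\neq\{e\}$ can contribute an elementary class.

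The only genuinely delicate point is the normalization in part (2): one must ensure that the identity element appears in the support of the inducing element $x$, so that the free generators of $\mathcal{F}_{n-1}$ can be matched with the remaining $n-1$ summands and $\phi$ sends $\tilde x$ to $x$. Remark \ref{1inx} is exactly what licenses this normalization, and without it the surjection $\phi$ would fail to carry $\tilde x$ to $x$; everything else is a direct bookkeeping of $\epsilon$-values and of Lemma \ref{lemma:quoinduce}.
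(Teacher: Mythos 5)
Your proof is correct and follows essentially the route the paper intends: the paper states this corollary as a direct consequence of the definition of elementary gradings (Corollary \ref{elem}), the correspondence of Theorem \ref{A} (i.e.\ Lemma \ref{xcfg} specialized to $G=\{e\}$), and the quotient mechanism of Lemma \ref{lemma:quoinduce}, which is exactly what you spell out. Your identification of the normalization via Remark \ref{1inx} as the one delicate step, needed so that the surjection $\mathcal{F}_{n-1}\to G$ carries $\tilde{x}$ to $x$, is exactly right.
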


\subsection{Proof of Theorem C}\label{proofD}
We extend the graded-simple case to general connected group gradings of finite-dimensional semisimple complex algebras.

For that we go back to the free product grading \eqref{coprodgrade}, focusing on
the case where the gradings $\mathcal{G}_j$ of $A_j$ are simple. For every $1\leq j\leq r$, let $G_j$ be a finite group and let
$$f_j\in Z^2(G_j,\C^*)$$ be any 2-cocycle. Let
$$\Gamma_j=\mathcal{F}_{m_j-1}*G_j$$
be the free product of $G_j$
and the free group $$\mathcal{F}_{m_j-1}=\langle x^{(j)}_{1},\cdots,x^{(j)}_{{m_j}-1}\rangle$$ of rank $m_j-1$.
For every $1\leq j\leq r$, let
$$\tilde{x}^{(j)}=1+\sum_{i=1}^{m_j-1}x^{(j)}_i\in\N \Gamma_j.$$
Lemma \ref{xcfg} tells us that for every $1\leq j\leq r$, the connected simple grading classes
$\tilde{x}^{(j)}(\C^{f_j}G_j)$ of the semisimple algebras $$A_j=\bigoplus_{[W]\in\text{Irr}(G_j,f_j)}M_{n(W)}(\C),\ \ n(W)=m_j\cdot\dim_{\C}(W)$$
are maximal.
These grading classes give rise to a connected grading class
\begin{equation}\label{coprod}
\coprod_{j=1}^r\tilde{x}^{(j)}(\C^{f_j}G_j)
\end{equation}
of the semisimple algebra $A=\bigoplus_{j=1}^rA_j$
by the free product $\coprod_{j=1}^r\Gamma_j$ as described in \eqref{coprodgrade}.
\begin{lemma}\label{scgr2}
Any connected grading class of a finite-dimensional complex semisimple algebra $A$ is a quotient of a grading class of the form \eqref{coprod}.
\end{lemma}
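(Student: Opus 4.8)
The plan is to reduce to the graded-simple case via the decomposition theorem and then reassemble the summands through a free product. Since the proof of Lemma \ref{scgr1} is to be subsumed here, I will not invoke it; instead the simple case will emerge as the $r=1$ instance of the argument. First I would decompose the connected $\Gamma_1$-grading $\mathcal{G}$ of $A$ into graded-simple summands: by Theorem \ref{prop:decsemsi} we may write $A=\bigoplus_{i=1}^r A_i$, where each $A_i$ is a $\Gamma_1$-graded ideal and graded-simple. As explained in \S\ref{cps}, each $A_i$ is connectedly graded by the subgroup $H_i:=\langle g\in\Gamma_1\mid (A_i)_g\neq 0\rangle<\Gamma_1$, and by connectivity of $\mathcal{G}$ the subgroups $H_1,\dots,H_r$ jointly generate $\Gamma_1$.

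Next I would treat each summand. By Theorem \ref{th:BSZ} the $H_i$-simple grading of $A_i$ is graded-isomorphic to $\hat{x}^{(i)}(\C^{f_i}G_i)$ for a subgroup $G_i<H_i$, a cocycle $f_i\in Z^2(G_i,\C^*)$ and $\hat{x}^{(i)}\in\N H_i$; after the normalization of Remark \ref{1inx} I may assume $\hat{x}^{(i)}=1+\sum_{l=1}^{m_i-1}y^{(i)}_l$ with $m_i=\epsilon(\hat{x}^{(i)})$ and $y^{(i)}_l\in H_i$. Now form $\Gamma_i=\mathcal{F}_{m_i-1}*G_i$ with free generators $x^{(i)}_1,\dots,x^{(i)}_{m_i-1}$ and the maximal class $\tilde{x}^{(i)}(\C^{f_i}G_i)$ of \eqref{scgr}. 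The universal property of the free product furnishes a group homomorphism $\phi_i:\Gamma_i\to H_i$ that is the inclusion on $G_i$ and sends $x^{(i)}_l\mapsto y^{(i)}_l$, so that $\bar{\phi}_i(\tilde{x}^{(i)})=\hat{x}^{(i)}$. Since graded morphisms respect induction (Lemma \ref{lemma:quoinduce}, applied to $\ker\phi_i$), the pair $(\psi_i,\phi_i)$ is then a quotient morphism from $\tilde{x}^{(i)}(\C^{f_i}G_i)$ onto the $H_i$-grading of $A_i$; here $\phi_i$ is onto because, by \eqref{eq:suppinduce}, the support of $\hat{x}^{(i)}(\C^{f_i}G_i)$ generates $\langle G_i,y^{(i)}_1,\dots,y^{(i)}_{m_i-1}\rangle=H_i$. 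Specialized to $r=1$, this step is exactly the content of Lemma \ref{scgr1}.

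Finally I would glue. Equip $A=\bigoplus_{i=1}^r A_i$ with the free product grading \eqref{coprod} by $\Gamma=\coprod_{i=1}^r\Gamma_i$, which is connected by Lemma \ref{directsum}. The homomorphisms $\iota_i\circ\phi_i:\Gamma_i\to\Gamma_1$ (with $\iota_i:H_i\hookrightarrow\Gamma_1$) assemble, again by the universal property of the free product, into a single homomorphism $\phi:\Gamma\to\Gamma_1$ whose image is $\langle H_1,\dots,H_r\rangle=\Gamma_1$; hence $\phi$ is onto. Taking $\psi:A\to A$ to be the algebra isomorphism induced by the maps $\psi_i$ on the respective summands, I would verify that $(\psi,\phi)$ is a graded morphism directly from the description \eqref{coprodgrade}: for $g\in\Gamma_i\setminus\{e\}$ the component $A_g=(A_i)_g$ maps into $A_{\phi(g)}$ by the corresponding property of $(\psi_i,\phi_i)$, while the identity component $A_e=\bigoplus_i(A_i)_{e_i}$ maps into the $\Gamma_1$-identity component since each $\phi_i$ carries the identity of $\Gamma_i$ to $e\in\Gamma_1$. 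As $\psi$ is an algebra isomorphism and $\phi$ is onto, $(\psi,\phi)$ is a quotient morphism, exhibiting $\mathcal{G}$ as a quotient of \eqref{coprod}.

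The main obstacle I expect is bookkeeping rather than conceptual: I must ensure that the component-wise quotient morphisms from the simple summands combine into a genuine quotient morphism of the whole algebra — in particular that the single map $\phi$ out of the free product is well defined and surjective, and that the graded-morphism condition is not violated on the overlapping identity component $\bigoplus_i(A_i)_{e_i}$. Both points are dispatched cleanly by the universal property of the free product together with the connectivity of $\mathcal{G}$, so no difficulty arises beyond the simple case settled in the middle step.
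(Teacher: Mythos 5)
Your argument is correct and follows essentially the same route as the paper's proof: decompose via Theorem \ref{prop:decsemsi} into graded-simple summands, apply Theorem \ref{th:BSZ} with the normalization of Remark \ref{1inx} to each, and map the free product $\coprod_{j}\mathcal{F}_{m_j-1}*G_j$ onto the original grading group by sending free generators to the normalized support elements. The only differences are cosmetic (you keep $G_i$ as an actual subgroup rather than an abstract copy, and you spell out the surjectivity and the check on the identity component, which the paper leaves implicit).
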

\begin{proof}
Let $[\mathcal{H}]$ be a connected grading class of $A$ by a group $\Gamma$.
By Theorem \ref{prop:decsemsi}, this grading class admits a decomposition $A=\bigoplus_{j=1}^rA_j$ to graded-simple ideals $A_j$, each of which endowed
with a connected simple grading class (by an appropriate subgroup) $[\mathcal{H}]_j$.
Theorem \ref{th:BSZ} now says that for every $1\leq j\leq r$ there exist $y_j\in \N \Gamma$ and finite groups $H_j<\Gamma$ such that $[\mathcal{H}]_j$ is the grading class
$y_j(\C^{\tilde{f}_j}H_j)$ for some $\tilde{f}_j\in Z^2(H_j,\C^*)$. Let $m_j=\epsilon(y_j)$.
Using once again the normalization as described in Remark \ref{1inx}, we can assume that $y_j=1+z_j$ for some $z_j=\sum_{i=1}^{m_j-1}g^{(j)}_i\in \N\Gamma$.
For every $1\leq j\leq r$ let $G_j$ be finite groups isomorphic to $H_j$ via $\phi_j: G_j\to H_j$.
Then the result follows by applying the surjective group homomorphism
$$\coprod_{j=1}^r\mathcal{F}_{m_j-1}*G_j\to \Gamma,$$
which extends the isomorphisms $\phi_j: G_j\to H_j$ by choosing generators $x^{(j)}_1,\cdots,x^{(j)}_{{m}-1}$ of $\mathcal{F}_{m_j-1}$
for every $1\leq j\leq l$ and sending them to $g^{(j)}_1,\cdots,g^{(j)}_{{m}-1}$ respectively.
\end{proof}
By Lemma \ref{scgr2}, the maximal connected grading classes of the semisimple algebras are of the form \eqref{coprod}.
The following example shows that not every such grading class is maximal.
\begin{example}\label{diagquot}
The ordinary group algebra $\C G$ of an abelian group $G$ of order $n>1$ is isomorphic to the commutative algebra $A=\oplus_{j=1}^n\C$.
Let $1(\C\{e\})$ be the induction of the trivial group algebra by a 1-dimensional space.
Then with the above notation, the class $\coprod_{j=1}^n1(\C\{e\})$ is grading $A$ by the trivial group $\coprod_{j=1}^n\{e\}=\{e\}$. This class is of the form \eqref{coprod},
but is a quotient of the $G$-grading class $1(\C G)$ under the forgetful morphism (see \S\ref{qg}).
More generally, if a grading class \eqref{coprod} admits $n$ factors of the form $1(\C\{e\})$, then
this grading class can be written as $$\coprod_{j=1}^{n}1(\C\{e\})*\coprod_{j=n+1}^r\tilde{x}^{(j)}(\C^{f_j}G_j).$$
This class is a quotient grading of
$$\C G*\coprod_{j=n+1}^r\tilde{x}^{(j)}(\C^{f_j}G_j),$$
for some abelian group $G$ of order $n$. Clearly, if $n>1$, this quotient is proper and thus the grading class \eqref{coprod} is not maximal.
\end{example}
In order to grasp the maximal grading classes of our algebras, consider quotients of the maximal graded-simple classes $\tilde{x}(\C^fG)$.
By Theorem \ref{prop:decsemsi} and Theorem \ref{th:BSZ}, such quotients are represented by
\begin{equation}
\bigoplus_{j=1}^ry_j(\C^{\hat{f}_j}H_j).
\end{equation}
In the following lemma we assume that these quotients are of the form \eqref{coprod}.

\begin{lemma}\label{simpletoprod}
Let
\begin{equation}\label{quotientequation}
[(\psi,\phi)]:\tilde{x}(\C^fG)\to\coprod_{j=1}^r\tilde{x}^{(j)}(\C^{f_j}G_j)
\end{equation}
be a quotient morphism class between a simple connected $\mathcal{F}_{m-1}*G$-grading class of the form \eqref{scgr} and a connected
$\coprod_{j=1}^r\mathcal{F}_{m_j-1}*G_j$-grading class of the form \eqref{coprod}.
If $r>1$, then $\tilde{x}(\C^fG)$ is equivalent to a twisted group algebra $\C^fG$, where $G$ is abelian and $[f]\in H^2(G,\C^*)$ is trivial.
Moreover, $[(\psi,\phi)]$ is the forgetful morphism class.
\end{lemma}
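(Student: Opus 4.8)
The plan is to exploit the rigidity of the free--product grading on the target. By \eqref{coprodgrade} every nonzero homogeneous component of $B=\coprod_{j=1}^r\tilde x^{(j)}(\C^{f_j}G_j)$ sits in one of the free factors $\Gamma_j=\mathcal F_{m_j-1}*G_j$, so $\mathrm{supp}_{\hat\Gamma}(B)\subseteq\bigcup_j\Gamma_j$. First I would record two structural facts. Since $\psi$ is an algebra isomorphism with $\psi(A_\gamma)\subseteq B_{\phi(\gamma)}$, a dimension count forces $B_h=\bigoplus_{\phi(\gamma)=h}\psi(A_\gamma)$; thus $B$ is exactly the $\phi$--coarsening of the grading of $A=\tilde x(\C^fG)$, one has $\mathrm{supp}_{\hat\Gamma}(B)=\phi(\mathrm{supp}_\Gamma(A))$, and $A^{0}:=\psi^{-1}(B_e)=\bigoplus_{\gamma\in\ker\phi}A_\gamma$. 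Because each identity component $(A_j)_e$ is the diagonal $\cong\C^{m_j}$, the component $B_e=\bigoplus_j(A_j)_e$ is commutative, and hence so is the subalgebra $A^{0}$.

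The key step is to show that $\mathrm{Im}(\phi)=\hat\Gamma$ lies inside a single free factor $\Gamma_{j_0}$. Using \eqref{eq:suppinduce}, $\mathrm{supp}_\Gamma(A)$ contains $G$, the generators $x_i$, and the products $x_ig$ and $x_ix_j^{-1}$. Since $\phi(G)$ is finite and its nonidentity elements lie in $\bigcup_l\Gamma_l$, they must all lie in one factor $\Gamma_{j_0}$ — otherwise a product of two of them from different factors would be a reduced word outside $\bigcup_l\Gamma_l$. If $\phi(G)\neq\{e\}$, applying the same principle to $\phi(x_ig)=\phi(x_i)\phi(g)$ forces every $\phi(x_i)\in\Gamma_{j_0}$; if $\phi(G)=\{e\}$, applying it to $\phi(x_ix_j^{-1})$ forces all nontrivial $\phi(x_i)$ into a common factor. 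Either way $\hat\Gamma=\mathrm{Im}(\phi)$ sits in one factor $\Gamma_{j_0}$, and as distinct free factors intersect trivially, $\Gamma_j=\{e\}$ for every $j\neq j_0$.

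Now I would finish by bookkeeping. Since $r>1$, at least one factor is trivial, so some summand $A_{j_1}=\tilde x^{(j_1)}(\C\{e\})=\C$ is a block of size $1$ of $B\cong A=\bigoplus_{[W]}M_{m\cdot\dim_\C W}(\C)$ (Lemma \ref{indform}); hence $m=1$. Then $\Gamma=G$, and $\hat\Gamma=\phi(G)=\Gamma_{j_0}$ is a finite free product, so $m_{j_0}=1$ and $\Gamma_{j_0}=G_{j_0}=\phi(G)$, giving $A_{j_0}=\C^{f_{j_0}}G_{j_0}$. Comparing dimensions, with $\dim_\C B_e=r=|\ker(\phi|_G)|$ and hence $|G_{j_0}|=|G|/r$, the equality $\dim_\C A=\dim_\C B$ reads $|G|=|G|/r+(r-1)$, that is $(r-1)|G|=r(r-1)$; as $r>1$ this yields $|G|=r$ and $|G_{j_0}|=1$. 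Thus $\phi(G)=\{e\}$, $\hat\Gamma$ is trivial, and $(\psi,\phi)$ is the forgetful morphism onto $B=\C^{|G|}$ graded by the trivial group. Finally $A^{0}=A=\C^fG$ is commutative, which forces $G$ to be abelian and $\alpha_f$ (equivalently $[f]$) to be trivial, as required.

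The hard part will be the middle paragraph: translating the purely combinatorial ``no mixing between free factors'' property of \eqref{coprodgrade} into the collapse of $\mathrm{Im}(\phi)$ into a single factor. Everything afterward is a count of Artin--Wedderburn block sizes together with the dimension identity, and the commutativity of $A^{0}$ is only invoked at the very end, once $\ker(\phi|_G)=G$ is known. An alternative to the support argument would be to observe, via the fact that graded morphisms respect induction, that $B$ is graded--equivalent to $\bar\phi(\tilde x)(\psi(\C^fG))$ and therefore $\psi(\C^fG)$ fails to be graded--simple; but I expect the direct support analysis to be cleaner and to avoid analysing the crossed--product structure of the coarsened twisted group algebra.
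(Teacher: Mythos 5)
Your proof is correct, and it reaches the two key intermediate facts of the paper's argument --- that $\epsilon(\tilde{x})=1$ and that all but one of the free factors $\mathcal{F}_{m_j-1}*G_j$ are trivial --- by a genuinely different route. The paper gets there through the induction semiring: it applies Lemma \ref{lemma:quoinduce} to write the target as $\bar{\phi}(\tilde{x})\cdot\coprod_j y^{(j)}(\C^{f_j}G_j)$, normalizes via Remark \ref{1inx}, and invokes Lemma \ref{hatvstilde} (hence Theorem \ref{th:AHequi}) for each summand to force $\bar{\phi}(\tilde{x})$ into $\bigcap_j\N[\mathcal{F}_{m_j-1}*G_j]=\N$. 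You instead work directly with supports: since $\mathrm{supp}(B)=\phi(\mathrm{supp}(A))$ must lie in $\bigcup_j\Gamma_j$ by \eqref{coprodgrade}, a reduced-word argument in the free product (applied to $\phi(g_1g_2)$, $\phi(x_ig)$ and $\phi(x_ix_j^{-1})$, all of which lie in $\mathrm{supp}(A)$ by \eqref{eq:suppinduce}) collapses $\mathrm{Im}(\phi)$ into a single factor, and you then extract $m=1$ from the presence of a $1\times 1$ Artin--Wedderburn block via Lemma \ref{indform} rather than from the semiring identity. This is more elementary --- it avoids Theorem \ref{th:AHequi} entirely for this step and replaces it with normal-form combinatorics in free products --- at the cost of a slightly longer case analysis ($\phi(G)$ trivial or not); the paper's version is shorter because the heavy lifting is already packaged in Lemma \ref{hatvstilde}. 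From the point where only one factor can be nontrivial onward, your dimension bookkeeping ($|G|=|G|/r+(r-1)$, hence $|G|=r$ and $|G_{j_0}|=1$) and the final commutativity argument coincide with the paper's, which counts $|N|=r$, $|G|=r-1+|G_r|$ and $G_r\cong G/N$ to the same effect.
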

\begin{proof}
By Lemma \ref{lemma:quoinduce},
\begin{equation}\label{copquot}
\coprod_{j=1}^r\tilde{x}^{(j)}(\C^{f_j}G_j)=\bar{\phi}(\tilde{x})\cdot\coprod_{j=1}^ry^{(j)}(\C^{f_j}G_j),
\end{equation}
where $\bar{\phi}$ is the extension (see \eqref{extended}) of a group morphism representative $\phi$ and
$\coprod_{j=1}^ry^{(j)}(\C^{f_j}G_j)$ is the corresponding quotient grading class of the graded subalgebra $\C^fG$.
For every $j=1,\cdots, r$ we denote $\epsilon_j=\epsilon(y^{(j)})$ and normalize $$y^{(j)}=1+\sum_{i=1}^{\epsilon_j-1}y^{(j)}_i$$ according to Remark \ref{1inx}.
Then by \eqref{copquot}, for every $j$, the grading class $\tilde{x}^{(j)}(\C^{f_j}G_j)$ is the same as the class
$$\bar{\phi}(\tilde{x})\cdot y^{(j)}(\C^{f_j}G_j)=[\bar{\phi}(\tilde{x})+\sum_{i=1}^{\epsilon_j-1}y^{(j)}_i\cdot\bar{\phi}(\tilde{x})](\C^{f_j}G_j).$$
By Lemma \ref{hatvstilde}, for every $j=1,\cdots, r$
$$(\tilde{x}^{(j)},\bar{\phi}(\tilde{x})+\sum_{i=1}^{\epsilon_j-1}y^{(j)}_i\cdot\bar{\phi}(\tilde{x}))\in R_{G_j}.$$
So, for every $j=1,\cdots, r$ there exist partial sums $\tilde{z}^{(j)}_1,\tilde{z}^{(j)}_2\in \N[\mathcal{F}_{m_j-1}*G_j]$
such that $\tilde{z}^{(j)}_1+\tilde{z}^{(j)}_2=\tilde{x}^{(j)}$ and
\begin{equation}\label{ztilde}
(\tilde{z}^{(j)}_1,\bar{\phi}(\tilde{x}))\in R_{G_j}.
\end{equation}
From \eqref{ztilde} we get
\begin{equation}\label{cap}
\bar{\phi}(\tilde{x})\in \bigcap_{j=1}^r\N[\mathcal{F}_{m_j-1}*G_j]\subset\N[\coprod_{j=1}^r\mathcal{F}_{m_j-1}*G_j].
\end{equation}
Since the intersection $$\bigcap_{j=1}^r\mathcal{F}_{m_j-1}*G_j<\coprod_{j=1}^r\mathcal{F}_{m_j-1}*G_j$$ is trivial, then \eqref{cap} can hold only if $\bar{\phi}(\tilde{x})$ is in $\N$.
In other words $\epsilon(\bar{\phi}(\tilde{x}))=1,$ yielding
\begin{equation}\label{epsilontilde}
\epsilon(\tilde{x})=1.\end{equation}
Condition \eqref{epsilontilde} says that $\tilde{x}(\C^fG)$ is graded by the group $G$ which is finite.
Thus, \eqref{quotientequation} is a quotient morphism of algebras graded by finite groups.
Now, a free product of groups can be finite only if all, perhaps except one, of them are trivial. Suppose then that $G_1,\cdots,G_{r-1}$ are trivial and so the quotient is graded by
$G_r$, which might as well be trivial.
Comparing dimensions we get
\begin{equation}\label{GrGr}
|G|=r-1+|G_r|.
\end{equation}
Let $N\lhd G$ be the kernel of the group morphism $\phi$. Then on one hand, the order of $N$ is equal to the dimension of the trivial component of the quotient $G/N$-grading, that is
\begin{equation}\label{Nr}
|N|=r.
\end{equation}
On the other hand, by connectivity,
\begin{equation}\label{GrG/N}
G_r\cong G/N.
\end{equation}
Combining \eqref{GrGr}, \eqref{Nr} and \eqref{GrG/N}, we get
$$r-1+|G_r|=|G|=|N|\cdot|G/N|=r\cdot|G_r|.$$
Hence, if $r>1$ then $|G_r|=1$, and so the quotient is graded by the trivial group. This means that any representative $(\psi,\phi)$ is the forgetful morphism of the twisted group algebra $\C^fG$.
Moreover, since the simply-graded components of the quotient are all 1-dimensional, the underlying algebra is commutative, saying that $G$ is abelian and,
by Lemma \ref{lemma:cohdiv}, $f$ is cohomologically trivial.
\end{proof}

Lemma \ref{simpletoprod} is complementary to Lemma \ref{xcfg}. These two claims say a quotient morphism \eqref{quotientequation}
is either a graded-equivalence for $r=1$, or a forgetful morphism of an abelian group algebra for $r>1$. In order to move to the general case, that is a quotient morphism between
two grading classes of the form \eqref{coprod}, we say that a factor $\tilde{x}^{(j)}(\C^{f_j}G_j)$ of \eqref{coprod} is {\it trivial} if it grades a trivial summand $\C$, that is
if $$\epsilon(\tilde{x}^{(j)})\cdot|G_j|=1.$$
\begin{corollary}\label{prod}
A grading class \eqref{coprod} is maximal if and only if it contains no more than one factor $\tilde{x}^{(j)}(\C^{f_j}G_j)$ which is trivial.
\end{corollary}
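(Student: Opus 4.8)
The plan is to prove the two implications separately, the forward (``only if'') direction being essentially contained in Example \ref{diagquot}. For its contrapositive, suppose the class $[\mathcal{C}]:=\coprod_{j=1}^r\tilde{x}^{(j)}(\C^{f_j}G_j)$ of \eqref{coprod} contains $n\geq 2$ trivial factors, indexed by a set $T$. Grouping them exactly as in Example \ref{diagquot}, the subclass $\coprod_{j\in T}1(\C\{e\})$ is the forgetful quotient of $1(\C G)$ for an abelian group $G$ of order $n$, and $1(\C G)$ is a legitimate factor $\tilde{x}(\C^fG)$ of the form \eqref{scgr} with $m=1$ and $f$ cohomologically trivial. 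Hence $[\mathcal{C}]$ is a quotient of the form-\eqref{coprod} class obtained by replacing those $n$ trivial factors with the single factor $1(\C G)$; since $G$ is nontrivial the two grading groups are non-isomorphic, so the quotient is proper and $[\mathcal{C}]$ is not maximal.

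For the ``if'' direction I would fix $[\mathcal{C}]$ with at most one trivial factor, grading $A=\bigoplus_{j=1}^rA_j$, and show it is maximal. Suppose $[\mathcal{C}]$ is a quotient of a connected class $[\mathcal{H}]$. By Lemma \ref{scgr2} there is a class $[\mathcal{K}]$ of the form \eqref{coprod} of which $[\mathcal{H}]$ is a quotient, so by transitivity and antisymmetry of the quotient order (Proposition \ref{poset}) it suffices to prove that every quotient morphism $(\psi,\phi)\colon[\mathcal{K}]\to[\mathcal{C}]$ between two form-\eqref{coprod} classes is a graded equivalence. Write $[\mathcal{K}]=\coprod_{i=1}^s\tilde{w}^{(i)}(\C^{h_i}H_i)$, grading $B=\bigoplus_{i=1}^sB_i$. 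Each free-product summand $B_i$ is a graded ideal of $[\mathcal{K}]$, and since $\psi$ is an algebra isomorphism intertwining the gradings via $\phi$, its image $\psi(B_i)$ is a graded ideal of the \emph{coarser} grading $[\mathcal{C}]$, hence a direct sum $\bigoplus_{j\in S_i}A_j$ of graded-simple summands; the resulting sets $S_i$ partition $\{1,\dots,r\}$. Restricting $(\psi,\phi)$ to $B_i$ then gives a quotient morphism from the simple class $\tilde{w}^{(i)}(\C^{h_i}H_i)$ onto the form-\eqref{coprod} class $\coprod_{j\in S_i}\tilde{x}^{(j)}(\C^{f_j}G_j)$.

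This is precisely the setting of Lemma \ref{simpletoprod}: whenever $|S_i|>1$ that lemma exhibits the morphism as a forgetful morphism onto the trivial group, which forces every factor with $j\in S_i$ to be trivial. As $[\mathcal{C}]$ has at most one trivial factor, no $S_i$ can have two elements, so every $S_i$ is a singleton and $s=r$; thus each restriction is a quotient morphism $\tilde{w}^{(i)}(\C^{h_i}H_i)\to\tilde{x}^{(\sigma(i))}(\C^{f_{\sigma(i)}}G_{\sigma(i)})$ between single simple classes, and since the target is maximal by Lemma \ref{xcfg} this restriction is a graded equivalence. In particular $\phi$ carries the free factor $\hat{\Gamma}_i$ isomorphically onto $\Gamma_{\sigma(i)}$ (connectivity guarantees the image is all of $\Gamma_{\sigma(i)}$), and a homomorphism of free products carrying each factor isomorphically onto a distinct factor is itself an isomorphism; hence $\phi$ is an isomorphism and $(\psi,\phi)$ is a graded equivalence. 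The delicate point, and the one place the hypothesis is genuinely used, is the variance of graded ideals under a quotient morphism: ideals of the finer grading $[\mathcal{K}]$ push forward to ideals of the coarser $[\mathcal{C}]$ but not conversely, so the summands $B_i$ may glue several $A_j$ together, and Lemma \ref{simpletoprod} shows such gluing can occur only among trivial one-dimensional summands.
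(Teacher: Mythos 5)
Your proposal is correct and follows essentially the same route as the paper: the ``only if'' direction is Example \ref{diagquot}, and the ``if'' direction reduces via Lemma \ref{scgr2} to a quotient morphism between two classes of the form \eqref{coprod}, partitions the target's graded-simple summands according to the images of the source's summands, invokes Lemma \ref{simpletoprod} to rule out any block of size greater than one (since that would force at least two trivial factors), and concludes from the maximality of each single factor (Lemma \ref{xcfg}) that the morphism is a graded equivalence. Your write-up is if anything slightly more careful than the paper's, making explicit both the pushforward of graded ideals and the final assembly of the factorwise equivalences into an isomorphism of the free products.
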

\begin{proof}
Example \ref{diagquot} shows that if the grading class \eqref{coprod} contains more than one trivial factor $\tilde{x}^{(j)}(\C^{f_j}G_j)=1(\C\{e\})$ (graded by the trivial group $\{e\}$),
then it is a nontrivial
quotient of a class which admits a graded-simple factor $1(\C G)$, the ordinary group algebra of a nontrivial abelian group $G$. It is thus not maximal, proving the ``only if" part.
Conversely, suppose that the grading class $\coprod_{j=1}^r\tilde{x}^{(j)}(\C^{f_j}G_j)$ admits no more than one trivial summand. Assume also that it is a quotient of another grading class.
By Lemma \ref{scgr2}, we may assume that it is a quotient of another grading class of the form \eqref{coprod}
\begin{equation}\label{another}
[(\hat{\psi},\hat{\phi})]:\coprod_{k=1}^s\hat{x}^{(k)}(\C^{\hat{f}_k}\hat{G}_k)\to\coprod_{j=1}^r\tilde{x}^{(j)}(\C^{f_j}G_j).
\end{equation}
Both gradings determine decompositions of the underlying semisimple algebras to their simply graded parts. Any algebra isomorphism representative
$\hat{\psi}$ yields a partition $$\{1,\cdots,r\}=\bigcup_{k=1}^s\psi'(k)$$
and partial quotient morphism classes
$$[(\hat{\psi}_k,\hat{\phi}_k)]:\hat{x}^{(k)}(\C^{\hat{f}_k}\hat{G}_k)\to\coprod_{j\in\psi'(k)}\tilde{x}^{(j)}(\C^{f_j}G_j)$$
for every $1\leq k\leq s$.
Fix $k\in\{1,\cdots,s\}$.
If $|\psi'(k)|>1$, then by Lemma \ref{simpletoprod} $[(\hat{\psi}_k,\hat{\phi}_k)]$ is the forgetful morphism class of a twisted group algebra $\C^{\hat{f}_k}\hat{G}_k$ for
a nontrivial abelian group $\hat{G}_k$ and $\hat{f}_k$ cohomologically trivial.
In particular, $\coprod_{j\in\psi'(k)}\tilde{x}^{(j)}(\C^{f_j}G_j)=\coprod_{j\in\psi'(k)} 1(\C\{e_j\})$ is a product of $(1<)|\psi'(k)|$-many trivial factors.
But this is impossible, because the hypothesis does not allow more than one trivial factor in the grading class $\coprod_{j\in\psi'(k)}\tilde{x}^{(j)}(\C^{f_j}G_j)$.
We deduce that $|\psi'(k)|=1$ and so by Lemma \ref{diagquot}, $(\hat{\psi}_k,\hat{\phi}_k)$ is a graded-equivalence for every $k=1,\cdots,s$.
We get that the entire quotient morphism class \eqref{another} is the identity. By the definition of the quotient-grading partial order, the grading class \eqref{coprod} is maximal.
\end{proof}
We are now ready to prove Theorem C.
As can be expected, the formal term $\gamma^m$ is assigned to the maximal connected simple grading class \eqref{scgr}, where $\gamma$ is the Aut$(G)$-orbit of $[f]\in H^2(G,\C^*)$.
This can naturally be extended to a one-to-one correspondence
$$\coprod_{j=1}^r(1+\sum_{i=1}^{m_j-1}x^{(j)}_i)(\C^{f_j}G_j)\longleftrightarrow\sum_{j=1}^r\gamma_j^{m_j}$$
between the connected grading classes \eqref{coprod} of a finite-dimensional semisimple algebra $A$, and the formal sums $\sum a{(\gamma,m)}\gamma^{m}$ such that
$$\chi_A(z)=\sum a{(\gamma,m)}m^{-z}\zeta_{\gamma}(z).$$
By Theorem \ref{th:AHequi} and Lemma \ref{decompequiv}, two grading classes of the form \eqref{coprod} are the same
if and only if they are mapped to the same formal sum.
Next, by Lemma \ref{scgr2}, any maximal connected grading class of $A$ is of the form \eqref{coprod}, while Corollary \ref{prod} tells us that
grading class are maximal if and only if they contain at most one non-trivial factor, which in turn says that the corresponding formal sum $\sum_{j=1}^r\gamma_j^{m_j}$ admits no more than
one non-trivial terms. This completes the proof.
\qed

\subsection{Diagonal algebras}\label{diagsec}
In his paper \cite{das2008}, S. D\u{a}sc\u{a}lescu studies the group gradings of the so called {\it diagonal algebra}, that is the direct sum of a certain field.
He tacitly classifies those of them which are maximal connected. Let us describe these maximal classes over the field of complex numbers using the above notation.
Obviously, an induction of any graded algebra by the endomorphisms of a graded space of dimension higher than $d$ admits graded ideals of dimension higher than $d^2$.
Hence, by Theorem \ref{th:BSZ} every graded-simple class of a diagonal algebra $\C^n$ is a twisted group algebra graded-class. Furthermore, if a twisted group algebra $\C^fG$ is commutative
then $G$ is obviously abelian and, by Lemma \ref{lemma:cohdiv}, $f$ is cohomologically trivial.
By the discussion in \S\ref{proofD} we obtain
\begin{theorem}\label{da}(see \cite[Theorem 5]{das2008})
With the above notation, the maximal connected grading classes of the diagonal algebra $\C^n$ are free products $\coprod_{j=1}^r\C G_j$ of ordinary group algebras,
where $G_j$ are abelian groups with $\sum_{j=1}^r|G_j|=n,$ such that no more than one group $G_j$ is trivial.
\end{theorem}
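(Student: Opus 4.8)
The plan is to deduce the statement from the general classification of maximal connected grading classes of finite-dimensional semisimple algebras established in \S\ref{proofD}, exploiting the single special feature of $\C^n$, namely that it is commutative. First I would invoke Theorem \ref{prop:decsemsi}: any connected grading of $\C^n$ decomposes as a direct sum of graded-simple ideals, and by Theorem \ref{th:BSZ} each such ideal carries a grading class of the form $x(\C^fG)$ for a finite group $G$, a cocycle $f\in Z^2(G,\C^*)$, and some $x\in\N\Gamma$.

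The main step is to pin down which of these building blocks can occur. Since $\C^n$ is commutative, every graded ideal, in particular every graded-simple summand $x(\C^fG)$, is commutative. If $\epsilon(x)=m>1$, then $x(\C^fG)\cong M_m(\C^fG)$ contains a copy of $M_m(\C)$, which is noncommutative; hence necessarily $\epsilon(x)=1$ and, after the normalization of Remark \ref{1inx}, the summand is a twisted group algebra $\C^fG$ itself. Commutativity of $\C^fG$ forces $u_gu_h=u_hu_g$ for all $g,h$, so $G$ is abelian; the associated form $\alpha_f$ is then trivial, and since for abelian groups $\alpha_f$ determines the class, Lemma \ref{lemma:cohdiv} gives that $[f]$ is cohomologically trivial. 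Thus every graded-simple summand of any grading of $\C^n$ is an ordinary group algebra $\C G$ of a finite abelian group $G$.

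It remains to assemble these pieces. By Lemma \ref{scgr2}, every connected grading class of $\C^n$ is a quotient of a free product $\coprod_{j=1}^r\tilde{x}^{(j)}(\C^{f_j}G_j)$; by the previous paragraph each factor may be taken to be $\C G_j$ with $G_j$ abelian, so that $m_j=\epsilon(\tilde{x}^{(j)})=1$ and $[f_j]$ is trivial. Comparing dimensions yields $\sum_{j=1}^r|G_j|=\dim_{\C}\C^n=n$. Finally, a trivial factor here is precisely a copy of $\C\{e\}\cong\C$, so Corollary \ref{prod} tells us that such a free product is maximal if and only if at most one of the groups $G_j$ is trivial. This produces exactly the asserted description.

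The step I expect to require the most care is the reduction in the second paragraph: verifying cleanly that commutativity of the ambient algebra forbids any genuine matrix induction ($\epsilon(x)>1$) as well as any nonabelian or nontrivially twisted group algebra, so that the graded-simple atoms collapse to ordinary abelian group algebras. Once this is secured, the passage to maximal classes is a direct application of Corollary \ref{prod}, with the dimension bookkeeping being routine.
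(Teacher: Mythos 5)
Your proposal is correct and follows essentially the same route as the paper: reduce via Theorems \ref{prop:decsemsi} and \ref{th:BSZ} to showing that every graded-simple summand of $\C^n$ is an ordinary abelian group algebra (the paper rules out $\epsilon(x)>1$ by a dimension count on graded ideals rather than by noncommutativity of $M_m(\C)$, and gets triviality of $[f]$ directly from Lemma \ref{lemma:cohdiv}, but these are the same observation), and then conclude by Lemma \ref{scgr2} and Corollary \ref{prod}. The only cosmetic remark is that your appeal to Lemma \ref{lemma:cohdiv} is redundant once you note that $\alpha_f$ determines $[f]$ for abelian groups; either justification suffices on its own.
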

Note that Theorem \ref{da} generalizes \cite[Proposition 6.1, Corollary 6.2]{cibils2010}.
In particular, it shows that for every $n\geq 3$ ($n$ can be square-free) there is no universal covering of $\C^n$.

\subsection{Bisimple algebras}\label{bisimpsec}
A finite-dimensional complex algebra is {\it bisimple} if it is of the form
\begin{equation}\label{bisimp}
A=M_{n_1}(\C)\oplus M_{n_2}(\C).
\end{equation}
Our investigation of maximal connected grading classes of a
bisimple algebra \eqref{bisimp} starts with those classes which
are graded-simple. Any graded-simple class of a bisimple algebra
is of the form $x(\C^fG)$, where the twisted group algebra $\C^fG$
is itself a bisimple algebra, that is $|$Irr$(G,f)|=2$. A group
$G$ satisfying this property is termed {\it of $2f$-central type}
in \cite{Higgs88}, or just {\it of double central type}. For
example, the cyclic group $C_2$ is $2f$-central simple for any
$f\in Z^2(C_2,\C^*)$ (they are all cohomologous). In the spirit of
Corollary \ref{regularity}, apply Theorem \ref{rayclasses}
to obtain the following alternative definition.
\begin{lemma}\label{biregularity}
A cocycle $f\in Z^2(G,\C^*)$ satisfies $|$Irr$(G,f)|=2$ if and only if there is a unique nontrivial $g\in G$, up to conjugation, such that res$|^G_{C_G(g)}\alpha_f(g,-)=1$.
\end{lemma}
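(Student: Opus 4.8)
The plan is to transcribe the proof of Corollary \ref{regularity} with the count $1$ replaced by $2$. The key is to read off $|\text{Irr}(G,f)|$ from the center of $\C^fG$. Indeed, in the Artin--Wedderburn decomposition \eqref{AW} each summand $\text{End}_\C(W)$ has a one-dimensional center, so $\dim_\C Z(\C^fG)$ equals the number of summands, that is $|\text{Irr}(G,f)|$. First I would invoke Theorem \ref{rayclasses}, which identifies $\dim_\C Z(\C^fG)$ with the number of $f$-regular conjugacy classes of $G$. Combining these two facts yields that $|\text{Irr}(G,f)|$ equals the number of $f$-regular conjugacy classes of $G$.

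Next I would observe that the identity $e$ is always $f$-regular: since $C_G(e)=G$ and $\alpha_f(e,h)=f(h,e)f(e,h)^{-1}=1$ for a normalized cocycle, the linear character \eqref{linchar} attached to $e$ is trivial. Hence the class $\{e\}$ is always one of the $f$-regular classes, and therefore $|\text{Irr}(G,f)|=2$ holds if and only if there is exactly one \emph{nontrivial} $f$-regular conjugacy class. Since $f$-regularity is a conjugation-invariant property (noted after Definition \ref{regularitydef}) and, by Definition \ref{regularitydef}, an element $g$ is $f$-regular precisely when $\text{res}|^G_{C_G(g)}\alpha_f(g,-)=1$, this is exactly the assertion that there is a unique nontrivial $g\in G$, up to conjugation, with $\text{res}|^G_{C_G(g)}\alpha_f(g,-)=1$. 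This gives both implications at once.

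I do not expect a serious obstacle here; the argument is a direct adaptation of the non-degenerate ($|\text{Irr}(G,f)|=1$) case already recorded in Corollary \ref{regularity}. The only point requiring minor care is the normalization of the cocycle used to check that $e$ is $f$-regular. This causes no loss of generality: every cocycle is cohomologous to a normalized one, and by property (3) in \S\ref{perct} the restriction of $\alpha_f$ to $\mathcal{A}_G$ depends only on the cohomology class, so the set of $f$-regular elements is unchanged under such a replacement.
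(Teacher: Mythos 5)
Your argument is correct and is exactly what the paper intends: the paper's own justification is simply to apply Theorem \ref{rayclasses} in the spirit of Corollary \ref{regularity}, i.e.\ to identify $|\mathrm{Irr}(G,f)|$ with the number of $f$-regular conjugacy classes via the center of $\C^fG$ and then separate off the always-regular trivial class. No gaps; the normalization remark about $e$ being $f$-regular is a harmless extra precaution.
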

Next, a class which is not graded-simple clearly decomposes to a direct sum of two graded-simple algebras, both are matrix algebras. Applying the discussion in \S\ref{proofD}
we get
\begin{theorem}\label{121bisimp}
There is a one-to-one correspondence between the maximal connected grading classes of a non-diagonal bisimple algebra \eqref{bisimp} and
\begin{enumerate}
\item  the pairs $(G,\gamma)$, where $\gamma$ are Aut$(G)$-orbits of
classes $[f]\in H^2(G,\mathbb{C}^*)$ for groups $G$ of $2f$-central type whose orders divide $n_1^2+n_2^2$, as well as
\item the quadruples of the form
$(G_1,\gamma_1,G_2,\gamma_2)$, where for $i=1,2$, $G_i$ are groups of central type and of orders dividing $n_i^2$, and $\gamma_i$ are Aut$(G_i)$-orbits of
non-degenerate cohomology classes in $H^2(G_i,\mathbb{C}^*)$. If additionally $n_1=n_2$, then the quadruples $(G_1,\gamma_1,G_2,\gamma_2)$ and $(G_2,\gamma_2,G_1,\gamma_1)$ are identified.
\end{enumerate}
\end{theorem}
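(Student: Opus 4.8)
The plan is to specialize the general classification of maximal connected grading classes of a finite-dimensional semisimple algebra to the bisimple case. By Lemma \ref{scgr2} every connected grading class of $A$ is a quotient of one of the form \eqref{coprod}, namely a free product $\coprod_{j=1}^r \tilde{x}^{(j)}(\mathbb{C}^{f_j}G_j)$ of maximal graded-simple classes, and by Corollary \ref{prod} such a class is maximal precisely when it has at most one trivial factor. The first step is therefore to record the constraint imposed by $A$ having exactly two simple summands: by Lemma \ref{indform} each factor $\tilde{x}^{(j)}(\mathbb{C}^{f_j}G_j)$ is the graded-simple class of an algebra with $|\mathrm{Irr}(G_j,f_j)|$ matrix summands, so $\sum_{j=1}^r |\mathrm{Irr}(G_j,f_j)| = 2$. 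Since each summand contributes a positive integer, the only possibilities are $r=1$ with $|\mathrm{Irr}(G_1,f_1)|=2$, or $r=2$ with $|\mathrm{Irr}(G_j,f_j)|=1$ for $j=1,2$. These two cases will produce the two families in the statement.

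In the case $r=1$, the condition $|\mathrm{Irr}(G,f)|=2$ is precisely that $G$ is of double (i.e.\ $2f$-) central type, as recorded in Lemma \ref{biregularity}, and the class $\tilde{x}(\mathbb{C}^fG)$ is maximal by Lemma \ref{xcfg}. Here the underlying algebra is graded-simple, and writing $\mathbb{C}^fG \cong M_{d_1}(\mathbb{C})\oplus M_{d_2}(\mathbb{C})$ with $d_1^2+d_2^2=|G|$, the inducing parameter $m=\epsilon(\tilde{x})$ is forced by dimension matching, $\{m d_1, m d_2\}=\{n_1,n_2\}$; hence $m^2|G|=n_1^2+n_2^2$ and in particular $|G|$ divides $n_1^2+n_2^2$, yielding family (1). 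In the case $r=2$ the algebra splits as a direct sum of two graded-simple matrix summands, each of which is a maximal connected simple grading of one $M_{n_i}(\mathbb{C})$; by Theorem \ref{A} (via \eqref{scgr}) these correspond to pairs $(G_i,\gamma_i)$ with $G_i$ of central type of order dividing $n_i^2$ and $\gamma_i$ an $\mathrm{Aut}(G_i)$-orbit of non-degenerate classes. The at-most-one-trivial-factor condition of Corollary \ref{prod} translates into ``at most one $n_i$ equals $1$'', which is exactly the non-diagonality of \eqref{bisimp}, since by Theorem \ref{A} the only maximal connected grading of $M_1(\mathbb{C})=\mathbb{C}$ is the trivial factor $1(\mathbb{C}\{e\})$; this gives family (2).

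It then remains to verify that these assignments are bijective. The two families are disjoint because a class in family (1) is graded-simple while a class in family (2) has $M_{n_1}(\mathbb{C})$ and $M_{n_2}(\mathbb{C})$ as proper graded ideals, and no graded-equivalence can identify a graded-simple grading with one that is not. Within each family, injectivity and surjectivity follow from the graded-equivalence criterion of Theorem \ref{th:AHequi} together with Lemma \ref{decompequiv}, which guarantees that a graded-equivalence induces a bijection of the graded-simple summands respecting their individual equivalence classes. Finally, when $n_1=n_2$ the two matrix summands are interchangeable, so Lemma \ref{decompequiv} permits swapping the two free-product factors; this is the source of the identification of $(G_1,\gamma_1,G_2,\gamma_2)$ with $(G_2,\gamma_2,G_1,\gamma_1)$. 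The main obstacle I expect is the bookkeeping in this last step: pinning down the inducing parameter $m$ and the dimension matching in family (1), and ensuring the correspondence is exactly one-to-one (rather than merely well-defined) in the presence of the $n_1=n_2$ symmetry, where one must argue that no coincidences arise beyond the prescribed swap.
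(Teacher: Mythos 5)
Your proposal is correct and follows essentially the same route as the paper, which likewise splits the maximal classes into the graded-simple case (a twisted group algebra of a $2f$-central type group, giving family (1)) and the case of two graded-simple matrix summands (giving family (2)), and then simply invokes the general machinery of \S\ref{proofD} (Lemma \ref{scgr2}, Corollary \ref{prod}, Theorem \ref{th:AHequi}); your write-up merely makes the dimension bookkeeping and the bijectivity check explicit where the paper leaves them to the reader.
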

The diagonal algebra $\C\oplus \C$ is not covered by Theorem \ref{121bisimp}. Here, by Theorem \ref{da}, there is a unique maximal connected grading class, namely by the $2f$-central type $C_2$.
\begin{remark}\label{remarkbisimp}
It is proven \cite[Theorem A]{Higgs88} that if $G$ is $2f$-central type, then the two non-equivalent irreducible $f$-projective representations of $G$ are of the same dimension
(in particular $G$ is of order $2m^2$ for some positive integer $m$), i.e.
$$\C^fG=M_{\sqrt{\frac{|G|}{2}}}(\C)\oplus M_{\sqrt{\frac{|G|}{2}}}(\C).$$
Consequently, the first family of classes in Theorem \ref{121bisimp} is empty unless $n_1=n_2$.
In fact, as we shall soon see in Proposition \ref{CGDneces}, the equality $n_1=n_2$ is also a sufficient condition for a bisimple algebra \eqref{bisimp} to admit graded-simple classes.
\end{remark}

Let us study some special groups of double central type.
\begin{proposition}\label{index2}
Suppose that the restriction of a cocycle $f\in Z^2(G,\C^*)$ to a subgroup of $N<G$ of index 2 is non-degenerate.
Then $G$ is of $2f$-central type.
\end{proposition}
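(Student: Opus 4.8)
The plan is to exhibit $B:=\C^{f|_N}N$ as a full matrix subalgebra of $A:=\C^fG$ and then to pin down the simple blocks of $A$ by restricting them to $B$ and counting dimensions.

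First I would observe that the span of $\{u_g : g\in N\}$ inside $\C^fG$ is a unital subalgebra, namely a copy of $\C^{f|_N}N$: the multiplication rule \eqref{twistmult} restricted to pairs in $N$ is exactly the twisted multiplication of $N$ by the restricted cocycle, and the identity $f(e,e)^{-1}u_e$ lies in $B$ since $e\in N$. By hypothesis $f|_N$ is non-degenerate, so $N$ is of central type and, by \eqref{AW}, $B\cong M_d(\C)$ with $d=\sqrt{|N|}$.

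Next I would restrict the simple modules. Let $V$ be a simple $\C^fG$-module, i.e. an irreducible $f$-projective representation of $G$. Because $B\cong M_d(\C)$ is a unital subalgebra of $A$, the unit $1_A\in B$ acts as the identity on $V$, so $V|_B$ is a nonzero module over a full matrix algebra and hence decomposes as $W_0^{\oplus e_V}$, a direct sum of copies of the unique simple $M_d(\C)$-module $W_0$, whose dimension is $d$; here $e_V\ge 1$ is a positive integer. Thus $\dim_\C V = e_V\cdot d$. The dimension count then finishes the argument: by \eqref{AW},
$$
|G|=\dim_\C\C^fG=\sum_{[V]\in\text{Irr}(G,f)}(\dim_\C V)^2=d^2\sum_{[V]}e_V^2 .
$$
Since $[G:N]=2$ we have $|G|=2|N|=2d^2$, whence $\sum_{[V]}e_V^2=2$. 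As each $e_V$ is a positive integer this forces exactly two summands, each with $e_V=1$; that is, $|\text{Irr}(G,f)|=2$, so $G$ is of $2f$-central type. (This incidentally recovers that the two irreducible $f$-projective representations share the dimension $d=\sqrt{|G|/2}$, consistent with Remark \ref{remarkbisimp}.)

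The step requiring the most care is the identification of $B$ as a full matrix algebra together with the fact that every simple $A$-module restricts to a multiple of the unique simple $B$-module; once that is in place the rest is bookkeeping. The main temptation to avoid is attacking the statement through the $f$-regular conjugacy classes of Lemma \ref{biregularity} and the alternating form, which would be considerably more laborious than the module-restriction argument above.
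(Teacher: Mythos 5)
Your proof is correct and takes essentially the same route as the paper's: restrict each irreducible $f$-projective representation to $N$, use that $\C^{f|_N}N$ is a full matrix algebra whose unique simple module has dimension $\sqrt{|N|}$, and finish by counting dimensions. The only cosmetic difference is that the paper bounds each multiplicity separately via $\dim_{\C}(V)^2\leq |G|$ to force $r=1$, whereas you sum $\sum_{[V]}e_V^2=2$ over all irreducibles at once; this is the same bookkeeping.
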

\begin{proof}
Let $V$ be an $f$-projective representation of $G$. Then $V$ decomposes over $N$ as a direct sum of copies of the unique irreducible $f|_N$-projective representation $W$,
that is $V|_N=W^r$ for some $r\geq 1$.
Lemma \ref{dimprojrep} says that dim$_{\C}(W)=\sqrt{|N|}$ and hence
$$|G|\geq\dim_{\C}(V)^2=r^2\text{dim}_{\C}(W)^2=r^2|N|=r^2\frac{|G|}{2}.$$
We get that $r=1$, and so $V|_N=W$ is irreducible over $N$, that is dim$_{\C}(V)=\sqrt{\frac{|G|}{2}}$. Therefore, $G$ admits precisely two irreducible $f$-representations.
This proves the claim.\end{proof}
In the following two theorems we use again the notation $f_d$ for the restriction of $f\in Z^2(G,\C^*)$ to a Hall subgroup $G_d<G$.
\begin{theorem}\label{sylowdoublect}
Let $G$ be of $2f$-central type. Then for every odd $d$, $f_d$ is non-degenerate.
\end{theorem}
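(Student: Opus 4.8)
The plan is to localise the degeneracy of $f$ at the prime $2$ by analysing $f$-regular elements, and then to invoke Theorem~\ref{th:hall}. First I would reduce the statement to odd Sylow subgroups. Since $G_d$ is a Hall subgroup whose Sylow subgroups are precisely the odd Sylow subgroups $G_p$ of $G$ (for the primes $p\mid d$), and since $f_d$ restricts on each such $G_p$ to $f_p:=f|_{G_p}$, Theorem~\ref{th:hall} applied to the group $G_d$ shows that $f_d$ is non-degenerate as soon as $f_p$ is non-degenerate for every odd prime $p\mid d$. Thus it suffices to prove that $f_p$ is non-degenerate for every odd prime $p$ dividing $|G|$, which by Corollary~\ref{regularity} is the assertion that the only $f_p$-regular element of $G_p$ is the identity.

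The key local tool I would establish is a comparison between regularity in a Sylow subgroup and regularity in $G$: if $h\in G_p$ is a nontrivial $p$-element which is $f_p$-regular in $G_p$, then $h$ is already $f$-regular in $G$. Indeed, $\beta_h\colon C_G(h)\to\C^*$, $y\mapsto\alpha_f(h,y)$, is a linear character of $C_G(h)$, and by Lemma~\ref{altformorder} each of its values is a root of unity of order dividing the order of $h$, so the image of $\beta_h$ is a $p$-group. Choosing $G_p$ so that $C_{G_p}(h)=C_G(h)\cap G_p$ is a Sylow $p$-subgroup of $C_G(h)$ (possible since $h\in Z(C_G(h))$: any Sylow $p$-subgroup $Q$ of $C_G(h)$ contains $h$ and extends to a Sylow subgroup $G_p$ of $G$, whence $Q\le C_{G_p}(h)$ forces $C_{G_p}(h)=Q$), the hypothesis says that $\beta_h$ is trivial on a Sylow $p$-subgroup of $C_G(h)$. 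Then the image of $\beta_h$ is a $p$-group whose order divides the index $[C_G(h):C_{G_p}(h)]$; since that index is prime to $p$, the image is trivial, so $\beta_h=1$ and $h$ is $f$-regular.

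With this in hand the argument is short. As $G$ is of $2f$-central type we have $|G|=2m^2$, so a Sylow $2$-subgroup $G_2$ has order $2^{2a+1}$, an odd power of $2$ and in particular not a perfect square. By Lemma~\ref{dimprojrep} the cocycle $f_2$ cannot be non-degenerate, so by Corollary~\ref{regularity} there is a nontrivial $f_2$-regular element $h_2\in G_2$; by the local tool $h_2$ is a nontrivial $f$-regular element of $G$ of $2$-power order. On the other hand, Lemma~\ref{biregularity} says $G$ has exactly one nontrivial $f$-regular class, so every nontrivial $f$-regular element of $G$ is conjugate to $h_2$ and hence has $2$-power order. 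Now fix an odd prime $p$ and suppose, for contradiction, that $f_p$ were degenerate. Then $G_p$ would contain a nontrivial $f_p$-regular element $h$, which by the local tool would be a nontrivial $f$-regular element of $G$ of odd ($p$-power) order — impossible, as it cannot be conjugate to the $2$-element $h_2$. Hence $f_p$ is non-degenerate for every odd $p$, and the reduction of the first paragraph completes the proof.

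The main obstacle is the local comparison lemma, and within it the one genuinely delicate point is arranging that $C_{G_p}(h)$ is a full Sylow $p$-subgroup of $C_G(h)$, so that triviality of $\beta_h$ on the smaller centraliser $C_{G_p}(h)$ already forces triviality on all of $C_G(h)$; this rests on $h$ being central in $C_G(h)$. Everything else — the Hall reduction via Theorem~\ref{th:hall}, the non-square order of $G_2$, and the uniqueness of the nontrivial $f$-regular class from Lemma~\ref{biregularity} — is routine given the results already established.
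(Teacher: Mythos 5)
Your global strategy is genuinely different from the paper's and is appealing: use the non-square order of $G_2$ to force the unique nontrivial $f$-regular class of Lemma \ref{biregularity} to consist of $2$-elements, and then exclude odd-order regular elements. The paper argues instead purely by multiplicities: it writes $\C^fG=W_1^{m}\oplus W_2^{m}$ with $m=\sqrt{|G|/2}$, restricts to $\C^{f_d}G_d$, notes that every irreducible $f_d$-representation $W$ occurs in $\C^fG$ with multiplicity $\dim_{\C}(W)\cdot[G:G_d]$, which must be divisible by $m$, and extracts $d$-parts to get $|G_d|\mid\dim_{\C}(W)^2$. However, your route has a genuine gap, located exactly where you flag the delicacy: the local comparison lemma, on which every step of your argument (including the production of the $2$-element regular class) depends.

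The gap is an order-of-quantifiers problem. Degeneracy of $f_p$ produces, via Corollary \ref{regularity}, a nontrivial $h$ with $\beta_h=\alpha_f(h,-)$ trivial on $C_{G_p}(h)$ for the \emph{given} Sylow subgroup $G_p$; your proof of the local lemma then replaces $G_p$ by a Sylow subgroup $P$ adapted to $h$ so that $C_P(h)$ is Sylow in $C_G(h)$. But after this substitution the only information you retain is that $\beta_h$ vanishes on $C_{G_p}(h)=G_p\cap C_G(h)$, which is merely some $p$-subgroup of $C_G(h)$ containing $h$ and is in general a \emph{proper} subgroup of $C_P(h)$; triviality of a character of $p$-power order on a non-Sylow $p$-subgroup forces nothing. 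The intersection $G_p\cap C_G(h)$ really can fail to be Sylow in $C_G(h)$ for $h\in G_p$ (already in $S_4$ with $h=(12)(34)$ and $G_p\neq C_G(h)$ the intersection is the Klein group inside a centralizer of order $8$), and $f|_{G_p}$-regularity is a property of the pair $(h,G_p)$ that does not transfer to the non-conjugate pair $(h,P)$. Nor can one repair this by locating the regular element in $Z(G_p)$, where $C_{G_p}(h)=G_p$ is automatically Sylow in $C_G(h)$: a degenerate class on a $p$-group need not admit any nontrivial regular element in the center (the extraspecial group of order $p^3$ carries classes $[f]$ with $\alpha_f(z,a)\neq 1$ for $z$ central, and every class on it is degenerate). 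What you need is essentially an element-wise sharpening of the implication $(1)\Rightarrow(3)$ of Theorem \ref{th:hall}; the paper, following DeMeyer--Janusz and Passman, proves such statements by multiplicity counting precisely because the regular-element route does not localize cleanly. To complete your proof you would have to either prove the local lemma by such a counting argument (at which point you have reproduced the paper's proof) or find a substitute for it.
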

\begin{proof}
The theorem is proven in a similar way to the proof of Theorem \ref{th:hall} given in \cite[Lemma 2.7]{Passman2010}.
Let $W_1$ and $W_2$ be the two non-equivalent irreducible $f$-projective representations of $G$.
By Remark \ref{remarkbisimp}, $W_1$ and $W_2$ are of the same dimension, namely $m:=\sqrt{\frac{|G|}{2}}$. This is also the multiplicity of $W_1$ and $W_2$ in $\C^fG$,
viewed as the regular module over itself. It can hence be written as
\begin{equation}\label{samemult}
\C^fG=W_1^{m}\oplus W_2^{m}.
\end{equation}

Next, consider $\C^fG$ as a free module over $\C^{f_d}G_d$ of rank $[G:G_d]$ (the index of $G_d$ in $G$).
Then any irreducible $f_d$-projective representation $W$ of $G_d$ is of multiplicity dim$_{\C}(W)\cdot[G:G_d]$ in the $\C^{f_d}G_d$-module $\C^fG$.
Any such irreducible $f_d$-projective representation appears as a constituent of the restriction to $G_d$ of either $W_1$ or $W_2$.
By \eqref{samemult}, the multiplicity of an irreducible $f_d$-projective representation in the $\C^{f_d}G_d$-module $\C^fG$ is divisible by $m$.
Consequently, $m$ divides $\dim_{\C}(W)\cdot[G:G_d].$
We compute the $d$-part of both sides, taking into account that the index $[G:G_d]$ is prime to $d$. The $d$-part $m_d$ divides $\dim_{\C}(W)$,
and hence $(\frac{|G|}{2})_d$ divides $\dim_{\C}(W)^2$. Since $d$ is odd we obtain that $|G|_d=|G_d|$ divides $\dim_{\C}(W)^2$.
This can happen when the order of $G_d$ is exactly equal to $\dim_{\C}(W)^2$,
equivalently, when the regular module $\C^{f_d}G_d$ is decomposed as $$\C^{f_d}G_d=W^{\dim_{\C}(W)}.$$
In other words, Irr$(G_d,f_d)=\{[W]\}$, and $f_d$ is non-degenerate.
\end{proof}
\begin{theorem}
Let $m$ be an odd integer. A double central type group of order $2m^2$ is a semidirect product $G=G_{2'}\rtimes_{\eta}C_2$,
where the $2'$-Hall subgroup $G_{2'}$ is of central type and the action $\eta$ is symplectic.
\end{theorem}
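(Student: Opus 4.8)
The plan is to build the complement and the normal Hall subgroup first, then read off the central-type structure of the complement's kernel, and finally deduce symplecticity directly from the image of the restriction map on cohomology. All three ingredients are already available in the excerpt, so the argument should be short.

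First I would obtain the semidirect product decomposition. Since $m$ is odd, the $2$-part of $|G|=2m^2$ is exactly $2$, so a $2$-Sylow subgroup $G_2$ of $G$ is cyclic of order $2$, and $2$ is the smallest prime dividing $|G|$. Lemma \ref{cor:semi}(1) then applies directly and produces a normal $2'$-Hall subgroup $G_{2'}\lhd G$ of order $m^2$ together with a complement $G_2\cong C_2$, giving $G=G_{2'}\rtimes_{\eta}C_2$ with $\eta$ the conjugation action. (Equivalently, $G_{2'}$ has index $2$ and is automatically normal.)

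Next I would show that $G_{2'}$ is of central type. Let $f\in Z^2(G,\C^*)$ witness that $G$ is of double central type, that is $|\text{Irr}(G,f)|=2$. Applying Theorem \ref{sylowdoublect} to the odd integer $d=m$, whose associated Hall subgroup is precisely the odd-order subgroup $G_{2'}$, the restriction $f_{2'}=\text{res}^G_{G_{2'}}f$ is non-degenerate. Hence $G_{2'}$ is of central type and carries the non-degenerate class $[f_{2'}]$.

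Finally I would verify that $\eta$ is symplectic. Because $G_{2'}$ is normal of index $2$, its Weyl group is $W(G_{2'})=\mathcal{N}_G(G_{2'})/G_{2'}=G/G_{2'}\cong C_2$ (see \eqref{weyl}), and the complement $G_2\cong C_2$ maps isomorphically onto it; consequently the action of $\eta(C_2)$ on $H^2(G_{2'},\C^*)$ coincides with the Weyl-group action. Since $[f_{2'}]$ lies in the image of $\text{res}^G_{G_{2'}}$, it is fixed by $W(G_{2'})$ by the general fact recalled in \S\ref{autact}, hence fixed by $\eta(C_2)$ under the action \eqref{eq:action}; that is, $\eta(C_2)\subseteq\text{Stab}_{\text{Aut}(G_{2'})}([f_{2'}])=\text{Sp}([f_{2'}])$. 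As $[f_{2'}]$ is non-degenerate, this is exactly the assertion that $\eta$ is symplectic in the sense of Definition \ref{symplecticdef}. I do not expect a serious obstacle here; the only point requiring care is the compatibility between the conjugation action $\eta$, valued in $\text{Aut}(G_{2'})$, and the Weyl-group action on cohomology. One notes that inner automorphisms act trivially on $H^2(G_{2'},\C^*)$, so the two actions agree, and that $\text{Inn}(G_{2'})$ is in any case contained in every symplectic stabilizer, so the inclusion $\eta(C_2)\subseteq\text{Sp}([f_{2'}])$ holds regardless of whether $\eta$ lands in inner or outer automorphisms.
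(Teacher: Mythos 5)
Your proof is correct and, in its core, follows the same route as the paper's: restrict the witnessing cocycle to the odd-order Hall subgroup, invoke Theorem \ref{sylowdoublect} (with $d=m$, whose associated Hall subgroup is indeed $G_{2'}$) to get non-degeneracy, and then read off symplecticity from the fact that restricted classes are invariant under the Weyl group $W(G_{2'})=G/G_{2'}\cong C_2$, which is exactly the conjugation action $\eta$ of the complement. The one genuine divergence is in how you produce the decomposition $G=G_{2'}\rtimes_{\eta}C_2$: the paper first cites solvability of groups of $2f$-central type (Higgs) to obtain a $2'$-Hall subgroup, observes it is normal being of index $2$, and then applies Schur--Zassenhaus (Theorem \ref{sz}); you instead apply Lemma \ref{cor:semi}(1) to the cyclic $2$-Sylow subgroup $C_2$, i.e.\ Burnside's normal complement theorem, which yields the normal $2'$-Hall subgroup and its complement in one stroke and avoids the appeal to solvability altogether. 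Both are valid; your version is marginally more self-contained, while the paper's phrasing fits its broader use of Hall-subgroup machinery. Note also that the paper's proof additionally establishes the converse implication (that every such symplectic semidirect product is of double central type, via Proposition \ref{index2}); that is not part of the statement as quoted, so its absence from your argument is not a gap.
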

\begin{proof}
Let $G=G_{2'}\rtimes_{\eta}C_2$, where the action $\eta$ stabilizes some non-degenerate cohomology class $[f_{2'}]\in H^2(G_{2'},\C^*)$.
Then by \eqref{eq:semdircoh}, $[f_{2'}]$ is restricted from a class $[f]\in H^2(G,\C^*)$. From Proposition \ref{index2} we deduce that $G$ is of $2f$-central type.
Conversely, a $2f$-central type group $G$ is solvable \cite[Theorem B]{Higgs88}, therefore it has a $2'$-Hall subgroup $G_{2'}$. This subgroup is of index 2 in $G$, and hence it is normal.
By Theorem \ref{sz}, $G=G_{2'}\rtimes_{\eta} C_2$ for some action $\eta:C_2\to$Aut$(G_{2'})$.
By Theorem \ref{sylowdoublect}, the restriction $f_{2'}$ of $f$ to $G_{2'}$ is non-degenerate.
In particular, $G_{2'}$ is of central type and $f_{2'}$ is $\eta$-invariant, that is $\eta$ is symplectic.
\end{proof}

\subsection{Completely graded decomposition algebras}\label{cgda}
We say that a connected grading of a finite-dimensional semisimple complex algebras is {\it completely decomposed} if all its graded-simple summands are simple.
An algebra \eqref{eq:kraoto3} is {\it completely graded decomposition} (CGD) if all its connected gradings are completely decomposed.
A primal family of examples would be bisimple algebras \eqref{bisimp} with $n_1\neq n_2$. Theorem \ref{121bisimp} and Remark \ref{remarkbisimp} ensure that these algebras are CGD.
\begin{proposition}\label{CGDneces}
An algebra \eqref{eq:kraoto3} is CGD only if all the multiplicities $b(n)$ are at most 1.
\end{proposition}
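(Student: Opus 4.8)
The plan is to establish the contrapositive: if some multiplicity $b(n)$ is at least $2$, then $A$ admits a connected grading which is not completely decomposed, so that $A$ fails to be CGD.

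First I would isolate a summand $M_n(\C)\oplus M_n(\C)$ of $A$ and grade it as a single graded-simple, yet non-simple, algebra. To this end, consider the ordinary group algebra $\C C_2$ of the cyclic group $C_2$; its unique cohomology class is trivial and in particular degenerate, since $|\text{Irr}(C_2,f)|=2$. Choosing any $x\in\N C_2$ with $\epsilon(x)=n$, the induced grading $x(\C C_2)$ is, by Lemma \ref{indform}, a graded class of $\bigoplus_{[W]\in\text{Irr}(C_2,f)}M_{n\cdot\dim(W)}(\C)=M_n(\C)\oplus M_n(\C)$, because both irreducible $f$-projective representations of $C_2$ are $1$-dimensional. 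Being induced from a twisted group algebra, this $C_2$-grading is graded-simple; and since the cocycle on $C_2$ is degenerate, Theorem \ref{th:BSZ} guarantees that the underlying algebra $M_n(\C)\oplus M_n(\C)$ is not simple. Thus $x(\C C_2)$ is a graded-simple summand that is not simple. Its connectedness is immediate from \eqref{eq:suppinduce}, as $\text{supp}_{C_2}(\C C_2)=C_2$ already generates the group.

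Next I would grade the remaining simple components: write $A=\left(M_n(\C)\oplus M_n(\C)\right)\oplus A'$, where $A'$ collects all the other matrix summands, and equip each simple summand of $A'$ with any connected simple grading, for instance the elementary one of Corollary \ref{th:elementary}. Forming the free product grading $\coprod$ of the $C_2$-grading $x(\C C_2)$ with these gradings yields, by Lemma \ref{directsum}, a connected grading of the whole algebra $A$ (when $A'=0$ the grading $x(\C C_2)$ already works on its own). This grading has $x(\C C_2)$ as a graded-simple summand that fails to be simple, so the grading is not completely decomposed and $A$ is not CGD, completing the contrapositive.

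The argument is essentially a bookkeeping assembly of results already established, so no single step is a genuine obstacle; the only point requiring care is verifying that the induced class $x(\C C_2)$ really realizes $M_n(\C)\oplus M_n(\C)$ with both blocks of the correct size $n$ and remains connected for the chosen $x$, which follows directly from Lemma \ref{indform} and the support formula \eqref{eq:suppinduce}.
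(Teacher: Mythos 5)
Your proof is correct and follows essentially the same strategy as the paper: exhibit a graded-simple but non-simple summand by inducing the ordinary group algebra of an abelian group (you use $C_2$ on $M_n(\C)\oplus M_n(\C)$; the paper uses an abelian group of order $b(n)$ on all of $M_n(\C)^{b(n)}$) and then extend to a connected grading of $A$. The only cosmetic difference is that the paper grades the remaining components by the trivial element of the same group rather than forming a free product with gradings of the other summands; both extensions are valid.
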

\begin{proof}
Assume that there exists a multiplicity $b(n)$ which is larger than 1. Then the subalgebra $M_n(\C)^{b(n)}$ is not simple but admits the graded-simple class $n(\C G)$
for any abelian group $G$ of order $b(n)$. Extending this class to a connected $G$-grading class of $A$ by grading all the other components by, say, the trivial element, we deduce that $A$ is not CGD.
\end{proof}
The converse direction of Proposition \ref{CGDneces} is likely to hold.
\begin{conjecture}\label{conjCDG}
An algebra \eqref{eq:kraoto3} is CGD if all the multiplicities
$b(n)$ are at most 1.
\end{conjecture}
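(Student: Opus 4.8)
The plan is to reduce Conjecture \ref{conjCDG} to a single representation-theoretic statement about twisted group algebras, which turns out to be equivalent to it. Since every $b(n)\le 1$, write $A=\bigoplus_{i=1}^kM_{n_i}(\C)$ with $n_1,\dots,n_k$ pairwise distinct, and let $\mathcal{H}$ be an arbitrary connected grading of $A$. By Theorem \ref{prop:decsemsi} it is a direct sum of graded-simple ideals, and by Theorem \ref{th:BSZ} the $j$-th such ideal is a class $x_j(\C^{f_j}G_j)$; by Lemma \ref{indform} it equals, as an ungraded algebra, $\bigoplus_{[W]\in\text{Irr}(G_j,f_j)}M_{m_j\dim_\C W}(\C)$ with $m_j=\epsilon(x_j)$. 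Now $A$ is the internal direct sum of all these matrix blocks over all $j$ and all $[W]$, so the multiset of block sizes equals $\{n_1,\dots,n_k\}$; as the $n_i$ are distinct, all block sizes are distinct. In particular the numbers $m_j\dim_\C W$ attached to a single ideal are distinct, forcing the dimensions $\dim_\C W$, $[W]\in\text{Irr}(G_j,f_j)$, to be distinct as well. Hence if some $f_j$ were degenerate (so $|\text{Irr}(G_j,f_j)|\ge 2$ by Lemma \ref{dimprojrep}), that ideal would exhibit two inequivalent irreducible $f_j$-projective representations of distinct dimensions.

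Consequently the conjecture follows from the following \emph{Key Lemma}: if $f\in Z^2(G,\C^*)$ is degenerate, then two inequivalent irreducible $f$-projective representations of $G$ share the same dimension. This implication is in fact an equivalence: were the Key Lemma to fail for some pair $(G,f)$, the algebra $A=\C^fG=\bigoplus_{[W]}M_{\dim_\C W}(\C)$ would have all multiplicities equal to $1$, yet its natural connected $G$-grading $1(\C^fG)$ is graded-simple (a twisted group algebra) and not simple, so $A$ would fail to be CGD while satisfying the hypothesis of the conjecture. Thus proving Conjecture \ref{conjCDG} amounts exactly to proving the Key Lemma.

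The Key Lemma is settled in three regimes. When $|\text{Irr}(G,f)|=2$ it is Higgs' theorem \cite{Higgs88} that the two irreducible $f$-projective representations have equal dimension (see Remark \ref{remarkbisimp}), so distinct dimensions are impossible. When $[f]$ is trivial it becomes the statement that a nontrivial finite group has two equal ordinary character degrees: immediate when $G\neq G^{\shortmid}$ (the at least two linear characters), and otherwise handled for a non-real group by pairing a character with its complex conjugate, the delicate residue being the real perfect groups. Finally, suppose the form $\alpha_f$ of \eqref{altform} has a nontrivial central radical, i.e.\ there is a central $g\ne e$ which is $f$-regular, so $\alpha_f(g,h)=1$ for all $h$ (Definition \ref{regularitydef}); then $u_gu_h=u_hu_g$ for every $h$, so $u_g$ is a nonscalar central unit (its component $\C u_g$ is distinct from $\C\cdot 1$) with $u_g^{\mathrm{ord}(g)}\in\C^{*}\cdot 1$, whose eigenspaces split $\C^fG$ into at least two nonzero central blocks. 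If in addition $g\notin G^{\shortmid}$, pick $\psi\in\mathrm{Hom}(G,\C^*)$ with $\psi(g)\ne 1$; then $u_h\mapsto\psi(h)u_h$ is an algebra automorphism scaling $u_g$ by $\psi(g)$, hence permuting these blocks without fixed points, so they group into orbits of mutually isomorphic algebras and some irreducible dimension repeats.

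The main obstacle is the remaining case, where all $f$-regular classes may be non-central, or every central $f$-regular element lies in $G^{\shortmid}$ (so no linear character detects it and the twisting automorphism above is unavailable). The natural tool is the finite central extension $1\to C_t\to\hat G\to G\to 1$ classified by $[f]$, under which $\text{Irr}(G,f)$ is identified, dimension by dimension, with the irreducible characters of $\hat G$ lying over a fixed faithful character of the central $C_t$, and non-degeneracy of $f$ becomes full ramification of that character. One would then analyse the radical of $\alpha_f$ and reduce along it by Clifford theory, inducting on $|G|$ to force a coincidence of degrees. Guaranteeing that this reduction yields a genuine repetition of dimensions, rather than merely a finer block decomposition, is the crux — and is presumably why the statement is recorded here only as a conjecture.
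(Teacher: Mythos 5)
This statement is a \emph{conjecture} in the paper, and the paper does not prove it: it only observes (in the paragraph following Conjecture \ref{conjCDG}) that the statement is equivalent to Higgs' conjecture --- that a degenerate cocycle $f\in Z^2(G,\C^*)$ always admits two inequivalent irreducible $f$-projective representations of the same dimension --- and then proves the special case recorded as Theorem \ref{pfactor<3} using the known instances of that conjecture. Your reduction is correct and is essentially the same equivalence the paper states: the decomposition via Theorems \ref{prop:decsemsi} and \ref{th:BSZ} into classes $x_j(\C^{f_j}G_j)$, the observation that distinct block sizes force distinct projective degrees within each twisted group algebra, and the converse via the graded-simple class $1(\C^fG)$ all check out. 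Your ``Key Lemma'' is precisely Higgs' conjecture.

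The genuine gap is therefore exactly where you say it is: the Key Lemma is not proved, and your final paragraph concedes that the Clifford-theoretic reduction along the radical of $\alpha_f$ is only a programme, not an argument. Two smaller remarks. First, your ``regime 2'' (cohomologically trivial $f$) is not actually delicate: that a nontrivial finite group has two irreducible ordinary characters of equal degree is a known fact, cited in the paper as \cite[Lemma 1.1]{higgs} in the proof of Theorem \ref{pfactor<3}, so the ``real perfect groups'' residue you flag is already closed in the literature. Second, your ``regime 3'' argument (a central $f$-regular $g\notin G^{\shortmid}$, twisting by a linear character $\psi$ with $\psi(g)\neq 1$ to permute the eigenblocks of the central unit $u_g$ fixed-point-freely) is a nice and correct partial result, but it does not touch the hard case where every $f$-regular class is non-central or every central $f$-regular element lies in $G^{\shortmid}$. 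So what you have written is an honest reformulation plus partial progress, matching the paper's own stance; it is not a proof of the conjecture, and the paper contains none either.
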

Conjecture \ref{conjCDG} is actually a different way to formulate a conjecture of R.J. Higgs \cite{higgs}, which says that a cocycle $f\in Z^2(G,\C^*)$ ($G$ is finite) is either non-degenerate, or
there exist distinct $[W_1],[W_2]\in$Irr$(G,f)$ of the same dimension.
Indeed, if $f\in Z^2(G,\C^*)$ refutes Higgs' conjecture, then the simple components of the (non-simple) algebra $A=\bigoplus_{[W]\in\text{Irr}(G,f)}\text{End}_{\C}(W)$ are of distinct dimensions.
Nevertheless, the connected grading class $\C^fG$ of $A$ is certainly not completely decomposed. Hence the semisimple algebra $A$ is not CGD, and a counterexample to Conjecture \ref{conjCDG} is found.
Conversely, suppose that all the multiplicities $b(n)$ of an algebra \eqref{eq:kraoto3} are 1. If $A$ is not CGD, and by that is a counterexample to Conjecture \ref{conjCDG},
then it admits a connected grading which is not completely decomposed.
Then one of its graded-simple summands is not simple. Then the grading on this summand is induced from a twisted group algebra which refutes Higgs' conjecture.

By the discussion in \S\ref{proofD} and Proposition \ref{CGDneces} we get
\begin{theorem}
There is a one-to-one correspondence between the maximal connected grading classes of a CGD algebra $A=\bigoplus_{i=1}^sM_{n_i}(\C)$ and
the $2s$-tuples of the form
$(G_1,\gamma_1,G_2,\gamma_2, \cdots, G_s,\gamma_s)$, where for $i=1,\cdots, s$, $G_i$ are groups of central type and of orders dividing $n_i^2$, and $\gamma_i$ are Aut$(G_i)$-orbits of
non-degenerate cohomology classes in $H^2(G_i,\mathbb{C}^*)$.
\end{theorem}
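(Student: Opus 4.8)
The plan is to deduce this from Theorem C by specializing the formal-sum description to the CGD situation. First I would record that, by Proposition \ref{CGDneces}, the CGD hypothesis forces every multiplicity $b(n)$ to be at most $1$; thus the $n_i$ are pairwise distinct and $\chi_A(z)=\sum_{i=1}^s n_i^{-z}$. By Theorem C the maximal connected grading classes of $A$ are in one-to-one correspondence with the formal sums $\sum a(m,\gamma)\gamma^m$ (non-negative integer coefficients, with $a(m,\gamma)\le 1$ on trivial terms) satisfying $\chi_A(z)=\sum a(m,\gamma)m^{-z}\zeta_{\gamma}(z)$. The whole argument then amounts to showing that, under the CGD hypothesis, these formal sums are exactly the ones encoded by the $2s$-tuples in the statement.

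The key step is that every orbit $\gamma$ occurring in such a formal sum must be an orbit of a non-degenerate class. Indeed, the term $\gamma^m$ corresponds (as in the proof of Theorem C) to the graded-simple free-product factor $\tilde{x}(\C^fG)$ grading $\bigoplus_{[W]\in\text{Irr}(G,f)}M_{n(W)}(\C)$; if $[f]$ were degenerate then $|\text{Irr}(G,f)|\ge 2$ and this factor would be graded-simple but not simple, so the grading would fail to be completely decomposed, contradicting that $A$ is CGD. Hence each such $G$ is of central type, $|\text{Irr}(G,f)|=1$, and by Lemma \ref{dimprojrep} the unique irreducible has dimension $\sqrt{|G|}$, so $\zeta_{\gamma}(z)=(\sqrt{|G|})^{-z}$ is a single term and the contribution of $\gamma^m$ is $a(m,\gamma)(m\sqrt{|G|})^{-z}$. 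Matching this against $\sum_{i=1}^s n_i^{-z}$ with the $n_i$ distinct forces, for each $i$, a unique term with $m\sqrt{|G|}=n_i$ and coefficient $1$; writing $G_i$ for the corresponding group, $\gamma_i$ for its orbit and $m_i=n_i/\sqrt{|G_i|}$, one gets $m_i^2|G_i|=n_i^2$, so that $|G_i|$ divides $n_i^2$. Thus the formal sum equals $\sum_{i=1}^s\gamma_i^{m_i}$ and is completely recorded by the ordered $2s$-tuple $(G_1,\gamma_1,\cdots,G_s,\gamma_s)$, the exponent $m_i$ being determined by $n_i$ and $|G_i|$.

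Conversely, any such tuple yields the formal sum $\sum_{i=1}^s\gamma_i^{m_i}$ with $m_i=n_i/\sqrt{|G_i|}$, which satisfies the numerical identity of Theorem C; the side condition $a(m,\gamma)\le 1$ on a trivial term (which forces $G=\{e\}$, $m=1$, hence $n_i=1$) is automatic, since the distinctness of the $n_i$ allows at most one index with $n_i=1$, and for that index the tuple entry is forced to be the trivial pair. By Theorem C the tuple therefore corresponds to a genuine maximal connected class, and distinct tuples give distinct formal sums, hence distinct classes. I expect the only point requiring real care to be the opening claim of the second paragraph, namely extracting from the CGD hypothesis that no degenerate orbit can appear, together with the bookkeeping showing that the distinctness of the $n_i$ (guaranteed by Proposition \ref{CGDneces}) removes the symmetrization that was needed in the bisimple case of Theorem \ref{121bisimp}.
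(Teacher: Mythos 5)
Your proposal is correct and follows exactly the route the paper intends: the paper derives this theorem in one line from the discussion in \S\ref{proofD} (Theorem C and its proof) together with Proposition \ref{CGDneces}, and your write-up is a faithful expansion of that derivation — the CGD hypothesis rules out degenerate orbits because a graded-simple factor $\tilde{x}(\C^fG)$ with $[f]$ degenerate would be non-simple, and the distinctness of the $n_i$ then pins down the formal sum as $\sum_{i=1}^s\gamma_i^{m_i}$ with $m_i\sqrt{|G_i|}=n_i$. No gaps; this matches the paper's argument.
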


To provide evidence to Conjecture \ref{conjCDG}, we need the following folklore result.
\begin{lemma}\label{lemma:cohdiv} (see \cite[Lemma 1.2(i)]{Higgs88})
Let $G$ be a finite group. The order of any cohomology class $[f]\in H^2(G,\C ^*)$ divides the dimension of each $f$-projective representation.
In particular, if there is an $f$-projective representation of dimension 1, then $f$ is cohomologically trivial.
\end{lemma}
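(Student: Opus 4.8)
The plan is to exploit the determinant of an $f$-projective representation. First I would fix an $f$-projective representation $\rho\colon G\to GL(V)$ of dimension $d=\dim_{\C}V$, so that
\[
\rho(g_1)\rho(g_2)=f(g_1,g_2)\,\rho(g_1g_2),\qquad g_1,g_2\in G.
\]
Applying the multiplicative homomorphism $\det\colon GL(V)\to\C^*$ to both sides turns the scalar factor $f(g_1,g_2)$ (acting on $V$ as $f(g_1,g_2)\cdot\mathrm{Id}_V$) into its $d$-th power, yielding
\[
\det\rho(g_1)\cdot\det\rho(g_2)=f(g_1,g_2)^d\,\det\rho(g_1g_2).
\]

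Next I would read this as a coboundary condition. Setting $\lambda(g):=\det\rho(g)\in\C^*$, the relation above rearranges to
\[
f(g_1,g_2)^d=\lambda(g_1)\lambda(g_2)\lambda(g_1g_2)^{-1},
\]
whose right-hand side is precisely the coboundary of the $1$-cochain $\lambda\colon G\to\C^*$. Hence $f^d$ is a coboundary in $B^2(G,\C^*)$, so that $[f]^d=[f^d]$ is trivial in $H^2(G,\C^*)$. Writing $n$ for the order of $[f]$, this forces $n\mid d$, which is exactly the assertion that the order of $[f]$ divides the dimension of each $f$-projective representation. The ``in particular'' clause then follows by specializing to $d=1$: in that case $n\mid 1$, so $n=1$ and $[f]$ is cohomologically trivial.

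This argument is essentially routine and I do not foresee a serious obstacle. The only point requiring care is the bookkeeping of conventions: one must confirm that the scalar $f(g_1,g_2)$ contributes the factor $f(g_1,g_2)^d$ under the determinant (because it rescales a $d\times d$ matrix), and that the resulting multiplicative relation on $\lambda$ matches the standard $2$-coboundary formula for $H^2(G,\C^*)$ with trivial action used throughout the paper. Since $d$ is unaffected by replacing $f$ with a cohomologous cocycle, the conclusion is consistent with $[f]$ being a cohomology-class invariant.
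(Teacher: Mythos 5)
Your determinant argument is correct and is precisely the standard proof of this folklore fact: the paper itself supplies no proof, deferring to \cite[Lemma 1.2(i)]{Higgs88}, which argues in exactly this way (take $\det$ of the defining relation, observe that $f^{\dim V}$ becomes the coboundary of $g\mapsto\det\rho(g)$, and conclude that the order of $[f]$ divides $\dim V$). Nothing further is needed.
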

\begin{theorem}\label{pfactor<3}
Let $n_1,\cdots, n_s$ be distinct positive integers such that every prime $p$ is a factor of no more than two of them. Then $A=\bigoplus_{i=1}^sM_{n_i}(\C)$ is CGD.
\end{theorem}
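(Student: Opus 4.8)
The plan is to reduce the statement to a question about a single graded-simple summand and then to use the hypothesis on prime factors to collapse any potential obstruction into a case already settled by R.J. Higgs. By Theorem \ref{prop:decsemsi} and Theorem \ref{th:BSZ}, a connected grading of $A$ fails to be completely decomposed exactly when one of its graded-simple summands $B$ has the form $x(\C^fG)$ with $f$ \emph{degenerate}; here $\epsilon(x)=m$ and, by Lemma \ref{indform}, the simple components of $B$ have dimensions $m\cdot\dim_\C(W)$ as $[W]$ ranges over $\text{Irr}(G,f)$. Since $B=\bigoplus_{i\in I}M_{n_i}(\C)$ for some $I\subseteq\{1,\dots,s\}$ and the $n_i$ are pairwise distinct, matching the two decompositions forces the $\dim_\C(W)$ to be pairwise distinct and $|I|=r:=|\text{Irr}(G,f)|$; moreover these component dimensions $m\cdot\dim_\C(W)$ are among $n_1,\dots,n_s$, so each prime still divides at most two of them. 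It therefore suffices to prove that under these constraints $r=1$, i.e. that $f$ is non-degenerate.

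First I would settle the case $r=2$: then $G$ is of double central type, and Remark \ref{remarkbisimp} (that is, \cite[Theorem A]{Higgs88}) forces the two irreducible $f$-projective representations to have equal dimension, so the two components of $B$ would coincide, contradicting the distinctness of the $n_i$. For $r\ge 3$ the prime hypothesis does the work. Any prime dividing $m$ divides all $r\ge 3$ of the integers $m\cdot\dim_\C(W)$, contradicting that it may divide at most two of the $n_i$; hence $m=1$. Next, writing $o$ for the order of $[f]$ in $H^2(G,\C^*)$, Lemma \ref{lemma:cohdiv} gives $o\mid\dim_\C(W)$ for every $W$, so any prime dividing $o$ divides all $r\ge 3$ dimensions $\dim_\C(W)=n_i$, which is again impossible; hence $o=1$, the class $[f]$ is trivial, and $B\cong\C G$ is an \emph{ordinary} group algebra possessing $r\ge 3$ irreducible representations of pairwise distinct degrees.

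The remaining task is thus the classical, non-projective instance of Higgs' conjecture (Conjecture \ref{conjCDG}): a finite group all of whose ordinary irreducible characters have distinct degrees must be trivial. Exactly $[G:G^{\shortmid}]$ of these degrees equal $1$, so distinctness of the degrees forces $[G:G^{\shortmid}]=1$, i.e. $G$ is perfect, hence non-solvable, with $r\ge 3$. I expect this perfect case to be the main obstacle. The way I would finish is to observe that a non-real irreducible character pairs with its complex conjugate into two \emph{distinct} characters of the same degree, so distinctness of degrees forces every irreducible character of $G$ to be real-valued (equivalently, $G$ ambivalent), and then to rule out a nontrivial perfect group of this type, either by appealing to the known verification of Higgs' conjecture in the ordinary case or by a Frobenius--Schur/Galois argument on the real characters. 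Once this is in place, every candidate for a non-simple graded-simple summand has been excluded, so every connected grading of $A$ is completely decomposed and $A$ is CGD.
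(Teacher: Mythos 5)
Your argument follows the paper's proof essentially step for step: reduce to a non-simple graded-simple summand $x(\C^fG)$, eliminate the case $|\text{Irr}(G,f)|=2$ via Remark \ref{remarkbisimp}, use Lemma \ref{lemma:cohdiv} together with the hypothesis on prime factors to force $[f]$ to be cohomologically trivial when there are at least three summands, and then invoke the ordinary (trivial-cocycle) case of Higgs' conjecture. Two small remarks. First, your detour through $\epsilon(x)=1$ is harmless but unnecessary: a prime dividing the order of $[f]$ would divide every $\dim_\C(W)$, hence every $m\cdot\dim_\C(W)$, i.e.\ at least three of the $n_i$, which already contradicts the hypothesis. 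Second, the one place where your write-up does not close is the final step, and your proposed elementary route there does not suffice as sketched: distinctness of degrees does force all irreducible characters of $G$ to be real-valued, which kills non-perfect groups and (by Burnside) nontrivial groups of odd order, but it does not by itself dispose of a nontrivial perfect group of even order. The paper resolves this exactly as your first alternative suggests, by citing \cite[Lemma 1.1]{higgs} for the fact that a nontrivial finite group always admits two distinct irreducible characters of the same degree; with that citation in place your proof coincides with the paper's.
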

\begin{proof}
Suppose, by negation, that there exists a connected grading of $A$ which is not completely decomposed. Then it admits a graded-simple ideal which is not simple.
By Theorem \ref{th:BSZ}, this graded ideal is induced from a twisted group algebra, say $\C^fG$, which decomposes to at least two simple algebras, all of distinct dimensions.
By Remark \ref{remarkbisimp}, this decomposition must admit strictly more than two simple summands.
Now, by Lemma \ref{lemma:cohdiv}, the order of $[f]$ in $H^2(G,\C^*)$ must divide the dimensions of all the
simple summands of $\C^fG$. However, there are more than two of those, and so by the hypothesis no prime number divides them all. Consequently, $f$ is cohomologically trivial. This is one of
the cases where Higgs' conjecture is known to hold. More precisely, by \cite[Lemma 1.1]{higgs} $\C^fG$ must admit two simple summands of the same dimension, which is a contradiction.
\end{proof}

Theorem D is a consequence of Theorem \ref{pfactor<3} and the following claim.
\begin{theorem}
Let $A=\bigoplus_{i=1}^sM_{n_i}(\C)$ be a CGD algebra. Then $$\pi_1(A)=\coprod_{i=1}^s\pi_1(M_{n_i}(\C)).$$
\end{theorem}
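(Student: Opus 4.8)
The plan is to reduce the statement to Lemma \ref{+free} by showing that in a CGD algebra the simple Artin--Wedderburn summands $M_{n_i}(\C)$ are forced to be graded ideals under \emph{every} connected grading, and then to iterate. First I would record that, since $A$ is CGD, Proposition \ref{CGDneces} guarantees that all multiplicities equal $1$, so the integers $n_1,\dots,n_s$ may be taken to be pairwise distinct; this distinctness is what will pin down the graded-simple decomposition.

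The key step is the following claim: for any connected grading $\mathcal{G}$ of $A$ by a group $\Gamma$, each summand $M_{n_i}(\C)$ is a graded ideal. To see this, I would invoke Theorem \ref{prop:decsemsi} to write $\mathcal{G}$ as a direct sum of graded-simple ideals $A=\bigoplus_k B_k$, where each $B_k$ is a (two-sided) graded ideal of $A$. The CGD hypothesis says that every $B_k$ is simple as an algebra. Now the two-sided ideals of $A=\bigoplus_{i=1}^s M_{n_i}(\C)$ are exactly the partial sums $\bigoplus_{i\in T}M_{n_i}(\C)$, and such an ideal is simple only when $|T|=1$. Hence each $B_k$ equals one of the blocks $M_{n_i}(\C)$, so the graded-simple decomposition of $\mathcal{G}$ coincides with the Artin--Wedderburn decomposition and every $M_{n_i}(\C)$ is a graded ideal, as claimed.

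With this in hand I would finish by induction on $s$, the case $s=1$ being trivial. Writing $A=M_{n_1}(\C)\oplus A'$ with $A'=\bigoplus_{i=2}^s M_{n_i}(\C)$, the claim shows that both $M_{n_1}(\C)$ and $A'$ (a sum of graded ideals) are graded ideals of every connected grading of $A$; Lemma \ref{+free} then yields $\pi_1(A)=\pi_1(M_{n_1}(\C))*\pi_1(A')$. To apply the inductive hypothesis to $A'$ I must check that $A'$ itself satisfies the graded-ideal hypothesis, i.e. that each $M_{n_i}(\C)$ with $i\geq 2$ is a graded ideal of every connected grading of $A'$. Here I would use the free product construction \eqref{coprodgrade}: given a connected grading of $A'$, forming its free product with any connected grading of $M_{n_1}(\C)$ produces, by Lemma \ref{directsum}, a connected grading of $A$ whose restriction to $A'$ is the original one and in which $A'$ is a graded ideal; applying the claim to this grading of $A$ shows each $M_{n_i}(\C)\subseteq A'$ is a graded ideal, hence also a graded ideal of the original grading of $A'$. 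This verifies the hypothesis, so the induction goes through and $\pi_1(A)=\coprod_{i=1}^s\pi_1(M_{n_i}(\C))$.

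I expect the main obstacle to be the bookkeeping in the inductive step, specifically making precise that the free-product grading of $A$ restricts to the given grading of $A'$ and that graded ideals of the large grading meet the summand correctly, so that the CGD property genuinely transfers to the sub-sum $A'$. The identification of simple two-sided ideals with single blocks, by contrast, is routine once the $n_i$ are known to be distinct.
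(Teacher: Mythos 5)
Your proof is correct and is essentially the paper's own argument: the paper's entire proof is the one-line ``by induction on $s$, using Lemma \ref{+free} and the CGD definition,'' and your write-up simply supplies the details (the CGD hypothesis forces every graded-simple summand of Theorem \ref{prop:decsemsi} to be a single Wedderburn block, so each $M_{n_i}(\C)$ is a graded ideal and Lemma \ref{+free} applies, with the free-product construction \eqref{coprodgrade} passing the hypothesis down to $A'$). One minor remark: the distinctness of the $n_i$ plays no role in identifying a simple graded ideal with a single block, since a simple two-sided ideal of a semisimple algebra is automatically a minimal one, i.e.\ a Wedderburn block.
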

\begin{proof}
By induction on $s$, using Lemma \ref{+free} and the CGD definition.
\end{proof}

\end{document}